\numberwithin{equation}{section}
\newtheorem{thm}{Theorem}[section]
\newtheorem*{theorem*}{Theorem}
\newtheorem{lma}[thm]{Lemma}
\newtheorem{cor}[thm]{Corollary}
\newtheorem{defn}[thm]{Definition}
\newtheorem{prop}[thm]{Proposition}
\newtheorem{ques}[thm]{Question}
\newtheorem{claim}[thm]{Claim}
\newcommand{\R}{\mathbb{R}}
\newcommand{\N}{\mathbb{N}}
\newcommand{\Z}{\mathbb{Z}}
\newcommand{\Rd}{\mathbb{R}^d}
\newcommand{\ubd}{\overline{\dim}_{\mathrm{B}}}
\newcommand{\uid}{\overline{\dim}_{\,\theta}}
\newcommand{\asd}{\dim_{\mathrm{A}}}
\newcommand{\asp}{\dim_{\mathrm{A}}^\theta}
\newcommand{\uasp}{\overline{\dim}_\mathrm{A}^\theta}
\newcommand{\qad}{\dim_\mathrm{qA}}
\renewcommand{\epsilon}{\varepsilon}
\renewcommand{\geq}{\geqslant}
\renewcommand{\leq}{\leqslant}
\title{Assouad type dimensions of infinitely generated self-conformal sets}
\author{Amlan Banaji}
\address{Amlan Banaji, Department of Mathematical Sciences, Loughborough University, Loughborough, LE11 3TU, United Kingdom}
\email{A.F.Banaji@lboro.ac.uk}
\author{Jonathan M. Fraser}
\address{Jonathan M. Fraser, School of Mathematics and Statistics, University of St Andrews, St Andrews, KY16 9SS, United Kingdom}
\email{jmf32@st-andrews.ac.uk}
\begin{document}

\date{}

\begin{abstract}
We study the dimension theory of limit sets of iterated function systems consisting of a countably infinite number of conformal contractions. Our focus is on the Assouad type dimensions, which give information about the local structure of sets. Under natural separation conditions, we prove a formula for the Assouad dimension and prove sharp bounds for the Assouad spectrum in terms of the Hausdorff dimension of the limit set and dimensions of the set of fixed points of the contractions. The Assouad spectra of the family of examples which we use to show that the bounds are sharp display interesting behaviour, such as having two phase transitions. Our results apply in particular to sets of real or complex numbers which have continued fraction expansions with restricted entries, and to certain parabolic attractors. \vspace{-.3cm}
\end{abstract}

\keywords{conformal iterated function system, Assouad dimension, Assouad spectrum, continued fractions, parabolic iterated function system \\
\indent \emph{Journal ref.:} Nonlinearity \textbf{37} (2024), 045004}%

\subjclass[2020]{28A80 (Primary), 37B10, 11K50 (Secondary)}%

\maketitle

\tableofcontents

\section{Introduction}

\subsection{Background}

An iterated function system (IFS) is a finite set of contractions $\{ S_i \colon X \to X\}_{i \in I}$, where $X$ is a closed subset of Euclidean space. For each IFS there is an associated limit set (or attractor) $F$ satisfying $F = \cup_{i \in I} S_i(F)$, which will typically be fractal in nature. IFSs and the dimension theory of the associated limit sets have been studied extensively since Hutchinson's important paper~\cite{Hutchinson1981}. In a seminal 1996 paper~\cite{Mauldin1996}, Mauldin and Urbański extended the theory to infinite conformal iterated function systems (CIFSs, defined in Definition~\ref{d:cifs}) consisting of countably many conformal contractions. The maps are assumed to be sufficiently separated, with the contraction ratios uniformly bounded above by some $\xi < 1$. 
One well-studied family of sets which are generated by CIFSs are sets of numbers which have continued fraction expansions with restricted entries. 

There are several notions of fractal dimension which give information about the scaling properties of sets. For background on the Hausdorff and box dimensions, which are defined using global covers of the set, we refer the reader to~\cite{Falconer2014}. In this paper, however, we will focus more on the Assouad type dimensions, which are defined using local covers. The Assouad dimension was introduced in~\cite{Assouad1977thesis} and has been the subject of intensive study in the contexts of embedding theory (see~\cite{Mackay2010,Robinson2011assouad}) and fractal geometry (see~\cite{Fraser2020book}) in recent years. Fraser and Yu~\cite{Fraser2018-2} introduced a parameterised family of dimensions, called the \emph{Assouad spectrum}, which lie between box and Assouad dimension and will be a key focus of this paper. These dimensions are defined in Section~\ref{s:notation}.

The dimension theory of infinite IFSs has been studied extensively, for example in~\cite{Urbanski2022,Chousionis2020,Banaji2021,Mauldin1996,Mauldin1999,Mauldin1995:Fractals1995}.
There are many similarities between finite and infinite iterated function systems, but also several differences. One notable difference is that the Hausdorff, box and Assouad dimensions coincide for the limit set of any finite CIFS, but can differ for infinite CIFSs, indicating that the presence of infinitely many maps can cause the limit set to have greater inhomogeneity. Mauldin and Urbański showed in~\cite[Theorem~3.15]{Mauldin1996} that for an (infinite) CIFS the Hausdorff dimension $\dim_{\mathrm H} F$ can be determined from a topological pressure function (defined in~\eqref{e:MUpressure}). 
For notions of dimension $\dim$ which are not countably stable, a na\"ive guess for a formula for the dimension of the limit set might be $\dim F = \max\{\dim_{\mathrm H} F,\dim \{ S_i(x) \} \}$, where $x$ is any given point in the limit set. Mauldin and Urbański~\cite[Theorem~2.11]{Mauldin1999} proved that the upper box dimension does indeed satisfy this formula. As the main result of~\cite{Banaji2021}, we proved that the upper \emph{intermediate dimensions} $\uid$ also satisfy the na\"ive formula, that is, $\uid F = \max\{\dim_{\mathrm H} F,\uid \{ S_i(x) \} \}$ for all $\theta \in [0,1]$. The intermediate dimensions are a family of dimensions (introduced in~\cite{Falconer2020} and studied further in~\cite{Banaji2022moran,Banaji2021bedford,
Burrell2021-2,Banaji2023gen}) which are parameterised by $\theta \in [0,1]$ and interpolate between the Hausdorff and box dimensions.  
We provided some (often counterintuitive) applications to the dimensions of projections, fractional Brownian images, and general H\"older images. We refer the reader to~\cite{Fraser2021-1} for the general area of `dimension interpolation', which includes both the intermediate dimensions and Assouad spectrum.
One of the most interesting features of the analysis in this paper is that the na\"ive  formula does not always hold for the Assouad spectrum, which instead satisfies two bounds which can be sharp in general. 
In~\cite{Banaji2021}, we also proved bounds for the Hausdorff, box and intermediate dimensions of the limit set without assuming conformality or separation conditions. However, the Assouad dimension can be particularly sensitive to separation conditions even in the case of finite IFSs (see~\cite[Section~7.2]{Fraser2020book}), and in this paper we assume conformality and appropriate separation conditions throughout.

\subsection{Structure of paper and summary of results}

In Section~\ref{s:setting} we introduce notation and define conformal iterated function systems (CIFSs) and their limit sets. We prove geometric estimates for CIFSs which will be useful when proving dimension results later. 
We also define the notions of dimension we will work with, in particular the Hausdorff, upper box and Assouad dimensions (denoted $\dim_{\mathrm H}$, $\ubd$ and $\asd$ respectively), and the Assouad and upper Assouad spectra at $\theta \in (0,1)$ (denoted $\asp$ an $\uasp$ respectively). 

In Section~\ref{s:asd} we prove that under an appropriate separation condition the Assouad dimension of the limit set $F$ satisfies the expected formula 
\[ \asd F = \max\{\dim_{\mathrm H} F, \asd P\},\]
where $P$ is the set of fixed points of the contractions. 

In Section~\ref{s:asp} we prove one of our main results, namely that the Assouad spectrum of the limit set is monotonic in $\theta$ and satisfies the bounds 
\begin{equation}\label{e:introbounds}
 \max\{\dim_{\mathrm H} F,\uasp P \} \leq \asp F \leq \max_{\phi \in [\theta,1]} f(\theta,\phi).  
 \end{equation}
Here, for $\theta \in (0,1)$ and $\phi \in [\theta,1]$, the function $f$ is defined by 
\[ f(\theta,\phi) \coloneqq \frac{(\phi^{-1} - 1) \overline{\dim}_\mathrm{A}^\phi P + (\theta^{-1} - \phi^{-1}) \ubd F}{\theta^{-1} - 1}\] 
and can be thought of as an appropriately weighted average of the Assouad spectrum of $P$ and the upper box dimension of $F$. As with Mauldin and Urbański's proof for the upper box dimension, our proof uses an induction argument to show that the existence of efficient covers at larger scales implies the existence of efficient covers at smaller scales. We also prove several consequences of these bounds, showing for instance that if the Assouad spectrum of the set of fixed points satisfies a form (which we call the three-parameter form) that is often observed, then the Assouad spectrum of the limit set must too. 

In Section~\ref{s:sharp} we provide a family of examples which show that the bounds in~\eqref{e:introbounds} are sharp in general. The Assouad spectra of the family of examples which we use to do so display interesting behaviour not seen previously in dynamically generated sets. In particular, the spectra can have two phase transitions, and need not satisfy the three-parameter form. Figure~\ref{f:sharp} shows the graphs of the Assouad spectra of a selection of these sets. 

Finally, in Section~\ref{s:ctdfracsect} we turn our attention to the Assouad spectra of a well-studied class of infinitely generated self-conformal sets, namely sets of numbers which have continued fraction expansions with restricted entries. Many of the interesting properties of the Assouad spectra witnessed in Section~\ref{s:sharp} also hold in this setting. We calculate a precise formula for the Assouad spectrum of the full complex continued fraction set in terms of its Hausdorff dimension, which has in turn been estimated in~\cite{Mauldin1996,Gardner1983,Priyadarshi2016,Falk2018}. We also investigate parabolic IFSs by applying our methods to the `induced' uniformly contracting CIFS. This in particular allows us to calculate the Assouad spectrum of sets of numbers generated by backwards continued fraction expansions.

\section{Setting and preliminaries}\label{s:setting}

\subsection{Notation and notions of dimension}\label{s:notation}
We denote the natural logarithm by $\log$, the cardinality of a set using $\#$, the (Euclidean) diameter of a subset of $\Rd$ by $|\cdot|$, and $d$-dimensional Lebesgue measure on $\Rd$ by $\mathcal{L}^d$. The symbol $\N$ will denote $\{1,2,3,\ldots\}$. In Sections~\ref{s:sharp} and~\ref{s:ctdfracsect} it will be convenient to write $Y \lesssim Z$ (or $Y \gtrsim Z$) to mean $Y \leq CZ$ (or $Y \geq CZ$, respectively). If $Y \lesssim Z$ and $Y \gtrsim Z$ then we write $Y \approx Z$. We will specify which parameters the implicit constant $C>0$ is allowed to depend on. The symbol $||\cdot||$ will denote either the Euclidean norm on $\Rd$ or the supremum norm of a continuous function, depending on context. 
We write 
\[ B(x,r) \coloneqq \{ \, y \in \Rd : ||y-x|| < r \, \}\ \] 
for the open ball of radius $r>0$ centred at $x \in \Rd$. 
All the sets we consider will be non-empty, bounded subsets of Euclidean space. 
For $F \subseteq \Rd$ let $N_r(F)$ be the smallest number of balls of radius $r$ needed to cover $F$. 
 For $d \in \N$ and $r \geq 1$, we denote by $A_{d,r} \in \N$ the smallest integer such that for all $U \subset \Rd$ there exist $U_1,\ldots,U_{A_{d,r}} \subseteq \Rd$, each of diameter $|U|/r$, such that 
 \begin{equation}\label{e:doublingconst}
 U \subseteq \bigcup_{k=1}^{A_{d,r}} U_k.
 \end{equation}
Let $F$ be non-empty, bounded subset of $\Rd$ with the Euclidean metric. The \emph{Hausdorff dimension} is defined by 
\begin{equation}\label{e:hausdorffdef}
\begin{aligned} \dim_\mathrm{H} F = \inf \{ \, s \geq 0 : &\mbox{ for all } \epsilon >0 \mbox{ there exists a finite or countable cover } \\*
& \{U_1,U_2,\ldots\} \mbox{ of } F \mbox{ such that } \sum_i |U_i|^s \leq \epsilon \,\},
\end{aligned}
\end{equation}%
and the \emph{upper box dimension} is 
\[ \ubd F \coloneqq \limsup_{r \to 0^+} \frac{\log N_r(F)}{-\log r}.\]%
Turning now to notions of dimension which describe the local structure of sets, the \emph{Assouad dimension} is defined by 
\begin{multline}\label{e:assouaddef}
      \dim_\mathrm{A} F = \inf\left\{ \, 
      \alpha : \mbox{ there exists }C>0\mbox{ such that for all } x \in F \mbox{ and } 
    \right. \\* 
    \qquad \left. 0<r<R, \mbox{ we have } N_r(B(x,R)\cap F) \leq C(R/r)^\alpha \, \right\}. 
    \end{multline}
For $\theta \in (0,1)$, the \emph{Assouad spectrum} of $F$ at $\theta$ is defined by fixing the scales $R = r^\theta$: 
   \begin{multline*}
      \asp F = \inf\left\{ \, 
      \alpha : \mbox{ there exists }C>0\mbox{ such that for all } x \in F \mbox{ and } 
    \right. \\* 
    \qquad \left. 0< R \leq 1, \mbox{ we have } N_{R^{1/\theta}}(B(x,R)\cap F) \leq C R^{\alpha(1 - 1/\theta)} \, \right\}. 
    \end{multline*}
    Clearly $\ubd F \leq \asp F \leq \asd F$ for all $\theta \in (0,1)$. 
    The Assouad spectrum is not always monotonic in $\theta$ (see~\cite{Fraser2018-2}). The \emph{upper Assouad spectrum} at $\theta$, however, \emph{is} monotonic, and is defined by 
      \begin{multline*}
      \uasp F = \inf\left\{\, 
      \alpha : \mbox{ there exists }C>0\mbox{ such that for all } x \in F \mbox{ and } 
    \right. \\* 
    \qquad \left. 0<r\leq R^{1/\theta} \leq R \leq 1, \mbox{ we have } N_r(B(x,R)\cap F) \leq C(R/r)^\alpha \, \right\}. 
    \end{multline*}
    The Assouad spectrum was introduced in~\cite{Fraser2018-2} and has been calculated for various families of fractals in~\cite{Fraser2018assouadfamilies,FraserPreprintsullivan,Burrell2020-1} and other works. Recently, Rutar~\cite{Rutar2022assouad} has given a complete description of the attainable forms of Assouad spectra of sets, showing that a wide variety of behaviour is possible in general. 
    The Assouad spectrum can be used to give information about dimensions of orthogonal projections of sets~\cite{Falconer2021projections}, and has applications related to spherical maximal functions~\cite{Roos2020assouadspectrum} and conformal geometry~\cite{Garitsis2022conformal}. 
The \emph{quasi-Assouad dimension}, introduced in~\cite{Lu2016}, can be defined by  
\begin{equation}\label{e:quasiassouaddef}
 \qad F \coloneqq \lim_{\theta \to 1^-} \asp F,
 \end{equation}
or equivalently $\qad F \coloneqq \lim_{\theta \to 1^-} \uasp F$ (see~\cite[Corollary~3.3.7]{Fraser2020book}). 
 We always have 
 \[ \dim_{\mathrm H} F \leq \ubd F \leq \asp F \leq \uasp F \leq \qad F \leq \asd F,\] 
  and all inequalities can be strict in general. 
  We sometimes write $\dim_{\mathrm A}^1$ or $\overline{\dim}_{\mathrm A}^1$ to mean the quasi-Assouad dimension, and since $\uasp F \to \ubd F$ as $\theta \to 0^+$, we sometimes write $\dim_{\mathrm A}^{0} F$ or $\overline{\dim}_{\mathrm A}^0 F$ to mean the upper box dimension of $F$. 
  The Assouad spectrum and upper Assouad spectrum are continuous in $\theta \in (0,1)$, see~\cite{Fraser2018-2,Fraser2019-3}. In~\cite{Fraser2019-3}, Fraser et al. show that we always have $\overline{\dim}_\mathrm{A}^\theta F = \sup_{\theta' \in (0,\theta]} \dim_\mathrm{A}^{\theta'} F$. Therefore if $\theta \mapsto \asp F$ is monotonic (as holds for many sets of interest, see Lemma~\ref{l:monotoniclemma}) then we can use the Assouad spectrum and upper Assouad spectrum interchangeably. 
For a set $F \subseteq \Rd$ we define the quantity 
\begin{equation}\label{e:definephasetransition}
 \rho = \rho_F \coloneqq \inf \{ \, \theta \in [0,1] : \asp F = \qad F \, \} = \inf \{ \, \theta \in [0,1] : \uasp F = \qad F \, \}, 
 \end{equation}
 which will often represent a phase transition in the Assouad spectrum of $F$. 
The two expressions are equal by~\cite[Corollary~3.3.3]{Fraser2020book}.

\subsection{Conformal iterated function systems}
We follow the setup of Mauldin and Urbański~\cite{Mauldin1996,Mauldin1999}, for which we need to introduce some notation. %
Given a countable index set $I$, define $I_0 \coloneqq \{\varnothing\}$ and $I^* \coloneqq \bigcup_{i=1}^\infty I^n$. We call elements of $I^*$ \emph{finite words} and elements of $I^\N$ \emph{infinite words}. We usually denote words by the letter $w$, and we write $w = i_1 \cdots i_n$ and $w=i_1i_2\ldots$ instead of $w= (i_1,\ldots,i_n)$ and $w=(i_1,i_2,\ldots)$ respectively. We say that a word in $I^n$ has \emph{length} $n$, and an infinite word has \emph{length} $\infty$. If $w \in I^* \cup I^\N$ and $n \in \N$ does not exceed the length of $w$ then we write $w|_n \coloneqq w_1 \ldots w_n \in I^n$, and $w|_0 \coloneqq \varnothing$. If $w \in I_0 \cup I^* \cup I^\N$ and $v \in I_0 \cup I^*$ then we say that $v$ is a \emph{prefix} of $w$ if there exists $n \in \{0,1,2,\ldots\}$ such that $v = w|_n$.

In the following definition, the assumption that the contractions are conformal maps (meaning that they locally preserve angles) is especially important. In one dimension, conformal maps are functions with a non-vanishing H\"older continuous derivative, in two dimensions they are holomorphic functions with non-vanishing derivative, and in three dimensions they are M\"obius maps (by Liouville's theorem). %
\begin{defn}\label{d:cifs}
Let $d \in \N$ and let $X$ be a compact, connected subset of $\Rd$, equipped with the Euclidean metric. Consider a collection of maps $S_i \colon X \to X$, $i \in I$, where $I$ is a countable index set. This system forms a \emph{conformal iterated function system (CIFS)} if the following additional properties are satisfied: %
\begin{enumerate}

\item\label{i:osc} (Open Set Condition (OSC)) The set $X$ has non-empty interior $U \coloneqq \mathrm{Int}_{\mathbb{R}^d} X$, and $S_i(U) \subset U$ for all $i \in I$, and $S_i(U) \cap S_j(U) = \varnothing$ for all $i,j \in I$ with $i \neq j$. 

\item\label{i:cone} (Cone condition) $\inf_{x \in X} \inf_{r \in (0,1)} \mathcal{L}^d (B(x,r) \cap \mathrm{Int}_{\Rd} X)/r^d > 0$. 

\item\label{i:conformal} (Conformality) There exists an open, bounded, connected subset $V \subset \Rd$ such that $X \subset V$ and such that for each $i \in I$, $S_i$ extends to a $C^{1+\epsilon}$ diffeomorphism $\overline{S_i}$ from $V$ to an open subset of $V$. Moreover, $\overline{S_i}$ is \emph{conformal} on $V$, which means that for all $x \in V$ the differential $\overline{S_i}'|_x$ exists, is non-zero, is a similarity map (so $||\overline{S_i}'|_x (y)|| = ||\overline{S_i}'|_x||\cdot||y||$ for all $y \in \Rd$), and is $\epsilon$-H{\"o}lder continuous in $x$. Furthermore, there exists $\xi \in (0,1)$ such that $||\overline{S_i}'|| <\xi$ for all $i \in I$, where $||\cdot||$ is the supremum norm over $V$. 

\item\label{i:bdp} (Bounded Distortion Property (BDP)) There exists $K>0$ such that $||S_w'|_y|| \leq K||S_w'|_x||$ for all $x,y \in V$ and $w \in I^*$. %

\end{enumerate}
\end{defn}

For $w \in I^n$ we define 
\[ S_w \coloneqq S_{w_1} \circ \cdots \circ S_{w_n},\]  
and we define $S_\varnothing$ to be the identity function on $X$. 
Since $|S_{w|_n}(X)| \leq \xi^n |X|$ by the uniform contractivity, the map 
\[ \pi \colon I^\N \to X, \qquad \pi(w) \coloneqq \bigcap_{n=1}^\infty S_{w|_n}(X) \]
is well-defined and continuous. 
We are interested in the following set, which will typically be fractal in nature. 
\begin{defn}
The \emph{limit set} or \emph{attractor} of a CIFS is defined by
\[ F \coloneqq \pi(I^\N) = \bigcup_{w \in I^\N} \bigcap_{n=1}^\infty S_{w|_n}(X). \]
\end{defn}
For $w \in I^n$ define $F_w = F_{S_w} \coloneqq S_w(F)$ and $X_w = X_{S_w} \coloneqq S_w(X)$. 
Now, $F$ is clearly non-empty and satisfies the relation 
\begin{equation}\label{e:attractor} F = \bigcup_{i \in I} F_i. 
\end{equation}
In fact there are many sets which satisfy~\eqref{e:attractor}, and $F$ is the largest of these by inclusion. If $I$ is finite then $F$ is compact (and is indeed the only non-empty compact set which satisfies~\eqref{e:attractor} by Hutchinson's application of the Banach contraction mapping theorem~\cite{Hutchinson1981}), but if $I$ is infinite then $F$ will not generally be closed. %
For $w \in I_n$ define 
\begin{align}\label{e:definerw}
\begin{split}
r_w &= r_{S_w} \coloneqq \inf_{x,y \in X, x \neq y} \frac{||S_w(x)-S_w(y)||}{||x-y||}; \\
R_w &= R_{S_w} \coloneqq \sup_{x,y \in X, x \neq y} \frac{||S_w(x)-S_w(y)||}{||x-y||},
\end{split}
\end{align}
noting that $0 \leq r_w \leq R_w \leq \xi$. The value $R_w$ is the smallest possible Lipschitz constant for $S_w$. 
Mauldin and Urbański~\cite{Mauldin1996} introduced 
\[ \psi_n(t) \coloneqq \sum_{w \in I^n} ||S_w'||^t \]
for $n \in \N$, $t \in (0,\infty)$, and defined the topological pressure function $\overline{P} \colon (0,\infty) \to [-\infty,\infty]$ by %
\begin{equation}\label{e:MUpressure} \overline{P}(t) \coloneqq \lim_{n \to \infty} \frac{1}{n} \log \psi_n (t). 
\end{equation}
Mauldin and Urbański~\cite[Theorem~3.15]{Mauldin1996} proved that the Hausdorff dimension of the limit set is given by 
\begin{equation}\label{e:hausdorffpressure}
 h \coloneqq \dim_{\mathrm H} F = \inf\{ \, t \geq 0 : \overline{P}(t) \leq 0 \, \}.
 \end{equation}
 Throughout the paper, we reserve the letter $h$ for the quantity in~\eqref{e:hausdorffpressure}. 
 In fact, this formula holds even if the cone condition~\eqref{i:cone} is not assumed, see~\cite[Theorem~19.6.4]{Urbanski2022}. 
 However, the cone condition is used in the proof of~\cite[Lemma~2.11]{Banaji2021}, which is in turn used in the proofs of Theorem~\ref{t:mainasp} below. The condition will in particular be satisfied if $X$ is convex or has smooth boundary. 

 Since the pressure function is decreasing, the \emph{finiteness parameter} of the system 
 \begin{equation}\label{e:finiteness}
  \Theta \coloneqq \inf\{ \, t > 0 : \overline{P}(t) < \infty \, \} \in [0, \infty] 
  \end{equation}
  is always a lower bound for the Hausdorff dimension.

 We will use many properties of CIFSs. It is well known (see~\cite[Lemmas~2.9 and~2.12]{Banaji2021}, for example) that for any CIFS there exists $D \geq 1$ such that for all $w \in I^*$,  
\begin{equation}\label{e:diameterslemma}
 D^{-1} ||S_w'|| \leq r_w \leq R_w \leq D||S_w'||;
\end{equation}
\begin{equation}\label{e:2nddiams}
D^{-1} ||S_w'|| \leq |F_w| \leq |X_w| \leq D||S_w'||. 
\end{equation}
In~\cite[Lemma~2.11]{Banaji2021} we proved that 
there exists $M \in \N$ such that for all $z \in \Rd$ and $r>0$, if $w_1,\ldots,w_l$ are distinct words in $I^*$ such that for all $i,j \in \{1,\ldots,l\}$, $w_i$ is not a prefix of $w_j$, and 
\begin{equation}\label{e:mdefn} B(z,r) \cap S_{w_i}(X) \neq \varnothing \mbox{ and } |S_{w_i}(X)| \geq r/2 \mbox{ for all } i \in \{1,\ldots,l\}, 
\end{equation}
 then $l \leq M$. %
We will use the following consequences of conformality of the maps when proving results about the Assouad-type dimensions. 

\begin{lma}\label{l:finitelymanylarge}
For any CIFS and any $\lambda > 0$ there are only finitely many $w \in I^*$ with $||S_w'|| \geq \lambda$. 
\end{lma}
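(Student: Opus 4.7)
The plan is to combine two observations: (i) the uniform contraction ratio $\xi<1$ forces $\|S_w'\|$ to decay exponentially in the length of $w$, so only finitely many word-lengths can contribute, and (ii) at each fixed length the open set condition together with bounded distortion makes only finitely many words satisfy $\|S_w'\|\ge\lambda$.

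For (i), I would first observe that for any $w = w_1\cdots w_n \in I^n$ and any $x\in V$, conformality gives that $\overline{S_w}'|_x$ is the composition of the similarity differentials $\overline{S_{w_k}}'|_{y_k}$ (with $y_k = \overline{S_{w_{k+1}\cdots w_n}}(x)$), so its operator norm is the product $\prod_{k=1}^n \|\overline{S_{w_k}}'|_{y_k}\|$. Each factor is at most $\|\overline{S_{w_k}}'\| < \xi$, so $\|S_w'\| \le \xi^n$. Consequently, if $\|S_w'\|\ge\lambda$ then $n \le \log\lambda/\log\xi$, so the set of relevant lengths is finite.

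For (ii), I would fix $n\in\N$ and estimate $\sum_{w\in I^n}\|S_w'\|^d$ using the OSC. Since $\overline{S_w}$ is conformal and the differential at every point is a similarity of $\R^d$, its Jacobian determinant at $x$ has absolute value $\|S_w'|_x\|^d$; combined with the bounded distortion property~\eqref{e:diameterslemma}, this yields
\[
\mathcal{L}^d(S_w(U)) \;=\; \int_U \|S_w'|_x\|^d \, d\mathcal{L}^d(x) \;\ge\; K^{-d}\,\mathcal{L}^d(U)\,\|S_w'\|^d.
\]
By the OSC the sets $\{S_w(U)\}_{w\in I^n}$ are pairwise disjoint subsets of $U$, and $\mathcal{L}^d(U)$ is finite and strictly positive (by the cone condition, or simply by boundedness and non-empty interior). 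Summing gives
\[
\sum_{w\in I^n} \|S_w'\|^d \;\le\; \frac{K^d}{\mathcal{L}^d(U)}\sum_{w\in I^n}\mathcal{L}^d(S_w(U)) \;\le\; \frac{K^d}{\mathcal{L}^d(U)} < \infty.
\]
In particular, $\#\{w\in I^n : \|S_w'\|\ge\lambda\} \le K^d\,\lambda^{-d}/\mathcal{L}^d(U)$ is finite for each~$n$.

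Combining the two steps, the total number of $w \in I^*$ with $\|S_w'\|\ge\lambda$ is a finite sum (over at most $\lfloor\log\lambda/\log\xi\rfloor$ word-lengths) of finite quantities, hence finite. There is no real obstacle here; the only mild subtlety is making sure the multiplicativity $\|S_w'|_x\| = \prod_k \|S_{w_k}'|_{y_k}\|$ is justified from the fact that each differential is a similarity (for a general linear map one would only get a submultiplicative inequality, but that suffices here anyway).
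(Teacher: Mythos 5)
Your proof is correct and follows essentially the same route as the paper's: bound the admissible word length via $\|S_w'\|\le\xi^n$, then use the OSC together with the bounded distortion property in a Lebesgue-measure counting argument to show each level contributes only finitely many words (the paper packs disjoint balls of radius comparable to $\lambda$ inside the images $S_w(U)$, whereas you integrate the Jacobian, but this is the same volume argument). The only blemish is your final constant: since $\sum_{w\in I^n}\mathcal{L}^d(S_w(U))\le\mathcal{L}^d(U)$ rather than $1$, the displayed bound should read $\sum_{w\in I^n}\|S_w'\|^d\le K^d$, which of course does not affect the finiteness conclusion.
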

\begin{proof}
Let $D$ be as in~\eqref{e:diameterslemma},~\eqref{e:2nddiams}. Note that there exists a ball $B(x,r) \subset \mathrm{Int}_{\Rd}(X)$. For $n \in \{1,\ldots,N-1\}$ let $S_n \coloneqq \{ \, w \in I^n : ||S_w'|| \geq \lambda \, \}$. For each $w \in L_n$, $S_w(\mathrm{Int}_{\Rd}(X))$ contains a ball $B_w$ of radius $\lambda r / D$ by~\eqref{e:diameterslemma}. Moreover, if $w, v \in L_n$ are distinct then by the OSC, $B_w \cap B_v = \varnothing$. 
Therefore by a Lebesgue measure argument, $\# L_n < \infty$. But if $N \in \N$ is large enough that $\xi^N < \lambda$, then $\# L_n = 0$ for all $n \geq N$. Thus $\# \{ \, w \in I^* : ||S_w'|| \geq \lambda \, \} = \sum_{n=1}^\infty \# L_n < \infty$. 
\end{proof}

\begin{lma}\label{l:assouadgeo}
There exists $D' > 0$ such that for all $Y \subseteq X$, $w \in I^*$ and $r>0$,  
\[ (D')^{-1} N_r(Y) \leq N_{||S_w'||r}(S_w(Y)) \leq D' N_r(Y);\] 
\[ (D')^{-1} N_r(Y) \leq N_{|X_w|r}(S_w(Y)) \leq  N_{|F_w|r}(S_w(Y)) \leq D' N_r(Y).\] 
\end{lma}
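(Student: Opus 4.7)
The plan is to combine two ingredients. First, by~\eqref{e:definerw} and~\eqref{e:diameterslemma}, the restriction $S_w\colon X \to X_w$ is bi-Lipschitz with upper Lipschitz constant $R_w \leq D\|S_w'\|$ and lower Lipschitz constant $r_w \geq D^{-1}\|S_w'\|$. Second, by~\eqref{e:doublingconst} the space $\R^d$ is doubling, so covers at two comparable scales differ by only a multiplicative constant depending on $d$ and the ratio of the scales.

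I would first establish the first pair of inequalities. For the upper bound $N_{\|S_w'\|r}(S_w(Y)) \leq D' N_r(Y)$, take an optimal cover of $Y$ by balls $B(x_i,r)$, $i = 1, \dots, N_r(Y)$. By the upper Lipschitz bound each image $S_w(B(x_i,r)\cap X)$ has diameter at most $2D\|S_w'\|r$, and by~\eqref{e:doublingconst} can therefore be covered by $A_{d,2D}$ sets of diameter $\|S_w'\|r$, each contained in a ball of radius $\|S_w'\|r$. For the lower bound, take an optimal cover $\{B(y_j,\|S_w'\|r)\}$ of $S_w(Y)$; since $S_w$ is a diffeomorphism on $V \supset X$, one has $Y \subseteq \bigcup_j S_w^{-1}(B(y_j,\|S_w'\|r)) \cap X$, and the lower Lipschitz bound $\|S_w(a)-S_w(b)\| \geq D^{-1}\|S_w'\|\,\|a-b\|$ shows each such preimage has diameter at most $2Dr$, which by~\eqref{e:doublingconst} can in turn be covered by $A_{d,2D}$ balls of radius $r$.

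For the second chain, the middle inequality is immediate from $|F_w| \leq |X_w|$ and monotonicity of $r \mapsto N_r(\cdot)$. The outer two inequalities reduce to the first pair via~\eqref{e:2nddiams}, which says $|X_w|$ and $|F_w|$ lie in the interval $[D^{-1}\|S_w'\|,\,D\|S_w'\|]$; a final application of~\eqref{e:doublingconst} converts between the cover radius $\|S_w'\|r$ and the cover radii $|X_w|r$ or $|F_w|r$, at the cost of one further constant factor that can be absorbed into $D'$.

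I do not expect a genuine obstacle; the only care required is to apply the bi-Lipschitz inequalities in the correct direction at each step, and to choose $D' = D'(d,D)$ (independent of $w \in I^*$, $Y \subseteq X$ and $r > 0$) large enough to absorb the finitely many doubling constants that appear.
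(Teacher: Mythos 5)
Your proposal is correct and follows essentially the same route as the paper: push an optimal cover forward (resp.\ pull one back) using the bi-Lipschitz bounds $D^{-1}\|S_w'\| \leq r_w \leq R_w \leq D\|S_w'\|$ from~\eqref{e:diameterslemma}, then repair the resulting factor-$2D$ scale discrepancy with the doubling constant $A_{d,2D}$ from~\eqref{e:doublingconst}, and finally deduce the second chain from~\eqref{e:2nddiams} by enlarging $D'$. The paper's proof makes exactly these choices (taking $D' = A_{d,2D}$ initially), so no further comment is needed.
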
%

\begin{proof}
Let $D'$ be the constant $A_{d,2D}$ from~\eqref{e:doublingconst}. Let $Y \subset X$, $w \in I^*$ and $r>0$. For the upper bound note that there exist balls $B_1,\ldots,B_{N_r(Y)}$ of radius $r$ which cover $Y$. Assume without loss of generality that each of these balls intersects $Y$. Then by the upper bound of~\eqref{e:diameterslemma}, for each $j$, 
 \[ |S_w(B_j \cap X)| \leq D||S_w'||\cdot |B_j \cap X| \leq D||S_w'||\cdot |B_j| = D||S_w'||(2r).\] %
The $B_j$ cover $Y$ so $\{S_w(B_j \cap X)\}_j$ covers $S_w(Y)$. For each $j$ we can cover $S_w(B_j \cap X)$ by $D'$ balls of radius $||S_w'||r$, so $N_{||S_w'||r}(S_w(Y)) \leq D' N_r(Y)$. 

For the lower bound, note that there exist balls $B_1',\ldots,B_{N_{||S_w'||r}(S_w(Y))}'$ of radius $||S_w'||r$ which cover $S_w(Y)$, each intersecting $S_w(Y)$. By the lower bound of~\eqref{e:diameterslemma}, for each $k$,  
\begin{align*} 
|S_w^{-1}(B_k' \cap S_w(Y))| &\leq D||S_w'||^{-1} \cdot |B_k' \cap S_w(Y)| \\
&\leq D||S_w'||^{-1} \cdot |B_k'| \\
&= D||S_w'||^{-1} (2||S_w'||r) \\
&= 2Dr. 
\end{align*}
The $\{S_w^{-1}(B_k' \cap S_w(Y))\}_k$ cover $Y$. We can cover each of these sets by $D'$ balls of radius $r$, so $N_r(Y) \leq D' N_{||S_w'||r}(S_w(Y))$, proving the lower bound. In light of~\eqref{e:2nddiams} we may increase $D'$ further to ensure that the second string of inequalities also holds. 
\end{proof}

The following lemma is similar to~\cite[Proposition~2.9]{Mauldin1999} and~\cite[Lemma~3.3]{Banaji2021}. Recall the notation from~\eqref{e:doublingconst}, and recall that $M$ is defined in~\eqref{e:mdefn}. 
\begin{lma}\label{l:samewithinlevelasd}
Consider a CIFS and fix $n \in \N$. If $P$ and $Q$ are both subsets of $\cup_i S_i(X)$ which intersect $S_w(X)$ in exactly one point for each $w \in I^n$ then for all $x \in X$ and $R,r>0$,  
\begin{equation}\label{e:samewithinbound}
 N_r(P \cap B(x,R)) \leq (A_{d,6} + M) N_r(Q \cap B(x,R)). 
 \end{equation}
Moreover, $\asd P = \asd Q$, and $\asp P = \asp Q$ and $\uasp P = \uasp Q$ for all $\theta \in (0,1)$. 
\end{lma}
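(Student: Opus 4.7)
The strategy is to prove the covering-number inequality~\eqref{e:samewithinbound}; the three dimension equalities $\asd P = \asd Q$, $\asp P = \asp Q$, and $\uasp P = \uasp Q$ follow at once, since \eqref{e:samewithinbound} together with its symmetric counterpart (obtained by swapping the roles of $P$ and $Q$) shows that $N_r(P \cap B(x,R))$ and $N_r(Q \cap B(x,R))$ agree up to the multiplicative constant $A_{d,6}+M$, which is absorbed into the constant $C$ in the definitions of $\asd$, $\asp$ and $\uasp$.

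To establish~\eqref{e:samewithinbound}, I fix $x \in X$ and $R,r>0$ and take an essentially optimal cover $\{B(y_j,r)\}_{j=1}^N$ of $Q \cap B(x,R)$ by $N \coloneqq N_r(Q \cap B(x,R))$ balls of radius $r$. The plan is to split $\{w \in I^n : p_w \in B(x,R)\}$ into a small-cylinder class ($|S_w(X)| \le r/2$) and a large-cylinder class ($|S_w(X)| > r/2$) and cover each part separately. For a small cylinder, the fact that both $p_w$ and $q_w$ lie in $S_w(X)$ gives $|p_w - q_w| \le r/2$, so $p_w$ lies within distance $3r/2$ of the centre $y_j$ of any cover ball containing $q_w$. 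The ball $B(y_j,3r/2)$ has diameter $3r$ and, by the definition of the doubling constant in~\eqref{e:doublingconst}, can be covered by $A_{d,6}$ sets of diameter $r/2$, each contained in a ball of radius $r$; this contributes at most $A_{d,6}\cdot N$ balls of radius $r$. For a large cylinder, the separation lemma~\eqref{e:mdefn} (applied to each $B(y_j,r)$, valid because distinct words in $I^n$ are pairwise non-prefix) ensures that at most $M$ words $w \in I^n$ satisfy $B(y_j,r) \cap S_w(X) \ne \varnothing$ and $|S_w(X)| \ge r/2$; assigning a single $r$-ball centred at each such $p_w$ adds a further $M\cdot N$ balls, giving the claimed total.

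The main obstacle is the boundary case, where $p_w \in B(x,R)$ but $q_w \notin B(x,R)$, since then $q_w$ is not captured by the cover directly. For large cylinders this is still handled by the separation lemma, applied to the ball $B(p_w,r)$ itself (which contains $p_w$ and meets $S_w(X)$ there, with $|S_w(X)| \ge r/2$), giving at most $M$ such indices per cover ball after a suitable reindexing. For small cylinders, $|p_w - q_w| \le r/2$ forces $p_w$ within $r/2$ of $\partial B(x,R)$ and $q_w$ into $B(x,R+r/2)\setminus B(x,R)$; to handle these I would either work with the slightly enlarged cover $N_r(Q \cap B(x,R+r/2))$ and absorb the enlargement into the multiplicative constant using the doubling property, or bound the boundary indices directly via the separation lemma applied to $r$-balls centred just inside $\partial B(x,R)$. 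In either case, the bookkeeping can be arranged so that the combined bound $(A_{d,6}+M)\,N_r(Q \cap B(x,R))$ survives, at which point the three dimension equalities are immediate.
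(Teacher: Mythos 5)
Your core argument is the same as the paper's: split the level-$n$ cylinders meeting $B(x,R)$ into small and large ones, cover the $P$-points of small cylinders by inflating the cover balls of $Q$ and paying the doubling constant $A_{d,6}$, and charge the large cylinders to the cover balls via \eqref{e:mdefn} at cost $M$ per ball. Your direct appeal to \eqref{e:mdefn} for words in $I^n$ (using that distinct words of equal length are pairwise non-prefix) is a legitimate substitute for the paper's reduction to $n=1$, and your observation that the symmetric inequality yields the three dimension equalities is exactly how the lemma is used.

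The one place you diverge is the boundary discussion, and here your instincts are better than your proposed repairs. Your second fix cannot work: \eqref{e:mdefn} gives no control whatsoever over cylinders of diameter less than $r/2$, so it cannot count the small cylinders straddling $\partial B(x,R)$ with $p_w$ inside and $q_w$ outside. Your first fix (compare against $N_r(Q\cap B(x,R+r/2))$, or more safely $N_r(Q\cap B(x,2R))$) is the right move, but your claim that ``the bookkeeping can be arranged so that the combined bound $(A_{d,6}+M)N_r(Q\cap B(x,R))$ survives'' is not justified and should not be expected to be: for $d\geq 2$ one can arrange $\sim (R/r)^{d-1}$ disjoint cylinders of diameter comparable to $r$ straddling the sphere $\partial B(x,R)$, with each $p_w$ just inside and each $q_w$ just outside, so $N_r(P\cap B(x,R))$ is large while $N_r(Q\cap B(x,R))$ can be $1$; no constant rescues the literal inequality with the same ball on both sides, and an annulus of width $r$ can contain $\sim(R/r)^{d-1}$ well-separated points of $Q$, so the enlargement is not absorbed into a multiplicative constant either. (The paper's own proof quietly sidesteps this by covering all of $Q$ rather than $Q\cap B(x,R)$.) The honest statement your argument proves is $N_r(P\cap B(x,R))\leq C\,N_r(Q\cap B(x,2R))$ for an absolute constant $C$ — when doing so, note that large cylinders of diameter at least $R$ must be treated separately (there are at most $M$ of them by \eqref{e:mdefn} applied at scale $R$), since their $Q$-point may escape even $B(x,2R)$. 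This doubled-radius inequality is enough for $\asd$, $\asp$ and $\uasp$, since the factor $2$ in $R$ is absorbed into the constants in their definitions, so the conclusions of the lemma all stand.
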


\begin{proof}
The set of maps corresponding to words of length $n$ forms another CIFS, so we may assume without loss of generality that $n=1$. 
Suppose $x_1,\ldots,x_N \in X$ are such that the balls $B(x_1,r),\ldots,B(x_N,r)$ cover $Q$. For each $j=1,\ldots,N$ let $y_{j,1},\ldots,y_{j,A_{d,6}} \in X$ be such that 
\[ B(x_j,3r) \subseteq \bigcup_{l=1}^{A_{d,6}} B(y_{j,l},r).\]
 Then this union covers $S_i(X)$ for each $i \in I$ such that $|S_i(X)| < r$ and $S_i(X) \cap B(x_j,r) \neq \varnothing$. By~\cite[Lemma~2.11]{Banaji2021} there exist $i_1, \ldots, i_M \in I$, depending on $j$ and not necessarily distinct, such that $S_{i_k}(X) \cap B(x_j,r) \neq \varnothing$ for $k=1,\ldots,M$, and such that if $i \in I \setminus \{i_1,\ldots,i_M\}$ and $|S_i(X)| \geq r$ then $S_i(X) \cap  B(x_j,r) = \varnothing$. If $k=1,\ldots,M$ then we can cover the single element of $P \cap S_{i_k}$ by a ball $B_{j,k}$ of radius $r$. Since $\{B(x_j,r)\}_j$ covers $Q$, 
\[P \subseteq \bigcup_j \left( \bigcup_{l=1}^{A_{d,6}} B(y_{j,l},r) \cup \bigcup_{k=1}^M B_{j.k} \right),\]
proving~\eqref{e:samewithinbound}. It follows immediately that $\asd P = \asd Q$ and $\asp P = \asp Q$ for all $\theta \in (0,1)$. Since $\overline{\dim}_\mathrm{A}^\theta F = \sup_{\phi \in (0,\theta]} \dim_\mathrm{A}^{\phi} F$ for all $F \subseteq \mathbb{R}^d$ (see~\cite{Fraser2019-3}), it also follows that $\uasp P = \uasp Q$. %
\end{proof}

\section{Assouad dimension of the limit set}\label{s:asd}

Theorem~\ref{t:asd} gives a simple formula for the Assouad dimension of the limit set under an additional separation condition, which we assume in this section only. As discussed in Section~\ref{s:further}, we do not know whether the additional assumption can be removed. Recall that $h \coloneqq \dim_{\mathrm H} F$. 

\begin{thm}\label{t:asd}
Consider a CIFS with notation as in Definition~\ref{d:cifs} and assume that $\overline{S_i}(V) \cap \overline{S_j}(V) = \varnothing$ for all distinct $i,j \in I$. Let $P$ be a subset of $\cup_i S_i(X)$ intersecting $S_i(X)$ in exactly one point for each $i$. Then the limit set $F$ satisfies 
\[ \asd F = \max\{h,\asd P\}.\]
\end{thm}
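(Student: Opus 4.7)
The lower bound $\asd F \ge \max\{h, \asd P\}$ is essentially immediate. First, $h = \dim_{\mathrm H} F \le \asd F$ always. Second, each fixed point $p_i$ of $S_i$ lies in $S_i(X)$ and equals $\pi(iii\ldots) \in F$; thus the set $Q := \{p_i : i \in I\}$ is a subset of $F$ with exactly one point in each $S_i(X)$, so by Lemma~\ref{l:samewithinlevelasd} and the monotonicity of Assouad dimension, $\asd P = \asd Q \le \asd F$.

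For the upper bound, fix $\alpha > \max\{h, \asd P\}$; the goal is to exhibit $C > 0$ such that $N_r(F \cap B(x,R)) \le C(R/r)^\alpha$ for all $x \in F$ and $0 < r < R$. Three inputs are immediately available: (a) since $\alpha > h$, the pressure formula~\eqref{e:hausdorffpressure} gives $\overline{P}(\alpha) < 0$, so $\psi_n(\alpha)$ decays exponentially and $\sum_{w \in I^*} \|S_w'\|^\alpha < \infty$; (b) by Mauldin and Urba\'nski's formula~\cite{Mauldin1999}, $\ubd F = \max\{h, \ubd P\} \le \max\{h, \asd P\} < \alpha$, giving $N_r(F) \le C_B r^{-\alpha}$ uniformly; (c) from $\alpha > \asd P$ together with Euclidean doubling, $N_r(P \cap B(y, R')) \le C_P (R'/r)^\alpha$ for all $y \in \Rd$ and $0 < r < R'$.

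The separation hypothesis is the tool used to tie these inputs together. By conformality, bounded distortion, and the fact that $X$ sits in the interior of $V$, the assumption $\overline{S_i}(V) \cap \overline{S_j}(V) = \varnothing$ gives a collar: for every $u \in I^*$ and $i \neq j$, $\overline{S_{ui}}(X)$ and $\overline{S_{uj}}(V)$ are separated by a distance comparable to $\|S_u'\|$ times a positive constant built from $d(X, \partial V)$ and $d(\overline{S_i}(V), \overline{S_j}(V))$. Consequently, for a point $x \in F$ of address $v \in I^{\mathbb{N}}$ and a suitable prefix $u = v|_n$ chosen maximally with $\|S_u'\|$ at least a fixed multiple of $R$, the intersection $B(x,R) \cap F$ sits inside $S_u(F)$, and Lemma~\ref{l:assouadgeo} converts the original estimate into a rescaled one: $N_r(B(x,R) \cap F) \le D' \, N_{r/\|S_u'\|}(F \cap B')$, where $B'$ is a preimage ball in $X$ of diameter comparable to $R/\|S_u'\|$.

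To close the estimate, I would run a Mauldin--Urba\'nski-style stopping-time decomposition inside this rescaled problem. Writing $\mathcal{W}_r$ for the stopping words at scale $r$, one has $N_r(F \cap B(x,R)) \le |\{w \in \mathcal{W}_r : S_w(X) \cap B(x,R) \neq \varnothing\}|$, and grouping by parent $u$ with $|S_u(X)| > r$ reveals that the relevant children of $u$ have their $S_w(X)$ containing the translated fixed points $S_u(p_i) \in S_u(P)$. Covering each group by balls centred at these fixed points, then applying Lemma~\ref{l:assouadgeo} together with input (c), yields a per-parent bound of the form $\lesssim \min\{(R/r)^\alpha, (\|S_u'\|/r)^\alpha\}$. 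Parents with $|S_u(X)|$ comparable to or larger than $R$ are only finitely many by~\eqref{e:mdefn} and contribute $\lesssim (R/r)^\alpha$ each; parents with $|S_u(X)| \le R$ are summed using the pressure-convergent series from (a), the $B(x,R)$-locality translating via the collar argument into the correct $(R/r)^\alpha$ scaling. The main obstacle is exactly this last balancing: ensuring that after iteration the sum over small parents is controlled by $(R/r)^\alpha$ rather than the cruder $r^{-\alpha}$ that the global upper-box bound (b) would yield, and it is here that the separation condition, the Assouad bound on $P$, and the pressure decay must interact delicately.
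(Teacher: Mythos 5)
Your lower bound and your reduction via the separation hypothesis are both correct and match the paper: strong separation forces the cylinders meeting $B(x,R)$ of diameter $\gtrsim R$ to form a nested chain, so $B(x,R)\cap F = B(x,R)\cap F_w$ for a single maximal word $w$, and Lemma~\ref{l:assouadgeo} rescales the problem. The genuine gap is exactly the step you flag as ``the main obstacle'': bounding the total cost of the intermediate-size cylinders by $(R/r)^\alpha$ rather than $r^{-\alpha}$. The tool you propose for it, the pressure decay $\sum_{w\in I^*}\|S_w'\|^\alpha<\infty$ coming from $\alpha>h$, cannot close this step: that series converges only globally, so summing the per-parent cost $(\|S_u'\|/r)^\alpha$ over parents yields $\lesssim r^{-\alpha}\sum_u\|S_u'\|^\alpha\lesssim r^{-\alpha}$, and there is no a priori localisation of the pressure series to the words whose cylinders meet $B(x,R)$. (Indeed, the paper's proof of this theorem never invokes the pressure function for the upper bound.)

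What closes the argument in the paper is a decomposition that makes your multi-level parent sum unnecessary. By the separation condition, after pulling back by $S_w$ every \emph{first-level} cylinder $F_i$ meeting the rescaled ball has diameter $\lesssim R/\|S_w'\|$, so only first-level cylinders of the rescaled system need to be handled --- the upper box dimension bound for $F$ absorbs all deeper levels in one stroke. One fixes $t>s>\max\{h,\asd P\}$, sorts these cylinders into dyadic classes $I_k$ with $|F_i|\approx 2^{-k}R/\|S_w'\|$ for $0\le k\le k_0$ (sizes between $r$ and $R$ after rescaling), and counts $\# I_k\lesssim 2^{ks}$ by applying your input (c) to $P$ \emph{inside the rescaled ball} together with the constant $M$ from~\eqref{e:mdefn}; this localised count via $\asd P$, not the pressure series, is what controls how many cylinders of a given size can meet the ball. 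Each member of $I_k$ is then covered at cost $\lesssim 2^{-kt}(R/r)^t$ using $\ubd F<t$ and Lemma~\ref{l:assouadgeo}, cylinders smaller than $r$ are handled directly by $\asd P$, and the sum $\sum_k 2^{k(s-t)}$ converges because of the exponent gap $t>s$. Your sketch is missing both the localisation mechanism and the exponent bookkeeping that makes the dyadic sum close; as written, the proposed route through the pressure series would not produce the required $(R/r)^\alpha$ bound.
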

\begin{proof}%
Let $x \in F$ and $0 < r < R < |F|$, and write $B \coloneqq B(x,R)$. We will cover $B \cap F$ efficiently at scale $r$ by noting that by the separation condition, the cylinders with size much greater than $R$ which intersect $B$ must form a nested sequence. Within the deepest of these cylinders, we will count subcylinders of a given size between $r$ and $R$ using the Assouad dimension of $P$ and cover each using the box dimension of $F$, and we use $\asd P$ to cover all cylinders which are smaller than $r$.

Fix $\delta \in(0,\mbox{dist}(X,\Rd \setminus V))$, and let $X'$ be the closed $\delta$-neighbourhood of $X$. 
By the proof of~\cite[Lemma~2.9]{Banaji2021}, we may increase $D$ (which is defined near~\eqref{e:diameterslemma}) so that~\eqref{e:diameterslemma} holds even if the infimum and supremum in the definition of $r_w$ and $R_w$ respectively (see~\eqref{e:definerw}) are taken over $X'$ instead of $X$. 
 First observe that by the assumed separation condition, there is a unique $w \in I^*$ such that $F_w \cap B \neq \varnothing$ and $|F_w| \geq D^2 R / \delta$ and such that if $v \in I^*$ satisfies $F_v \cap B \neq \varnothing$ and $|F_v| \geq D^2 R / \delta$ then $v$ is a subword of $w$. 
Then $B \cap F = B \cap F_w$. 
It is possible that $w$ could be the empty word, in which case recall that $S_{\varnothing}$ is defined to be the identity map. 
If $i \in I$ is such that $S_w^{-1}(B) \cap F_i \neq \varnothing$, then $|F_i| \leq D^3 R / (\delta ||S_w'||)$. 
Let $k_0 \geq 0$ be such that $D^3 R 2^{-(k_0+1)} / \delta < r \leq D^3 R 2^{-k_0} / \delta$. For $0 \leq k \leq k_0$, define 
\[ I_k \coloneqq \{ \, i \in I : F_i \cap S_w^{-1}(B) \neq \varnothing, \, D^3 R 2^{-(k+1)} / ( \delta ||S_w'||)  < |F_i| \leq D^3 R 2^{-k} / (\delta ||S_w'|| ) \, \}. \] 
Let $t > s > \max\{h,\dim_{\mathrm A} P \}$ and let $C>0$ be the constant from the definition of Assouad dimension~\eqref{e:assouaddef} corresponding to exponent $s$. 
Since $t > \ubd F$, we may increase $C$ further so that for all $r' \in (0,|F|]$ we have $N_{r'}(F) \leq C (r')^{-t}$. 
Then 
\begin{align*}
 \# I_k &\leq \# \{ \, i \in I : X_i \subset B(S_w^{-1}(x),2 D^3 R/(\delta ||S_w'||)), \\*
 &\phantom{--------} D^3 R 2^{-(k+1)} /( \delta ||S_w'||) < |F_i| \leq D^3 R/(\delta ||S_w'||) \, \} \\
&\leq M N_{D^3 R 2^{-(k+1)} / (\delta ||S_w'||)} (P \cap B(S_w^{-1}(x),2D^3R/(\delta ||S_w'||)))  \quad \qquad \mbox{(M is from~\eqref{e:mdefn})} \\
&\leq 4 M C 2^{ks}. 
\end{align*}
If $i \in I_k$ then by Lemma~\ref{l:assouadgeo} and~\eqref{e:2nddiams}, 
\[ N_{r/||S_w'||}(F_i) \leq D' N_{r/(||S_w'||\cdot ||S_i'||)}(F) \leq C D' (D^4 R 2^{-k}  / (\delta r))^t = C D' D^{4t} \delta^{-t} 2^{-kt} (R/r)^t.\] 
We also define 
\[ \mathit{SMALL} \coloneqq \{ \, j \in I : F_j \cap B(S_w^{-1}(x),DR/||S_w'||) \neq \varnothing, \, |F_j| \leq D^3 R 2^{-(k_0+1)} / (\delta ||S_w'||) \, \}. \]
Then since $s > \asd P$, recalling that $A_{d,3}$ is from \eqref{e:doublingconst}, 
 \[ N_{r/||S_w'||}( \{ \, F_j : j \in \mathit{SMALL} \, \} ) \leq A_{d,3} C \left(\frac{DR/||S_w'||}{r/||S_w'||}\right)^s = A_{d,3} C D^s (R/r)^s. \] 
Also, $S_w^{-1}(B) \cap F \subseteq B(S_w^{-1}(x),DR/||S_w'||) \cap F$, so by Lemma~\ref{l:assouadgeo}, 
\begin{align*}
N_r(B \cap F) &\leq D' N_{r/||S_w'||} (S_w^{-1}(B) \cap F) \\
&\leq D' \left( \sum_{k = 0}^{k_0} (\# I_k)C D' D^{4t} \delta^{-t} 2^{-kt} (R/r)^t  +   N_{r/||S_w'||}( \{ \, F_j : j \in \mathit{SMALL} \, \} ) \right) \\
&\leq D' \left( 4 M C^2 D' D^{4t} \delta^{-t}  \sum_{k = 0}^\infty 2^{-(t-s)k} + A_{d,3} C D^s \right) (R/r)^t .
\end{align*}
Since $s,t$ were arbitrary, the result follows. 
\end{proof}

The Assouad dimension is related to \emph{porosity}. A set is said to be porous if there exists $\alpha \in (0,1/2)$ such that for all $x \in F$ and $r>0$ there exists $y \in B(x,r)$ such that $B(y,\alpha r) \cap F = \varnothing$. 

\begin{cor}
Suppose a CIFS on $\Rd$ satisfies $\overline{S_i}(V) \cap \overline{S_j}(V) = \varnothing$ whenever $i \neq j$. Let $P$ be a subset of $\cup_i S_i(X)$ intersecting $S_i(X)$ in exactly one point for each $i$, and assume the limit set $F$ satisfies $\dim_{\mathrm H} F < d$. Then $F$ is porous if and only if $P$ is porous. 
\end{cor}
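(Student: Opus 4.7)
The plan is to reduce the corollary to Theorem~\ref{t:asd} via the standard equivalence between porosity and the Assouad dimension being strictly less than the ambient dimension. Specifically, for any subset of $\Rd$, porosity (in the sense defined immediately before the corollary) is equivalent to the Assouad dimension being strictly smaller than $d$; this is a classical result (see for instance~\cite[Theorem~5.1.3]{Fraser2020book}). The characterisation is often stated for closed sets, but both sides are unchanged upon taking closures inside $\Rd$: the Assouad dimension of a set equals that of its closure, and porosity is preserved under closure because the balls witnessing porosity are open (with a possible factor of $2$ loss in the porosity constant).

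Given this equivalence, the corollary follows formally. By Theorem~\ref{t:asd}, which applies under the separation hypothesis $\overline{S_i}(V) \cap \overline{S_j}(V) = \varnothing$ for distinct $i,j$, we have
\[ \asd F = \max\{h, \asd P\}. \]
The hypothesis $h = \dim_{\mathrm H} F < d$ therefore gives $\asd F < d$ if and only if $\asd P < d$. Applying the porosity/Assouad-dimension equivalence to $F$ and to $P$ (both of which are subsets of $\Rd$) yields that $F$ is porous if and only if $P$ is porous.

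The main (and only) obstacle is locating and invoking the porosity characterisation correctly; there is no substantive new geometric argument required beyond what is already in Theorem~\ref{t:asd}.
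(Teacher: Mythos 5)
Your proposal is correct and is essentially identical to the paper's proof: both reduce the corollary to Theorem~\ref{t:asd} via the equivalence between porosity and having Assouad dimension strictly less than $d$ (the paper cites~\cite[Theorem~5.1.5]{Fraser2020book} for this), using the hypothesis $\dim_{\mathrm H} F < d$ to conclude that $\asd F < d$ if and only if $\asd P < d$. The extra remark about passing to closures is harmless but unnecessary, since the cited characterisation applies to arbitrary subsets of $\Rd$.
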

\begin{proof}
A subset of $\Rd$ is porous if and only if its Assouad dimension is less than $d$ (see for example~\cite[Theorem~5.1.5]{Fraser2020book}). Therefore the result follows from Theorem~\ref{t:asd}. 
\end{proof}

\section{Bounds for the Assouad spectrum}\label{s:asp}

\subsection{The bounds}

The following lemma shows that there is a certain uniformity in the definition of the upper Assouad spectrum. 
\begin{lma}\label{l:uniformconst}
For all bounded sets $F \subset \Rd$, $\epsilon > 0$, $0 < \beta < 1$ there exists $R_{F,\beta,\epsilon} \in (0,1)$ such that for all $\theta \in (0,\beta]$ and $x \in F$, if $0<r\leq R^{1/\theta} \leq R \leq R_{F,\beta,\epsilon}$ then 
\[N_r(B(x,R) \cap F) \leq (R/r)^{\uasp F + \epsilon}.\]
\end{lma}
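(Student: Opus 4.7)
The strategy is to reduce the uniformity over $\theta$ to a finite set of values by exploiting continuity and monotonicity of $\theta \mapsto \uasp F$, and then absorb the resulting constants into an $\epsilon/2$ gain in the exponent using the constraint $r \leq R^{1/\theta}$. The main obstacle is that applying the definition of $\uasp F$ separately at each $\theta$ produces a constant $C_\theta$ whose behaviour as $\theta$ varies over $(0,\beta]$ is not directly controlled by the definition, so one cannot simply take $R_{F,\beta,\epsilon}$ from a single application of the definition.

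Using that $\theta \mapsto \uasp F$ is continuous on $(0,1)$, is monotonically increasing, and satisfies $\uasp F \to \ubd F$ as $\theta \to 0^+$, choose a finite partition $0 < \theta_0 < \theta_1 < \cdots < \theta_n = \beta$ so that $\overline{\dim}_\mathrm{A}^{\theta_0} F - \ubd F \leq \epsilon/4$ and $\overline{\dim}_\mathrm{A}^{\theta_k} F - \overline{\dim}_\mathrm{A}^{\theta_{k-1}} F \leq \epsilon/4$ for $1 \leq k \leq n$. For each $k$, apply the definition of the upper Assouad spectrum at $\theta_k$ with tolerance $\epsilon/4$ to obtain $C_k > 0$ such that $N_r(B(x,R) \cap F) \leq C_k (R/r)^{\overline{\dim}_\mathrm{A}^{\theta_k} F + \epsilon/4}$ whenever $0 < r \leq R^{1/\theta_k} \leq R \leq 1$, and set $C \coloneqq \max_{0 \leq k \leq n} C_k$.

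Given any $\theta \in (0,\beta]$ and scales satisfying the hypothesis, let $k$ be the smallest index with $\theta \leq \theta_k$. Since $R \leq 1$ and $\theta \leq \theta_k$, we have $r \leq R^{1/\theta} \leq R^{1/\theta_k}$, so the bound at $\theta_k$ applies. The partition condition combined with monotonicity of $\uasp F$ in $\theta$ (for $k \geq 1$), or the inequality $\ubd F \leq \uasp F$ (for $k = 0$), yields $\overline{\dim}_\mathrm{A}^{\theta_k} F \leq \uasp F + \epsilon/4$, and therefore $N_r(B(x,R) \cap F) \leq C (R/r)^{\uasp F + \epsilon/2}$. The constraint $r \leq R^{1/\theta}$ together with $\theta \leq \beta$ and $R \leq 1$ give $R/r \geq R^{-(1/\theta - 1)} \geq R^{-(1/\beta - 1)}$, so choosing $R_{F,\beta,\epsilon}$ small enough (depending only on $C$, $\beta$, and $\epsilon$) ensures $(R/r)^{\epsilon/2} \geq C$, absorbing the constant and finishing the argument.
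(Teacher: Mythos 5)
Your proof is correct and follows essentially the same strategy as the paper's: reduce to finitely many reference values of $\theta$ using the regularity of $\theta \mapsto \uasp F$, take the maximum of the resulting constants, and absorb that constant into an extra $\epsilon/2$ in the exponent via $R/r \geq R^{-(1/\beta - 1)}$. The only cosmetic difference is that you build the finite set of reference exponents from monotonicity of the upper Assouad spectrum, whereas the paper uses a compactness/open-cover argument on $[\alpha,\beta]$; both are valid.
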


\begin{proof}
Since $\uasp F$ is a continuous function of $\theta$, for all $\theta \in (0,1)$ there exists $\eta = \eta_\theta > 0$ small enough that $\eta < \max\{\theta,1-\theta\}$ and $\overline{\dim}_{\mathrm A}^{\theta + \eta} F < \overline{\dim}_{\mathrm A}^{\theta - \eta} F + \epsilon/2$. There exists $\alpha \in (0,\beta)$ small enough that $\overline{\dim}_{\mathrm A}^\alpha F < \ubd F + \epsilon/2$. Since $[\alpha,\beta]$ is compact, there exists a finite set $\theta_1,\ldots,\theta_n \in [\alpha,\beta]$ such that $[\alpha,\beta] \subset \cup_{i=1}^n (\theta_i - \eta_{\theta_i},\theta_i + \eta_{\theta_i})$. By definition of the upper Assouad spectrum, for each $i$ there exists $C_i > 1$ such that for all $\theta \in (\theta_i - \eta_{\theta_i},\theta_i + \eta_{\theta_i})$, $x \in F$ and $0<r\leq R^{1/\theta} \leq R \leq 1$ (so $r < R^{1/(\theta_i + \eta_{\theta_i})}$), we have 
\[ N_r(B(x,R) \cap F) \leq C_i (R/r)^{\overline{\dim}_{\mathrm A}^{\theta_i - \eta_{\theta_i}} F + \epsilon/2} \leq C_i (R/r)^{\overline{\dim}_{\mathrm A}^{\theta} F + \epsilon/2}. \] 
There exists $C_0 > 1$ such that for all $\theta \in (0,\alpha]$, $x \in F$, $0<r\leq R^{1/\theta} \leq R \leq 1$ (so $r \leq R^{1/\alpha}$), we have 
\[  N_r(B(x,R) \cap F) \leq C_0 (R/r)^{\ubd F + \epsilon/2} \leq C_0 (R/r)^{\uasp F + \epsilon/2}.\] 
Let $C_{\beta,\epsilon} \coloneqq \max_{0 \leq i \leq n} C_i$. 
Choose $R_{F,\beta,\epsilon} \in (0,1)$ small enough that $R_{F,\beta,\epsilon}^{(\beta^{-1} - 1)\epsilon/2} < C_{\beta,\epsilon}^{-1}$. Suppose $0<r\leq R^{1/\theta} \leq R \leq R_{F,\beta,\epsilon} < 1$. Then 
\[ N_r(B(x,R) \cap F) \leq C_{\beta,\epsilon} (R/r)^{\uasp F + \epsilon/2} \leq (R/r)^{\uasp F + \epsilon}, \]
as required. 
\end{proof}

In~\cite[Section~8]{Fraser2018-2} it was shown that the Assouad spectrum of sets is not always monotonic, but the following lemma shows that for limit sets of a CIFS it is in fact monotonic. 
\begin{lma}\label{l:monotoniclemma}
If $F$ is the limit set of a CIFS then the function $\theta \mapsto \asp F$ is increasing in $\theta \in (0,1)$. 
\end{lma}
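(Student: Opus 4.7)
The plan is to prove monotonicity by establishing the stronger statement that $\dim_\mathrm{A}^\phi F \leq \dim_\mathrm{A}^\theta F$ whenever $0 < \phi < \theta < 1$, via a two-stage covering argument that combines the Assouad spectrum bound at the larger parameter $\theta$ with the upper box dimension of $F$, exploiting the self-conformal structure. Fix such $\phi, \theta$ and let $\alpha > \dim_\mathrm{A}^\theta F$; choose $\epsilon > 0$ small enough that $\ubd F + \epsilon < \alpha$, which is possible since $\ubd F \leq \dim_\mathrm{A}^\theta F$. The aim is to bound $N_{R^{1/\phi}}(B(x,R) \cap F)$ by a constant multiple of $R^{\alpha(1 - 1/\phi)}$, uniformly in $x \in F$ and $R \in (0,R_0]$ for some $R_0 > 0$.

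Stage~1 applies the definition of $\dim_\mathrm{A}^\theta F$ to produce a cover of $B(x,R) \cap F$ by $\lesssim R^{\alpha(1-1/\theta)}$ balls $B_i$ of radius $R^{1/\theta}$. Stage~2 refines each $B_i$ to the finer scale $R^{1/\phi}$, and here the CIFS structure enters. Using the maximal antichain of words $W_i \subseteq I^*$ such that $S_w(X) \cap B_i \neq \varnothing$ and $|S_w(X)| \geq R^{1/\theta}/2$, Lemma~2.11 of~\cite{Banaji2021} gives $\# W_i \leq M$; by maximality and a descent argument on the address $\sigma \in I^\N$ of each point $x \in B_i \cap F$, one verifies $B_i \cap F \subseteq \bigcup_{w \in W_i} S_w(F)$. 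For each $w \in W_i$, \eqref{e:2nddiams} gives $||S_w'|| \geq R^{1/\theta}/(2D)$, and then Lemma~\ref{l:assouadgeo} combined with the box dimension bound $N_\rho(F) \lesssim \rho^{-(\ubd F + \epsilon)}$ yields
\[N_{R^{1/\phi}}(S_w(F)) \lesssim (R^{1/\phi}/||S_w'||)^{-(\ubd F + \epsilon)} \lesssim R^{(\ubd F + \epsilon)(1/\theta - 1/\phi)}.\]
Summing over $w \in W_i$ and then over $i$ gives
\[N_{R^{1/\phi}}(B(x,R) \cap F) \lesssim R^{\alpha(1-1/\theta) + (\ubd F + \epsilon)(1/\theta - 1/\phi)}.\]

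A direct calculation shows that
\[\alpha(1-1/\theta) + (\ubd F + \epsilon)(1/\theta - 1/\phi) - \alpha(1 - 1/\phi) = (1/\phi - 1/\theta)(\alpha - \ubd F - \epsilon) > 0,\]
so since $R \leq 1$ the combined exponent is at least the target $\alpha(1 - 1/\phi)$, giving the required bound. Letting $\alpha \downarrow \dim_\mathrm{A}^\theta F$ yields $\dim_\mathrm{A}^\phi F \leq \dim_\mathrm{A}^\theta F$, which is exactly the monotonicity claimed.

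The main obstacle is the decomposition step, namely showing that $B_i \cap F$ is actually covered by the images $S_w(F)$ for $w$ in the chosen antichain $W_i$ of size $\leq M$. For each $x \in B_i \cap F$ with address $\sigma$, one identifies the largest $n$ with $|S_{\sigma|_n}(X)| \geq R^{1/\theta}/2$ and argues by maximality that the resulting cylinder is comparable (as a word) to some $w \in W_i$, with the OSC and a brief case analysis used to handle boundary points (alternatively, enlarging $B_i$ by a fixed constant factor absorbs these without affecting the exponents). The uniformity of the constants, which depend only on the CIFS, ensures the bound extends to the full spectrum, completing the proof.
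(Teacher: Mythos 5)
Your overall strategy runs in the opposite direction to the paper's: you try to prove the \emph{upper} bound $\dim_{\mathrm A}^{\phi} F \leq \dim_{\mathrm A}^{\theta} F$ for $\phi < \theta$ by a two-stage covering argument, whereas the paper proves the \emph{lower} bound $\dim_{\mathrm A}^{\phi} F \geq \dim_{\mathrm A}^{\theta} F$ for $\phi > \theta$ by taking a hard-to-cover configuration at scales $(r_n, r_n^{\theta})$ and pushing it forward under an iterate $S^k$ whose derivative norm is tuned so that the image witnesses largeness of the spectrum at $\phi$. Either direction would suffice, but your Stage~2 contains a genuine error. You claim that $B_i \cap F$ (a ball of radius $R^{1/\theta}$) is covered by at most $M$ cylinders $S_w(F)$ with $\|S_w'\| \approx R^{1/\theta}$, and that each such cylinder costs $\lesssim (R^{1/\theta}/R^{1/\phi})^{\ubd F + \epsilon}$ balls of radius $R^{1/\phi}$. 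If this were true, the same argument with no constraint between the two scales would give $N_r(B(y,R')\cap F) \lesssim M (R'/r)^{\ubd F + \epsilon}$ for all $r < R'$, i.e.\ $\asd F \leq \ubd F$ for every CIFS limit set --- contradicting Theorem~\ref{t:asd} and Theorem~\ref{t:sharp}, where $\asd F = 1 > h = \ubd F$. So the step cannot be repaired as stated.

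Concretely, two things go wrong. First, the decomposition: in an \emph{infinite} CIFS a point's address can pass directly from a cylinder of diameter $\gg R^{1/\theta}$ to a child of diameter $\ll R^{1/\theta}$ (take $S_1$ of the continued-fraction system followed by $S_b$ with $b$ enormous). The collection of ``last prefixes with $|S_w(X)| \geq R^{1/\theta}/2$'' is then not an antichain, and if you pass to its minimal elements to restore the antichain property and the bound $\#W_i \leq M$, those minimal words can have $|S_w(X)|$ as large as $|X|$, not comparable to $R^{1/\theta}$. Second, and relatedly, your displayed inequality $(R^{1/\phi}/\|S_w'\|)^{-(\ubd F + \epsilon)} \lesssim R^{(\ubd F + \epsilon)(1/\theta - 1/\phi)}$ requires the \emph{upper} bound $\|S_w'\| \lesssim R^{1/\theta}$, but you only derive the lower bound $\|S_w'\| \geq R^{1/\theta}/(2D)$; with only that, the left-hand side is bounded \emph{below}, not above, by the right-hand side. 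What your two-stage scheme actually yields for a general set is $N_{R^{1/\phi}}(B_i\cap F) \lesssim (R^{1/\theta}/R^{1/\phi})^{\dim_{\mathrm A}^{\phi/\theta} F + \epsilon}$, and replacing $\dim_{\mathrm A}^{\phi/\theta} F$ by $\ubd F$ is precisely the point where the contribution of cylinders of intermediate sizes --- counted in the paper via the Assouad spectrum of the fixed-point set $P$ in the $I_k$/$\mathit{SMALL}$ analysis of Claim~\ref{claim:asp} --- is being illegitimately discarded. I would suggest instead proving the lower-bound direction as in the paper: it needs only the bounded distortion property and Lemma~\ref{l:assouadgeo}, and avoids any covering decomposition.
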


\begin{proof}
Suppose $0<\theta<\phi<1$. The idea is that if a part of $F$ is difficult to cover at a scale corresponding to $\theta$, then the image of this part of $F$ within a cylinder with an appropriately chosen contraction ratio will be difficult to cover at the scale corresponding to $\phi$. Moreover, we can iterate any given map to choose a cylinder with the desired contraction ratio up to a constant multiple. 

Let $t \in (0,\asp F)$. 
Then there exist sequences $x_n \in F$ and $r_n \in (0,1)$ such that $r_n \to 0$ as $n \to \infty$ and 
\begin{equation}\label{e:assmoneqn}
r_n^{t(1- \theta)} N_{r_n}(F \cap B(x_n,r_n^\theta)) \to \infty \mbox{ as } n \to \infty.
\end{equation} 
Let $S$ be any map in the IFS and define $S^0$ to be the identity function on $V$, and $S^l \coloneqq S \circ \dotsb \circ S$, $l$ times, and $F_l \coloneqq S^l(F)$. Fix $n \in \N$. 
Noting that $||(S^l)'||$ is decreasing in $l$, let $k = k(n)$ be the smallest natural number such that $||(S^k)'|| \leq (Dr_n^{\theta-\phi})^{\frac{1}{\phi - 1}}$, where $D$ is from~\eqref{e:diameterslemma}. Then 
\begin{align*}
N_{||(S^k)'||r_n}&(F \cap B(S^k(x_n),(||(S^k)'||r_n)^\phi)) \\
&\geq  N_{||(S^k)'||r_n}(F \cap B(S^k(x_n),D||(S^k)'||r_n^\theta)) &&\text{by the definition of } k \\
&\geq  N_{||(S^k)'||r_n}(S^k(F \cap B(x_n,r_n^\theta))) &&\text{by \eqref{e:diameterslemma}} \\
&\geq  (D')^{-1} N_{r_n}(F \cap B(x_n,r_n^\theta)) &&\text{by Lemma~\ref{l:assouadgeo}}. 
\end{align*}
It follows that 
\begin{align*}
(||(S^k)'||&r_n)^{t(1-\phi)} N_{||(S^k)'||r_n}(F \cap B(S^k(x_n),(||(S^k)'||r_n)^\phi)) \\
&\geq (||(S^{k-1})'||\cdot ||S'|| K^{-1} r_n)^{t(1-\phi)}(D')^{-1} N_{r_n}(F \cap B(x_n,r_n^\theta)) &&\text{by the BDP} \\
&> (((Dr_n^{\theta-\phi})^{\frac{1}{\phi - 1}}) ||S'|| K^{-1} r_n)^{t(1-\phi)} (D')^{-1} N_{r_n}(F \cap B(x_n,r_n^\theta)) &&\text{by the definition of } k \\
&= D^{-t} ||S'||^{t(1-\phi)} K^{t(\phi-1)}(D')^{-1}  r_n^{t(1- \theta)} N_{r_n}(F \cap B(x_n,r_n^\theta)) \\
&\xrightarrow[]{} \infty \mbox{ as } n \to \infty &&\text{by \eqref{e:assmoneqn}}.
\end{align*}%
Therefore $\dim_{\mathrm{A}}^\phi F \geq t$, and letting $t \to (\asp F)^-$ gives $\dim_{\mathrm{A}}^\phi F \geq \asp F$, as required. 
\end{proof}

Fix an arbitrary $P \subseteq \cup_i S_i(X)$ intersecting $S_i(X)$ in exactly one point for each $i$. For $\theta \in (0,1)$ and $\phi \in [\theta,1]$, we introduce the continuous function 
\begin{equation}\label{e:fdefn}
f(\theta,\phi) \coloneqq \frac{(\phi^{-1} - 1) \overline{\dim}_\mathrm{A}^\phi P + (\theta^{-1} - \phi^{-1}) \ubd F}{\theta^{-1} - 1}.
\end{equation}
By Lemma~\ref{l:samewithinlevelasd}, the function $f(\theta,\phi)$ does not depend on the choice of $P$. 
Recall that $\ubd F = \max\{h,\ubd P\}$ by~\cite[Theorem~3.5]{Banaji2021}. 
Note in particular that $f(\theta,\theta) = \uasp P$ and $f(\theta,1) = \ubd F$. 
It will be important to note that by the continuity of the upper Assouad spectrum, for fixed $\theta \in (0,1)$, the function $\phi \mapsto f(\theta,\phi)$ is continuous, and hence attains a maximum, on the interval $\phi \in [\theta,1]$. The following technical lemma will be used in the proof of Theorem~\ref{t:mainasp}.

\begin{lma}\label{l:fbound}
If $0 < \theta' \leq \theta < 1$ and $\theta' \leq \phi' \leq 1$ then $f(\theta',\phi') \leq \max_{\phi \in [\theta,1]} f(\theta,\phi)$. 
\end{lma}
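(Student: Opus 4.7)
The plan is to exploit the convex-combination structure of $f$. For fixed $\theta \in (0,1)$ and $\phi \in [\theta,1]$, set $\lambda(\theta,\phi) \coloneqq (\phi^{-1}-1)/(\theta^{-1}-1) \in [0,1]$, so that
\[ f(\theta,\phi) = \lambda(\theta,\phi)\, \overline{\dim}_{\mathrm A}^\phi P + (1 - \lambda(\theta,\phi))\, \ubd F. \]
In particular $f(\theta,\theta) = \overline{\dim}_{\mathrm A}^\theta P$ and $f(\theta,1) = \ubd F$, so the maximum on the right-hand side of the desired inequality is at least $\max\{\overline{\dim}_{\mathrm A}^\theta P,\, \ubd F\}$. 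Since $\theta' \leq \theta$, we have $\lambda(\theta',\phi') \leq \lambda(\theta,\phi')$ whenever the latter is defined, i.e.\ whenever $\phi' \geq \theta$.

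The approach is then to split into two cases according to whether $\phi' \geq \theta$ or $\phi' < \theta$. In the first case, both $f(\theta',\phi')$ and $f(\theta,\phi')$ are weighted averages of the same two quantities $\overline{\dim}_{\mathrm A}^{\phi'} P$ and $\ubd F$, but with the weight on $\overline{\dim}_{\mathrm A}^{\phi'} P$ being smaller for $\theta'$. A short sub-case analysis finishes this:
\begin{itemize}
\item if $\overline{\dim}_{\mathrm A}^{\phi'} P \geq \ubd F$, then placing less weight on the larger term gives $f(\theta',\phi') \leq f(\theta,\phi') \leq \max_{\phi \in [\theta,1]} f(\theta,\phi)$;
\item if $\overline{\dim}_{\mathrm A}^{\phi'} P \leq \ubd F$, then $f(\theta',\phi')$ is a convex combination of two quantities bounded by $\ubd F$, so $f(\theta',\phi') \leq \ubd F = f(\theta,1)$.
\end{itemize}

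In the second case ($\phi' < \theta$), monotonicity of the upper Assouad spectrum (which is built into its definition) gives $\overline{\dim}_{\mathrm A}^{\phi'} P \leq \overline{\dim}_{\mathrm A}^\theta P$, and hence
\[ f(\theta',\phi') \leq \max\{\overline{\dim}_{\mathrm A}^{\phi'} P,\, \ubd F\} \leq \max\{\overline{\dim}_{\mathrm A}^\theta P,\, \ubd F\} = \max\{f(\theta,\theta),\, f(\theta,1)\}, \]
which is bounded by $\max_{\phi \in [\theta,1]} f(\theta,\phi)$.

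The only real obstacle is noticing that the direction of the weight comparison depends on the sign of $\overline{\dim}_{\mathrm A}^{\phi'} P - \ubd F$, which forces the sub-case analysis in Case~1; everything else reduces to rewriting $f$ as a convex combination and invoking monotonicity of $\phi \mapsto \overline{\dim}_{\mathrm A}^\phi P$. No other results from the paper are needed beyond the identities $f(\theta,\theta) = \overline{\dim}_{\mathrm A}^\theta P$ and $f(\theta,1) = \ubd F$.
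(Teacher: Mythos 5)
Your proof is correct. It rests on the same underlying fact as the paper's argument --- that for fixed $\phi$ the map $\theta \mapsto f(\theta,\phi)$ is non-decreasing precisely when $\overline{\dim}_{\mathrm A}^{\phi} P \geq \ubd F$ --- but you establish this by rewriting $f(\theta,\phi)$ as a convex combination of $\overline{\dim}_{\mathrm A}^{\phi} P$ and $\ubd F$ and comparing the weights $\lambda(\theta',\phi')\leq\lambda(\theta,\phi')$, whereas the paper verifies $\frac{\partial f}{\partial\theta}\geq 0$ via the quotient rule. You also organise the cases differently: the paper first passes to a maximiser $\phi_1$ of $f(\theta',\cdot)$ on $[\theta',1]$ and then splits on whether $\phi_1\leq\theta$, while you split directly on whether $\phi'\geq\theta$ and handle the sub-threshold case by monotonicity of $\phi\mapsto\overline{\dim}_{\mathrm A}^{\phi}P$ (which the paper also invokes, in the guise of $\overline{\dim}_{\mathrm A}^{\phi_1}\leq\overline{\dim}_{\mathrm A}^{\theta}$ when $\phi_1\leq\theta$). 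The two routes buy essentially the same thing; yours is slightly more elementary in that it avoids any differentiation, and it works directly with the given $\phi'$ rather than a maximiser, at the mild cost of an extra sub-case in Case~1. All steps check out: the weights $(\phi^{-1}-1)/(\theta^{-1}-1)$ and $(\theta^{-1}-\phi^{-1})/(\theta^{-1}-1)$ are indeed non-negative and sum to $1$ exactly when $\theta\leq\phi\leq 1$, which is why your Case~2 (where $f(\theta,\phi')$ is not a convex combination) must be, and is, treated separately.
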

\begin{proof}
We may assume $\theta' < \theta$. Let $\phi_1 \in [\theta',1]$ be such that $\max_{\phi \in [\theta',1]} f(\theta',\phi) = f(\theta',\phi_1)$. 
If $\ubd F > \overline{\dim}_{\mathrm A}^{\phi_1} P$ then clearly $\phi_1 = 1$ (so in fact $\overline{\dim}_{\mathrm A}^{\phi_1} P = \qad P$ by definition), and $f(\theta',\phi_1) = \ubd F = f(\theta,1) = \max_{\phi \in [\theta,1]} f(\theta,\phi)$. 
Therefore we may henceforth assume that $\ubd F \leq \overline{\dim}_{\mathrm A}^{\phi_1} P$. 
Using the quotient rule, if $\theta' < \phi_1$ then for all $\theta_1 \in (\theta',\min\{\phi_1,\theta\})$ we have $\frac{\partial f}{\partial \theta}(\theta_1,\phi_1) \geq 0$. Thus  
\[ f(\theta',\phi') \leq f(\theta',\phi_1) \leq f(\min\{\phi_1,\theta\},\phi_1). \]
Therefore if $\phi_1 \leq \theta$, then 
\[ f(\theta',\phi') \leq f(\phi_1,\phi_1) = \overline{\dim}_{\mathrm A}^{\phi_1} F \leq  \overline{\dim}_{\mathrm A}^{\theta} F = f(\theta,\theta) \leq \max_{\phi \in [\theta,1]} f(\theta,\phi).\]
If, on the other hand, $\phi_1 > \theta$, then 
\[ f(\theta',\phi') \leq f(\theta,\phi_1) \leq \max_{\phi \in [\theta,1]} f(\theta,\phi). \]
In either case, the required bound holds. 
\end{proof}

 Theorem~\ref{t:mainasp} provides bounds for the Assouad spectrum of a CIFS, and is the main result of this section. These bounds will be illustrated by examples and figures in Section~\ref{s:sharp}, which also show that the bounds are sharp. 
 Here and in subsequent sections, we assume only the conditions in Definition~\ref{d:cifs}, and not the additional separation condition from Section~\ref{s:asd}. 
Recall the definition of $f(\theta,\phi)$ from~\eqref{e:fdefn}, and recall that $h \coloneqq \dim_{\mathrm H} F$. 
\begin{thm}\label{t:mainasp}
Let $F$ be the limit set of any CIFS with notation as above. Let $P$ be any subset of $\cup_i S_i(X)$ which intersects $S_i(X)$ in exactly one point for each $i$. Then for all $\theta \in (0,1)$,  
\[ \max\{h,\uasp P \} \leq \uasp F = \asp F \leq \max_{\phi \in [\theta,1]} f(\theta,\phi). \]
\end{thm}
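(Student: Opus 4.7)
The plan is to handle the three parts of the theorem in turn.

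For the lower bound, invoke Lemma~\ref{l:samewithinlevelasd} to assume without loss of generality that $P$ is the set of fixed points $p_i = \pi(iii\ldots)$ of the individual contractions. Then $P \subseteq F$, and because the upper Assouad spectrum is monotonic under set inclusion (for $x \in P \subseteq F$, any cover of $F \cap B(x,R)$ at scale $r$ restricts to a cover of $P \cap B(x,R)$), we obtain $\uasp P \leq \uasp F$; the inequality $h \leq \uasp F$ is immediate from $h \leq \ubd F \leq \uasp F$. For the equality $\uasp F = \asp F$, combine Lemma~\ref{l:monotoniclemma} (which says that $\phi \mapsto \dim_{\mathrm A}^\phi F$ is non-decreasing on $(0,1)$) with the general identity $\uasp F = \sup_{\phi \in (0,\theta]} \dim_{\mathrm A}^\phi F$ from~\cite{Fraser2019-3} cited earlier, so that the supremum is attained at $\phi = \theta$.

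The substance of the theorem is the upper bound. Fix $\theta \in (0,1)$ and $\epsilon > 0$, and set $\alpha \coloneqq \max_{\phi \in [\theta,1]} f(\theta,\phi) + \epsilon$. The aim is to establish a uniform estimate $N_r(F \cap B(x,R)) \leq C(R/r)^\alpha$ for $x \in F$ and $0 < r \leq R^{1/\theta} \leq R \leq R_0$, with $R_0$ sufficiently small. The central device is a two-factor decomposition at an intermediate scale $\tilde r = R^{1/\phi}$ with $\phi \in [\theta,1]$: first count the cylinders $S_w(X)$ of diameter comparable to $\tilde r$ that meet $B(x,R)$, then cover each such cylinder at scale $r$. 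Using Lemma~\ref{l:samewithinlevelasd} to pass to a one-point-per-cylinder representative set (effectively a copy of $P$) and invoking the upper Assouad spectrum bound for $P$ at parameter $\phi$, together with Lemma~\ref{l:uniformconst} for the necessary uniformity in $\phi$, the first factor is $\lesssim (R/\tilde r)^{\overline{\dim}_{\mathrm A}^\phi P + O(\epsilon)}$. The second factor reduces via Lemma~\ref{l:assouadgeo} and~\eqref{e:2nddiams} to covering $F$ itself at scale $r/\tilde r$, which costs $\lesssim (\tilde r/r)^{\ubd F + O(\epsilon)}$ once $r/\tilde r$ is small enough (ensured by shrinking $R_0$). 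Multiplying these factors and substituting $\tilde r = R^{1/\phi}$, $r = R^{1/\theta}$ produces exactly the exponent $f(\theta,\phi)$.

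The main obstacle is to upgrade this single-scale computation to a uniform bound valid for every admissible pair $(r, R)$, not only for $r = R^{1/\theta}$. For general $r < R^{1/\theta}$, writing $r = R^{1/\theta'}$ forces $\theta' < \theta$, and the analogous argument at parameter $\theta'$ produces an exponent of the form $f(\theta',\phi')$ with $\theta' \leq \theta$ and $\phi' \in [\theta',1]$ --- a pair of parameters outside those appearing in the statement. Here Lemma~\ref{l:fbound} is essential: it shows $f(\theta',\phi') \leq \max_{\phi \in [\theta,1]} f(\theta,\phi)$, and therefore allows the argument to close. To make the estimate valid across all admissible scales simultaneously, one iterates the decomposition in the spirit of Mauldin and Urbański~\cite[Theorem~2.11]{Mauldin1999}: treating $N_r(F \cap B(x,R)) \leq C(R/r)^\alpha$ as an inductive hypothesis on the depth of cylinder hierarchy required, the inductive step decomposes a ball at the coarsest scale, covers each sub-cylinder using the hypothesis at the rescaled parameters, and sums. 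The delicate point is verifying that the constants do not degrade through the iteration --- this is precisely what Lemma~\ref{l:uniformconst} is designed to supply --- and that the produced exponents do not exceed $\alpha$, as guaranteed by Lemma~\ref{l:fbound}. Letting $\epsilon \to 0$ (and using continuity of $f$) then yields the stated upper bound.
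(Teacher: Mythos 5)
Your lower bound and the equality $\uasp F = \asp F$ are handled exactly as in the paper, and your upper-bound strategy (decompose the cylinders meeting $B(x,R)$ by size, count those of size $\approx R^{1/\phi}$ via the spectrum of $P$, cover each via $\ubd F$, recurse into the large cylinders, and invoke Lemma~\ref{l:fbound} to stay below $\max_{\phi\in[\theta,1]}f(\theta,\phi)$) is the right skeleton. But there is a genuine gap at the one point you flag as ``delicate'': you assert that Lemma~\ref{l:uniformconst} is what prevents the constants from degrading through the iteration. It is not, and as written your induction does not close. Lemma~\ref{l:uniformconst} only supplies uniformity over the intermediate parameter $\phi$ (absorbing the constant from the definition of $\uasp Q$ into an $\epsilon$ in the exponent, for $\phi$ in a compact range $[\alpha,\beta]$ with $\beta<1$). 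The constants in the recursion genuinely \emph{do} degrade: each time you pull a large cylinder back by $T_j^{-1}$ and apply the inductive hypothesis, Lemma~\ref{l:assouadgeo} and \eqref{e:diameterslemma} cost you a multiplicative factor of at least $D'D^s$, so after $n$ levels the constant is of order $C_{\mathrm{tot}}^n$, which is unbounded. The paper's mechanism for absorbing this is the one your sketch omits: first refine the system (via Lemma~\ref{l:finitelymanylarge}) so every contraction ratio is below a small $\lambda$, prove the inductive claim with exponent $s$ strictly \emph{below} the target $t$ and with the growing constant $AC_{\mathrm{tot}}^n$ for $R\geq\lambda^n$, and then exploit the exponential gap $r\leq R^{1/\theta}$ to convert: $(R/r)^{t-s}\geq R^{(1-\theta^{-1})(s-t)}\geq\lambda^{n(1-\theta^{-1})(s-t)}\geq C_{\mathrm{tot}}^n$ once $\lambda$ is chosen as in \eqref{e:lambdadef}. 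Without this two-exponent trick (or some substitute), ``verifying that the constants do not degrade'' is not something Lemma~\ref{l:uniformconst} can deliver, and the uniform bound $N_r(F\cap B(x,R))\leq C(R/r)^\alpha$ cannot be extracted from the recursion. The refinement step is also what makes the induction variable well-defined: it guarantees that passing to the parent of a large cylinder strictly increases $R/\|T_j'\|$ past $\lambda^{n-1}$, whereas your ``depth of cylinder hierarchy'' is not obviously decreasing under the decomposition.

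A secondary issue: the uniformity in Lemma~\ref{l:uniformconst} only covers $\phi\in(0,\beta]$ for a fixed $\beta<1$, so your two-factor bound $(R/\tilde r)^{\overline{\dim}_{\mathrm A}^{\phi}P+O(\epsilon)}$ is not justified for cylinders whose size lies between $R^{1/\beta}$ and $R$. These need separate treatment --- the paper's $\mathit{MED}$ class --- where one counts with the trivial exponent $d$ and compensates by taking $\beta$ so close to $1$ that $R^{(1-\beta^{-1})(d+\epsilon)}$ is negligible against $(R/r)^{s-\ubd F-\epsilon}$, cf.~\eqref{e:definebeta}. Both gaps are repairable, and the repairs are exactly the paper's argument, but as it stands the proposal's inductive step would fail.
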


\begin{proof}
Fix any $\theta \in (0,1)$. 
By Lemma~\ref{l:samewithinlevelasd} we may assume without loss of generality that $P = \{ \, S_i(x) : i \in I \, \}$ for some $x \in \mathrm{Int}_{\Rd}(X) \cap F$. %
Since $\uasp$ is monotonic for subsets, $\uasp P \leq \uasp F$, and by~\cite[Theorem~3.15]{Mauldin1996}, $h = \dim_\mathrm{H} F \leq \uasp F$ so $\max\{h,\uasp P \} \leq \uasp F$. 
Lemma~\ref{l:monotoniclemma} says that the Assouad spectrum of $F$ is monotonic, so since $\overline{\dim}_\mathrm{A}^\theta F = \sup_{\theta' \in (0,\theta]} \dim_\mathrm{A}^{\theta'} F$, we have $\uasp F = \asp F$. 

It remains to show that $\uasp F \leq \max_{\phi \in [\theta,1]} f(\theta,\phi)$. 
 Let $t>\max_{\phi \in [\theta,1]} f(\theta,\phi)$ and fix $s \in (\max_{\phi \in [\theta,1]} f(\theta,\phi),t)$. 
 Let 
 \begin{equation}\label{e:defineeps}
 \epsilon \coloneqq \frac{1}{3}\Big(s - \max_{\phi \in [\theta,1]} f(\theta,\phi)\Big). 
 \end{equation}
 Let $C_{\epsilon} > 0$ be large enough that for all $r \in (0,1]$, $N_r(F) \leq C_{\epsilon} r^{-(\ubd F + \epsilon)}$. 
 We use the same constants $A_{d,r}, D, D', M$ from~\eqref{e:doublingconst},~\eqref{e:diameterslemma},~\eqref{e:2nddiams}, Lemma~\ref{l:assouadgeo},~\eqref{e:mdefn}. 
We introduce the constants: 
\begin{align*}
C_{\mathrm{small}} &\coloneqq A_{d,3} 2^{d + \epsilon}, \\
C_{\mathrm{sum}} &\coloneqq M 2^{2d + 3\epsilon/2} D' C_{\epsilon} \sum_{k=0}^{\infty} 2^{-(k+1)(\epsilon/2)} , \\
C_{\mathrm{med}} &\coloneqq M 2^{d + \epsilon} D' C_{\epsilon}, \\
C_{\mathrm{big}} &\coloneqq M D' D^s, \\
 C_{\mathrm{tot}} &\coloneqq 4 \max\{C_{\mathrm{small}}, C_{\mathrm{sum}}, C_{\mathrm{med}}, C_{\mathrm{big}}\}.
\end{align*}
Note that $C_{\mathrm{tot}} > C_{\mathrm{small}} > 1$. Fix $\lambda \in (0,1)$ small enough that 
\begin{equation}\label{e:lambdadef}
\lambda^{(1-\theta^{-1})(s-t)} < C_{\mathrm{tot}}^{-1}.
\end{equation} %
We now want to make all the contraction ratios small enough so that the constants in the induction argument below do not grow too fast. Suppose $||S_{i_1}'|| = \max_{i \in I}||S_i'|| \geq \lambda$. Then we can form the new CIFS $\{ \, S_i : i \in I \setminus \{i_1\} \, \} \cup \{ \, S_{i_1 j} : j \in I \, \}$. In a similar way, we can replace a map in this new CIFS whose derivative norm is largest. By Lemma~\ref{l:finitelymanylarge}, after repeating this finitely many times we will obtain a set $\{ \, T_j : j \in J \, \}$ with each $||T_j'|| < \lambda$. By induction, $\{T_j\}$ will form a CIFS with the same limit set as $\{S_i\}$, namely $F$. Moreover,~\eqref{e:diameterslemma},~\eqref{e:2nddiams}, Lemma~\ref{l:assouadgeo} and~\cite[Lemma~2.11]{Banaji2021} will hold with the \emph{same constants} $D, D', M$. Recalling that $P = \{ \, S_i(x) : i \in I \, \}$, define $Q \coloneqq \{ \, T_j(x) : j \in J \, \}$. Then $Q$ is the union of finitely many bi-Lipschitz copies of cofinite subsets of $P$. Thus if $\dim$ is any notion of dimension that is stable under bi-Lipschitz maps (for example any of the dimensions considered in this paper) then $\dim \{ \, S_i(x) : i \in I \, \} = \dim \{ \, T_j(x) : j \in J \, \}$.

The main step in the proof is the following claim. Inspired by Mauldin and Urba\'nski's proof of~\cite[Lemmas~2.8 and~2.10]{Mauldin1999} for the upper box dimension, this is proved using an inductive argument to construct efficient covers at smaller scales using efficient covers at larger scales. 

\begin{claim}\label{claim:asp}
There exists a large constant $A \in [1,\infty)$ such that for all $n \in \N$, if $x \in X$, $\lambda^n \leq R \leq 1$ and $0 < r \leq R^{1/\theta}$, then 
\[N_r(F \cap B(x,R)) \leq A C_{\mathrm{tot}}^n (R/r)^s.\] 
\end{claim}%

\begin{proof}[Proof of claim]
First note that $s > f(\theta,1) + \epsilon = \ubd F + \epsilon$. Therefore we can choose $\beta \in (\theta,1)$ to be close enough to 1 that 
\begin{equation}\label{e:definebeta}
 \left( \frac{1}{\beta} - 1\right)(d+\epsilon) \leq \left( \frac{1}{\theta} - 1\right)(s-\ubd F - \epsilon). 
 \end{equation}
Let $N \in \N$ be large enough that $\lambda^{-(N-1)} < R_{Q,\beta,\epsilon/2}/2$ and $\lambda^{(N-1)/\theta} < \lambda^{(N-1)/\beta}/10$, %
where $R_{Q,\beta,\epsilon/2}$ is the constant from Lemma~\ref{l:uniformconst}. 
Since $s > f(\theta,1) = \ubd F$, by choosing $A$ large enough we may assume the claim holds for $n=1,2,\ldots,N$. 
Suppose $n > N$ and assume the claim holds for $1,\ldots,n-1$. We may assume that $\lambda^n \leq R < \lambda^{n-1}$. 
Suppose $x \in X$ and $0 < r \leq R^{1/\theta}$. Define $\theta_r \in (0,\theta]$ and $k_r \in \N$ by $R=r^{\theta_r}$ and $R^{1/\beta} 2^{-(k_r + 2)} < r \leq R^{1/\beta} 2^{-(k_r + 1)}$. We break up the set of cylinders which intersect $B(x,R)$ into pieces depending on size. We will then bound the number of balls of radius $r$ needed to cover each piece separately. The following sets depend on $x$ and $n$: 
\begin{align*} 
\mathit{SMALL} &\coloneqq \{\, j \in J : F_j \cap B(x,R) \neq \varnothing, \, |X_j| \leq R^{1/\beta} 2^{-(k_r + 1)} \, \}, \\
I_k &\coloneqq \{ \, j \in J : F_j \cap B(x,R) \neq \varnothing, \, R^{1/\beta} 2^{-(k + 1)} < |X_j| \leq R^{1/\beta} 2^{-k} \, \} &\text{for } 0 \leq k \leq k_r, \\
\mathit{MED} &\coloneqq \{\, j \in J : F_j \cap B(x,R) \neq \varnothing, \,  R^{1/\beta} < |X_j| < R \, \}, \\
\mathit{BIG} &\coloneqq \{\, j \in J : F_j \cap B(x,R) \neq \varnothing, \, |X_j| \geq R \, \}, 
\end{align*}
Note that these sets are different to the sets $\mathit{SMALL}$ and $I_k$ from the proof of Theorem~\ref{t:asd}; those sets described indices whose cylinder sets intersect a different ball (not $B(x,R)$).

First consider $\mathit{SMALL}$. The cost of covering elements of $\mathit{SMALL}$ at scale $r$ is comparable to the cost of covering the corresponding elements of $Q$, so we will use the upper Assouad spectrum of $Q$ to obtain a bound. Since $2R \leq R_{Q,\beta,\epsilon/2}$, there exist at most $(2R/r)^{\uasp Q + \epsilon}$ balls of radius $r$ which cover $Q \cap B(x,2R)$. 
If $j \in \mathit{SMALL}$ then $Q \cap X_j \in B(x,2R)$, so the set of balls with the same centres and radii $3r$ will cover $\{ \, X_j : j \in \mathit{SMALL} \, \}$. By covering each of these larger balls with balls of radius $r$, 
\begin{equation}\label{e:smallcost}
 N_r\left( \bigcup_{j \in \mathit{SMALL}} X_j \right) \leq A_{d,3} 2^{d + \epsilon} (R/r)^{\uasp Q + \epsilon} \leq C_{\mathrm{small}} (R/r)^s.
 \end{equation}
The last inequality holds since $s > f(\theta,\theta) + \epsilon = \uasp Q + \epsilon$.

 Now consider $I_k$. We will bound the number of such cylinders using the upper Assouad spectrum of $Q$, and bound the cost of each using the upper box dimension of $F$. This is where the form of the function $f(\theta,\phi)$ comes from. Let $\phi_k \in [\theta_r,\beta)$ be such that $R^{1/\beta} 2^{-(k+1)} = R^{1/\phi_k}$. 
 For $0 \leq k \leq k_r$, 
 \begin{equation}\label{e:mediumcardinality}
 \begin{aligned}
  \# I_k &\leq \#  \{ \, j \in J : X_j \subset B(x,2R), \,  R^{1/\phi_k} \leq |X_j|  \leq R \, \} \\
 &\leq M N_{R^{1/\phi_k}} (Q \cap B(x,2R)) &&\text{($M$ is from \eqref{e:mdefn})} \\
 &\leq M \left( \frac{2R}{R^{1/\phi_k}}\right)^{\overline{\dim}_{\mathrm A}^{\phi_k} Q + \epsilon/2} &&\text{by Lemma~\ref{l:uniformconst}}\\
 &\leq M 2^{d + \epsilon/2} R^{(1-\phi_k^{-1})(\overline{\dim}_{\mathrm A}^{\phi_k} Q + \epsilon)} 2^{-(k+1)(\epsilon/2)} ,
 \end{aligned}
 \end{equation}
 where in the last line we use that $R^{1-\beta^{-1}} \geq 1$. 
  If $j \in I_k$ then by Lemma~\ref{l:assouadgeo}, 
\[ N_r(F_j) \leq D' N_{r/|X_j|} (F) \leq D' C_{\epsilon} (r/|X_j|)^{-(\ubd F + \epsilon)}
 \leq D' C_{\epsilon} 2^{d + \epsilon} R^{(\phi_k^{-1} - \theta_r^{-1})(\ubd F + \epsilon)}. \]
Therefore  
\begin{align}\label{e:mediumcost}
 \begin{split}
 N_r\left( \bigcup_{j \in \cup_{k=0}^{k_r} I_k} X_j \right) &\leq \sum_{k=0}^{k_r} N_r \left( \bigcup_{j \in I_k} X_j \right) \\
 &\leq C_{\mathrm{sum}} R^{(1-\phi_k^{-1})(\overline{\dim}_{\mathrm A}^{\phi_k} Q + \epsilon) + (\phi_k^{-1} - \theta_r^{-1})(\ubd F + \epsilon)} \\
 &\leq C_{\mathrm{sum}} (R/r)^s.
 \end{split}
 \end{align}
 The last inequality is since $s > \max_{\phi \in [\theta,1]} f(\theta,\phi) + 2\epsilon \geq f(\theta_r,\phi_k) + 2\epsilon$ by the definition of $\epsilon$ in~\eqref{e:defineeps} and Lemma~\ref{l:fbound}.

 Now consider $\mathit{MED}$. We will use that $\beta$ is close to 1 to show that the cardinality of $\mathit{MED}$ is not too large. We then use the upper box dimension of $F$ to bound the cost of each element of $\mathit{MED}$. As in~\eqref{e:mediumcardinality}, 
 \[ \# \mathit{MED} \leq M \left( \frac{2R}{R^{1/\beta} }\right)^{\overline{\dim}_{\mathrm A}^{\beta} Q + \epsilon} \leq M 2^{d + \epsilon} R^{(1-\beta^{-1})(d + \epsilon)}. \]
 If $j \in \mathit{MED}$ then 
 \[N_r(F_j) \leq D' N_{r/R}(F) \leq D' C_{\epsilon} (r/R)^{-(\ubd F + \epsilon)}. \] 
 Using the definition of $\beta$ in~\eqref{e:definebeta}, and since $r \leq R^{1/\theta}$, 
 \begin{equation}\label{e:medcost}
 N_r\left( \bigcup_{j \in \mathit{MED}} X_j \right) \leq M 2^{d + \epsilon} R^{(1-\beta^{-1})(d + \epsilon)} D' C_{\epsilon} (r/R)^{-(\ubd F + \epsilon)} \leq C_{\mathrm{med}} (R/r)^s. 
 \end{equation}

 Finally, consider $\mathit{BIG}$. The conformality and OSC give an absolute bound for the cardinality: $\# \mathit{BIG} \leq M$ from~\eqref{e:mdefn}. We now use conformality (through Lemma~\ref{l:assouadgeo}) to compare the cost of each piece with the cost of its (larger) preimage, which can be bounded using the inductive hypothesis. Indeed, if $j \in \mathit{BIG}$ then 
\begin{align*}
 N_r( B(x,R) \cap F_j) &\leq D' N_{r/||T_{j}'||}(T_{j}^{-1}(B(x,R) \cap F_j)) &\text{by Lemma~\ref{l:assouadgeo}}\\
 &\leq D' A C_{\mathrm{tot}}^{n-1} \left( \frac{DR/||T_{j}'||}{r/||T_{j}'||} \right)^s &\text{by inductive hypothesis} \\
 &= A C_{\mathrm{tot}}^{n-1} D' D^s (R/r)^s.
 \end{align*}
 We were able to apply the inductive hypothesis at the crucial step because $r \leq R^{1/\theta}$ and $||T_{j}'|| \leq \lambda \leq 1$ so 
 \[ r/||T_{j}'|| \leq (R/||T_{j}'||)^{1/\theta} \leq (RD/||T_{j}'||)^{1/\theta} \]
 and 
 \[ DR/||T_{j}'|| \geq R/\lambda \geq \lambda^{n-1}.\]
 Now, 
 \begin{equation}\label{e:bigcost} N_r\left( \bigcup_{j \in \mathit{BIG} } X_j \right) \leq A C_{\mathrm{big}} C_{\mathrm{tot}}^{n-1} (R/r)^s. 
 \end{equation}
 
 Putting together~\eqref{e:smallcost},~\eqref{e:mediumcost},~\eqref{e:medcost},~\eqref{e:bigcost} and using the definition of $C_{\mathrm{tot}}$ gives  
  \[ N_r(F \cap B(x,R)) \leq (C_{\mathrm{small}} + C_{\mathrm{sum}} + C_{\mathrm{med}} + A C_{\mathrm{big}} C_{\mathrm{tot}}^{n-1})(R/r)^s \leq A C_{\mathrm{tot}}^n (R/r)^s, \]
  completing the proof of the claim. 
\end{proof}

We now complete the proof of Theorem~\ref{t:mainasp}. If $n \in \N$, $\lambda^n \leq R \leq 1$, $x \in X$ and $0 < r \leq R^{1/\theta}$, then 
\begin{align}\label{e:exponentialgap}
N_r(F \cap B(x,R)) \leq A C_{\mathrm{tot}}^n (R/r)^s &\leq A C_{\mathrm{tot}}^n R^{(1-\theta^{-1})(s-t)} (R/r)^t \\ \nonumber
&\leq  A C_{\mathrm{tot}}^n \lambda^{n (1-\theta^{-1})(s-t)}(R/r)^t \\ \nonumber
&\leq A(R/r)^t. \nonumber
\end{align}
The last inequality is by the definition of $\lambda$ in~\eqref{e:lambdadef}. Thus $\uasp F \leq t$, as required.  
\end{proof}

In~\eqref{e:exponentialgap}, we exploited the exponential gap between the scales $r \leq R^{1/\theta}$ and $R$ that is in the definition of the Assouad spectrum but not the Assouad dimension. This allowed us to complete the proof of Theorem~\ref{t:mainasp} without assuming the separation condition that is assumed in Theorem~\ref{t:asd}.

\subsection{Consequences of Theorem \ref{t:mainasp}}\label{s:consequences}

We now consider what can be deduced from Theorem~\ref{t:mainasp} about the general form of the Assouad spectrum of infinitely generated self-conformal sets. Throughout Section~\ref{s:consequences}, $F$ will denote the limit set of a CIFS, $P$ will denote an arbitrary subset of $\cup_i S_i(X)$ intersecting $S_i(X)$ in exactly one point for each $i$, and $h = \dim_{\mathrm H} F$. 
Recall that the quasi-Assouad dimension is defined in~\eqref{e:quasiassouaddef}. 
\begin{cor}\label{c:qa}
We have $\qad F = \max\{h,\qad P\}$.
\end{cor}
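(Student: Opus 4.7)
The plan is simply to pass to the limit $\theta \to 1^-$ on both sides of the double inequality furnished by Theorem~\ref{t:mainasp}, exploiting the identity $\qad F = \lim_{\theta \to 1^-} \uasp F$ recorded in~\eqref{e:quasiassouaddef} (and the analogous identity for $P$). All the hard work has already been done, so the corollary amounts to a straightforward bookkeeping exercise about the function $f(\theta,\phi)$.

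For the lower bound, Theorem~\ref{t:mainasp} asserts that $\uasp F \geq \max\{h, \uasp P\}$ for every $\theta \in (0,1)$. Letting $\theta \to 1^-$ and using $\uasp P \to \qad P$ and $\uasp F \to \qad F$ immediately gives $\qad F \geq \max\{h, \qad P\}$.

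For the upper bound, the observation I would make is that for each fixed $\theta \in (0,1)$ and $\phi \in [\theta,1]$ the quantity $f(\theta,\phi)$ from~\eqref{e:fdefn} is a convex combination of $\overline{\dim}_{\mathrm A}^\phi P$ and $\ubd F$: the coefficients $(\phi^{-1}-1)/(\theta^{-1}-1)$ and $(\theta^{-1}-\phi^{-1})/(\theta^{-1}-1)$ are nonnegative and sum to $1$ throughout $\phi \in [\theta,1]$. Using the convention $\overline{\dim}_{\mathrm A}^1 P = \qad P$ from Section~\ref{s:notation}, we have $\overline{\dim}_{\mathrm A}^\phi P \leq \qad P$ for every such $\phi$, while $\ubd F = \max\{h, \ubd P\} \leq \max\{h, \qad P\}$ by~\cite[Theorem~3.5]{Banaji2021}. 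Hence $f(\theta,\phi) \leq \max\{h, \qad P\}$ uniformly, and Theorem~\ref{t:mainasp} then gives $\asp F \leq \max\{h,\qad P\}$ for every $\theta \in (0,1)$. Letting $\theta \to 1^-$ yields $\qad F \leq \max\{h, \qad P\}$, completing the proof.

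I do not anticipate any real obstacle; the only points to be careful about are the endpoint convention $\overline{\dim}_{\mathrm A}^1 = \qad$ at $\phi = 1$ and the fact that the upper bound in Theorem~\ref{t:mainasp} already holds for all $\theta$, so no uniformity in $\theta$ has to be reproved. In particular we do \emph{not} need the extra separation hypothesis that was used for Theorem~\ref{t:asd}.
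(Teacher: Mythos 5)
Your proposal is correct and is essentially the paper's own argument: the paper likewise observes that $\ubd F = \max\{h,\ubd P\}$ forces $\max_{\phi \in [\theta,1]} f(\theta,\phi) \leq \max\{h,\qad P\}$ and then lets $\theta \to 1^-$ in both bounds of Theorem~\ref{t:mainasp}. Your explicit remark that $f(\theta,\cdot)$ is a convex combination is just a slightly more detailed way of phrasing the same estimate.
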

\begin{proof}
Since $\ubd F = \max\{h,\ubd P\}$, we have $\max_{\phi \in [\theta,1]} f(\theta,\phi) \leq \max\{h,\qad P\}$ for all $\theta \in (0,1)$, so the result follows from Theorem~\ref{t:mainasp} upon letting $\theta \to 1^-$. 
\end{proof}

\begin{cor}%
If $h \geq \qad P$ then $\asp F = h$ for all $\theta \in (0,1)$. 
\end{cor}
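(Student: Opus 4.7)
The plan is to simply apply the sandwich already provided by Theorem~\ref{t:mainasp}, namely
\[ \max\{h,\uasp P\} \leq \asp F \leq \max_{\phi \in [\theta,1]} f(\theta,\phi), \]
and observe that under the hypothesis $h \geq \qad P$ both the lower and upper bounds collapse to $h$.

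For the lower bound, since $h = \dim_{\mathrm H} F \leq \ubd F \leq \asp F$ (or directly from Theorem~\ref{t:mainasp}), we immediately get $\asp F \geq h$. For the upper bound, the key observation is that the hypothesis $h \geq \qad P$ makes every quantity appearing in the definition~\eqref{e:fdefn} of $f(\theta,\phi)$ at most $h$. First, by~\cite[Theorem~3.5]{Banaji2021} we have $\ubd F = \max\{h,\ubd P\}$, and since $\ubd P \leq \qad P \leq h$ this reduces to $\ubd F = h$. Second, for every $\phi \in [\theta,1]$ we have $\uasp[\phi] P \leq \qad P \leq h$ (using that $\overline{\dim}_{\mathrm A}^1 P$ is by convention the quasi-Assouad dimension, as noted after~\eqref{e:quasiassouaddef}).

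Plugging these bounds into~\eqref{e:fdefn} gives
\[ f(\theta,\phi) \leq \frac{(\phi^{-1} - 1)h + (\theta^{-1} - \phi^{-1})h}{\theta^{-1} - 1} = h \]
for every $\phi \in [\theta,1]$, so $\max_{\phi \in [\theta,1]} f(\theta,\phi) \leq h$. Combining the two bounds gives $\asp F = h$ as claimed.

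There is no real obstacle here; the result is a direct corollary of Theorem~\ref{t:mainasp} once one unwinds the definition of $f$ and uses that $f(\theta,\phi)$ is a convex combination of $\uasp[\phi] P$ and $\ubd F$ with weights summing to $1$.
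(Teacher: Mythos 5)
Your proof is correct and takes essentially the same route as the paper: the paper deduces the result from Corollary~\ref{c:qa} (that $\qad F = \max\{h,\qad P\}$), and the proof of that corollary is precisely your computation, bounding $f(\theta,\phi)$ as a convex combination of quantities each at most $\max\{h,\qad P\}$ and invoking Theorem~\ref{t:mainasp}. You have simply inlined the intermediate step, so there is no substantive difference.
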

\begin{proof}
Immediate from Corollary~\ref{c:qa}. 
\end{proof}

\begin{cor}
If $\asp P = \ubd P$ for all $\theta \in (0,1)$ then $\asp F = \ubd F$ for all $\theta \in (0,1)$. 
\end{cor}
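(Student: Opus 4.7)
The plan is to combine the upper bound from Theorem~\ref{t:mainasp} with the hypothesis to show that $f(\theta,\phi) \leq \ubd F$ for every $\phi \in [\theta,1]$, which will force the right-hand side of the bound in Theorem~\ref{t:mainasp} to collapse down to $\ubd F$. The matching lower bound $\ubd F \leq \asp F$ is automatic from the general chain of inequalities recalled in Section~\ref{s:notation}.

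First I would observe that the hypothesis $\asp P = \ubd P$ for all $\theta \in (0,1)$, together with the identity $\overline{\dim}_{\mathrm A}^{\phi} P = \sup_{\phi' \in (0,\phi]} \dim_{\mathrm A}^{\phi'} P$ from~\cite{Fraser2019-3}, gives $\uasp[\phi] P = \ubd P$ for every $\phi \in (0,1)$. Moreover, by the convention $\overline{\dim}_{\mathrm A}^1 P = \qad P = \lim_{\theta \to 1^-} \asp P$ from~\eqref{e:quasiassouaddef}, this also forces $\overline{\dim}_{\mathrm A}^1 P = \ubd P$. Since $\ubd F = \max\{h, \ubd P\}$ by~\cite[Theorem~3.5]{Banaji2021}, we deduce $\overline{\dim}_{\mathrm A}^{\phi} P \leq \ubd F$ for every $\phi \in [\theta,1]$.

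Next I would plug this into the definition~\eqref{e:fdefn} of $f(\theta,\phi)$. Using $\overline{\dim}_{\mathrm A}^{\phi} P \leq \ubd F$, the numerator is bounded by $(\phi^{-1}-1)\ubd F + (\theta^{-1}-\phi^{-1})\ubd F = (\theta^{-1}-1)\ubd F$, and hence $f(\theta,\phi) \leq \ubd F$ for every $\phi \in [\theta,1]$. Taking the maximum over $\phi \in [\theta,1]$ and invoking Theorem~\ref{t:mainasp} yields $\asp F \leq \ubd F$, while the reverse inequality is immediate.

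There is no real obstacle here: the whole argument is a direct calculation using the bound already established in Theorem~\ref{t:mainasp}, and the only point requiring a little care is handling the endpoint $\phi = 1$ via the quasi-Assouad convention, so that the supremum defining $\uasp[\phi] P$ and the formula for $f(\theta,\phi)$ remain under control throughout the closed interval $[\theta,1]$.
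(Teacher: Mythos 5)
Your argument is correct and is essentially the paper's approach: the paper deduces the result from Corollary~\ref{c:qa} (i.e.\ $\qad F = \max\{h,\qad P\}$) together with $\ubd F = \max\{h,\ubd P\}$, and Corollary~\ref{c:qa} is itself proved by exactly the bound $\max_{\phi\in[\theta,1]} f(\theta,\phi) \leq \max\{h,\qad P\}$ that you verify inline. You have simply unrolled that intermediate corollary rather than citing it; the mathematics is the same.
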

\begin{proof}
This follows from Corollary~\ref{c:qa} and the fact that $\ubd F = \max\{h,\ubd P\}$. 
\end{proof}

Recall that the phase transition $\rho_F$ for the Assouad spectrum of a set $F$ is defined in~\eqref{e:definephasetransition}. 
\begin{cor}\label{c:phase}
If $h < \qad P$ then $\rho_F = \rho_P$. 
\end{cor}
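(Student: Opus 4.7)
\smallskip

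The plan is to use Corollary~\ref{c:qa} to reduce the statement to equality of two infima, and then prove each of the two inequalities $\rho_F\leq\rho_P$ and $\rho_P\leq\rho_F$ separately, using the lower and upper bounds from Theorem~\ref{t:mainasp} respectively. By Corollary~\ref{c:qa} and the hypothesis $h<\qad P$, we have $\qad F=\qad P$, so both phase transitions refer to the same target value.

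For $\rho_F\leq\rho_P$, I would pick any $\theta\in(\rho_P,1)$ and use the easy lower bound of Theorem~\ref{t:mainasp}: $\uasp F\geq\max\{h,\uasp P\}=\qad P=\qad F$. Combined with the trivial $\uasp F\leq\qad F$ this yields $\uasp F=\qad F$, so $\theta\geq\rho_F$; taking the infimum gives $\rho_F\leq\rho_P$ (the boundary case $\rho_P=1$ is trivial).

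The harder direction is $\rho_P\leq\rho_F$, and it is here that the upper bound of Theorem~\ref{t:mainasp} together with the structure of $f(\theta,\phi)$ does the work. For any $\theta\in(\rho_F,1)$, we have $\asp F=\qad F=\qad P$, so
\[\qad P\;\leq\;\max_{\phi\in[\theta,1]}f(\theta,\phi).\]
The key observation is that for each fixed $\theta$, $f(\theta,\phi)$ is a \emph{convex combination} of $\overline{\dim}_{\mathrm A}^{\phi}P$ and $\ubd F$ (the coefficients $(\phi^{-1}-1)/(\theta^{-1}-1)$ and $(\theta^{-1}-\phi^{-1})/(\theta^{-1}-1)$ are nonnegative and sum to $1$). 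Since $\overline{\dim}_{\mathrm A}^{\phi}P\leq\qad P$ for all $\phi$, and $\ubd F=\max\{h,\ubd P\}\leq\qad P$ using the hypothesis, this convex combination is $\leq\qad P$ for every $\phi\in[\theta,1]$, so in fact the maximum equals $\qad P$ and is attained at some $\phi^\star\in[\theta,1]$.

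The final step is a short case analysis on $\phi^\star$. If $\phi^\star=\theta$ then $f(\theta,\theta)=\uasp P=\qad P$ directly. Otherwise $\theta^{-1}-\phi^{\star-1}>0$, so in order for the convex combination to reach the common upper bound $\qad P$, the term $\ubd F$ must itself equal $\qad P$; together with $\ubd F=\max\{h,\ubd P\}$ and $h<\qad P$, this forces $\ubd P=\qad P$, and hence $\uasp P=\qad P$ for every $\theta\in(0,1)$. In either case $\theta\geq\rho_P$, so taking the infimum over $\theta>\rho_F$ yields $\rho_P\leq\rho_F$. The main obstacle, as usual with this type of result, is the second inequality: the trick is to recognise $f(\theta,\phi)$ as a convex combination so that the equality case of the trivial bound pins everything down.
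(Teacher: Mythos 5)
Your proof is correct and follows essentially the same route as the paper: one direction comes from the lower bound of Theorem~\ref{t:mainasp} together with Corollary~\ref{c:qa}, and the other from the upper bound combined with the observation that $f(\theta,\phi)$ is a convex combination of $\overline{\dim}_{\mathrm A}^{\phi} P$ and $\ubd F$, both of which are at most $\qad P$. The paper merely phrases the second direction contrapositively (for $\theta<\rho_P$ every value $f(\theta,\phi)$ is strictly below $\qad P$, whence $\uasp F<\qad F$), which is the same estimate as your equality-case analysis at the attained maximiser $\phi^\star$.
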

\begin{proof}
It follows from Lemma~\ref{l:samewithinlevelasd} and Corollary~\ref{c:qa} that $\rho_F \leq \rho_P$, so it remains to prove the reverse inequality. Fix $\theta \in (0,\rho_P)$, so $f(\theta,\theta) = \uasp P < \qad P$. 
If $\phi \in (\theta,1]$ then 
\[ f(\theta,\phi) \leq \frac{(\phi^{-1} - 1) \qad P + (\theta^{-1} - \phi^{-1}) \max\{h,\uasp P\}}{\theta^{-1} - 1}   < \qad P.\] 
Therefore $\uasp F < \qad P$ by Theorem~\ref{t:mainasp}, so $\rho_F \geq \theta$. It follows that $\rho_F \geq \rho_P$, as required. 
\end{proof}
In light of Corollary~\ref{c:phase}, when there is no confusion we will sometimes write $\rho$ for the common value $\rho_P = \rho_F$.

\begin{defn}\label{d:threeparam}
If $G \subset \Rd$ is non-empty and bounded then we say that the Assouad spectrum of $G$ has the \emph{three-parameter form} if either $\ubd G = \qad G$, or else 
\[
 \asp G = \min\left\{ \ubd G + \frac{(1-\rho_G) \theta}{(1-\theta)\rho_G} (\dim_\mathrm{qA} G - \ubd G) , \dim_\mathrm{qA} G \right\}
 \]
 for all $\theta \in [0,1)$. 
 \end{defn}
 The three parameters are the upper box dimension, the quasi-Assouad dimension, and the phase transition $\rho_G \in \left[1-\frac{\ubd G}{\dim_\mathrm{qA} G},1\right]$ (defined in~\eqref{e:definephasetransition}). 
The Assouad spectrum of many natural sets, such as polynomial sequences and spirals, Bedford--McMullen carpets, Kleinian limit sets and Julia sets happens to take the three-parameter form (see~\cite[Section~17.7]{Fraser2020book} and~\cite{FraserPreprintsullivan}). 
It is perhaps noteworthy that infinitely generated self-conformal sets do not necessarily satisfy the three-parameter form (as we will see in Theorem~\ref{t:sharp}). However, they sometimes will satisfy this form, as the following result shows.

\begin{cor}\label{c:special}
Assume that the Assouad spectrum of $P$ is non-constant and has the three-parameter form as in Definition~\ref{d:threeparam}. 
Then the upper bound of Theorem~\ref{t:mainasp} is the three-parameter form for $F$, namely  
\begin{equation}\label{e:specialubf}
 \asp F \leq \max_{\phi \in [\theta,1]} f(\theta,\phi) = \min\left\{ \ubd F + \frac{(1-\rho_F) \theta}{(1-\theta)\rho_F} (\dim_\mathrm{qA} F - \ubd F) , \dim_\mathrm{qA} F \right\}. 
 \end{equation}
In particular, if $h \leq \ubd P$ then the bounds in Theorem~\ref{t:mainasp} coincide, with $\asp F = \asp P$ for all $\theta \in [0,1]$. 
If, on the other hand, we have $\ubd P < h < \qad P$, then the upper and lower bounds of Theorem~\ref{t:mainasp} differ for all $\theta \in (0,\rho_P)$. 
\end{cor}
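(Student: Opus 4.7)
The plan is to compute $\max_{\phi \in [\theta,1]} f(\theta,\phi)$ explicitly and then compare the resulting upper bound with the lower bound $\max\{h, \uasp P\}$ from Theorem \ref{t:mainasp}. Throughout, I use $\ubd F = \max\{h,\ubd P\}$, $\qad F = \max\{h,\qad P\}$ from Corollary \ref{c:qa}, and $\rho_F = \rho_P$ from Corollary \ref{c:phase} (which applies in both sub-cases $h \leq \ubd P$ and $\ubd P < h < \qad P$, since the three-parameter assumption together with non-constancy forces $\ubd P < \qad P$). Since the three-parameter form is monotonic in $\phi$, the upper Assouad spectrum of $P$ coincides with its Assouad spectrum at every $\phi \in (0,1)$.

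The main computation rewrites $f$ in the form
\[ f(\theta,\phi) = \ubd F + \frac{(\phi^{-1}-1)\bigl(\overline{\dim}_{\mathrm A}^\phi P - \ubd F\bigr)}{\theta^{-1}-1} \]
and analyses $g(\phi) := (\phi^{-1}-1)(\overline{\dim}_{\mathrm A}^\phi P - \ubd F)$ on $[\theta,1]$. Writing $c := (1-\rho_P)(\qad P - \ubd P)/\rho_P$ and using the three-parameter form for $P$, one finds $g(\phi) = c + (\phi^{-1}-1)(\ubd P - \ubd F)$ in the linear regime $\phi \in [\theta,\rho_P]$ and $g(\phi) = (\phi^{-1}-1)(\qad P - \ubd F)$ in the constant regime $\phi \in [\rho_P,1]$. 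Hence, if $\ubd F = \ubd P$ (i.e.\ $h \leq \ubd P$), then $g$ is constant on the linear regime and weakly decreasing on the constant regime, while if $\ubd F = h > \ubd P$, then $g$ is strictly increasing on the linear regime and strictly decreasing on the constant regime, so attains its maximum at $\phi = \rho_P$ whenever $\rho_P \geq \theta$. Substituting these extrema back into $f$ and using $\rho_F = \rho_P$ together with $\qad F - \ubd F = \qad P - \ubd F$, the maximum simplifies in every case to
\[ \min\left\{ \ubd F + \tfrac{(1-\rho_F)\theta}{(1-\theta)\rho_F}(\qad F - \ubd F),\ \qad F \right\}, \]
which is the three-parameter form for $F$ and proves \eqref{e:specialubf}.

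For the sub-case $h \leq \ubd P$, we have $\ubd F = \ubd P$ and $\qad F = \qad P$, so the upper bound coincides with the three-parameter form for $P$, which in turn equals $\uasp P$. Because $\uasp P \geq \ubd P \geq h$, the lower bound $\max\{h,\uasp P\}$ also equals $\uasp P$, matching the upper bound and forcing $\asp F = \asp P$ via Theorem \ref{t:mainasp}.

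For the sub-case $\ubd P < h < \qad P$ and $\theta \in (0,\rho_P)$, the upper bound is $h + \frac{(1-\rho_P)\theta}{(1-\theta)\rho_P}(\qad P - h)$, which strictly exceeds $h$ since $\qad P > h$ and $\theta > 0$. The difference between this upper bound and $\uasp P = \ubd P + \frac{(1-\rho_P)\theta}{(1-\theta)\rho_P}(\qad P - \ubd P)$ equals $(h - \ubd P)\bigl(1 - \frac{(1-\rho_P)\theta}{(1-\theta)\rho_P}\bigr)$, which is positive because $h > \ubd P$ and $\frac{(1-\rho_P)\theta}{(1-\theta)\rho_P} < 1$ when $\theta < \rho_P$. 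Therefore the upper bound strictly exceeds both $h$ and $\uasp P$, hence strictly exceeds $\max\{h,\uasp P\}$, as required. The main obstacle will be the careful bookkeeping in the piecewise analysis of $g(\phi)$ at the break point $\phi = \rho_P$ and the verification that the maxima in each sub-case fit into the piecewise definition of the three-parameter form for $F$; everything else is elementary once the formula for $f$ is rearranged.
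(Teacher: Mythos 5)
Your argument is correct and follows essentially the same route as the paper: both compute $\max_{\phi\in[\theta,1]} f(\theta,\phi)$ explicitly by substituting the three-parameter form of $\overline{\dim}_{\mathrm A}^\phi P$ and splitting into cases according to where $h$ sits relative to $\ubd P$ and $\qad P$, with your rearrangement $f(\theta,\phi)=\ubd F+(\theta^{-1}-1)^{-1}g(\phi)$ being a cleaner organisation of the paper's ``direct calculation''. The one (degenerate) case you omit is $h\ge \qad P$, which the paper dispatches separately as its Case~1 and which your analysis does not cover (there $\overline{\dim}_{\mathrm A}^\phi P\le \ubd F$ for all $\phi$, so $g\le 0=g(1)$, the maximum is $f(\theta,1)=h=\ubd F=\qad F$, and your claim that $g$ decreases on the constant regime and hence peaks at $\rho_P$ would fail); aside from this trivial boundary case, everything checks out.
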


\begin{proof}
\emph{Case 1:} If $h \geq \qad P$ then clearly $\max_{\phi \in [\theta,1]} f(\theta,\phi) = f(\theta,1) = h$. %

\emph{Case 2:} If $h \leq \ubd P = \qad P$ then $\max_{\phi \in [\theta,1]} f(\theta,\phi) = f(\theta,\theta) = \ubd P$. 

\emph{Case 3:} If $h \leq \ubd P < \qad P$ then $\max_{\phi \in [\theta,1]} f(\theta,\phi) = f(\theta,\phi') = \uasp P$ for all $\phi' \in [\theta,\rho_P]$, and the bounds coincide. 

\emph{Case 4:} If $\ubd P < h < \qad P$ then a direct calculation shows that for $\theta \in (0,\rho_P)$, 
\[ \max_{\phi \in [\theta,1]} f(\theta,\phi) = f(\theta,\rho_P) = h + \frac{(1-\rho_P) \theta}{(1-\theta)\rho_P}(\qad P - h) > \asp P.\]%
We see that in all cases $\asp F$ has the three-parameter form. 
\end{proof}

One setting where Corollary~\ref{c:special} is applicable is where the maps are concentrated around the sets $F_p \coloneqq \{ \, i^{-p} : i \in \N \, \}$, where $p \in (0,\infty)$ is a constant. Fraser and Yu~\cite[Corollary~6.4]{Fraser2018-2} showed that 
\begin{equation}\label{e:fpspectrum}
 \asp F_p = \min\left\{ \frac{1}{(1+p)(1-\theta)},1\right\}.
 \end{equation}
This is the three-parameter form with $\ubd F_p = (p+1)^{-1}$, $\qad F_p = 1$, $\rho_{F_p} = \frac{p}{p+1}$. 
\begin{lma}\label{l:existcifstechnical}
Let $p \in (0,\infty)$, $t \in [p+1,\infty)$ and $h \in (1/t,1)$. Then there exists a CIFS on $X=[0,1]$ with $I=\{2,3,4,\ldots\}$ %
and limit set $F$ such that the following hold: 
\begin{itemize}
\item For all $i \in \N$, $i \geq 2$ there exists $c_i \in (0,(i-1)^{-p} - i^{-p})$ such that $S_i(x) = c_i x + i^{-p}$ for all $x \in [0,1]$, so $S_i$ is a similarity map with contraction ratio $c_i$ and $i^{-p} = S_i(0) < S_i(1) \leq (i-1)^{-p}$. 
\item There exists $N \in \N$ such that $c_i = p i^{-t}$ for all $i \geq N$. %
\item $\dim_\mathrm{H} F = h$
\end{itemize} 
\end{lma}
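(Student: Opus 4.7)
\emph{Plan.} The maps $S_i(x) = c_i x + i^{-p}$ are similarities, and the constraint $c_i < (i-1)^{-p} - i^{-p}$ ensures that the images $S_i([0,1]) = [i^{-p}, i^{-p} + c_i]$ lie in the pairwise disjoint intervals $[i^{-p}, (i-1)^{-p}]$. Consequently, provided $\sup_i c_i < 1$, the OSC, cone condition, conformality and BDP in Definition~\ref{d:cifs} all hold automatically, and the task reduces to choosing the $c_i$ to realise the three bullet points simultaneously. A Taylor/Bernoulli expansion gives
\[ (i-1)^{-p} - i^{-p} = i^{-p} \bigl((1 - 1/i)^{-p} - 1\bigr) > p\, i^{-(p+1)} \geq p\, i^{-t} \quad \text{for all } i \geq 2, \]
using $t \geq p+1$, so the choice $c_i = p\, i^{-t}$ for $i \geq N$ is compatible with the constraint for every $N \geq 2$.

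For the Hausdorff dimension, since the maps are similarities $\|S_w'\| = \prod_k c_{w_k}$, and hence $\psi_n(s) = \phi(s)^n$ with $\phi(s) \coloneqq \sum_{i \geq 2} c_i^s$, so $\overline{P}(s) = \log \phi(s)$. The function $\phi$ is strictly decreasing and continuous on $(1/t, \infty)$ with $\phi(s) \to \infty$ as $s \to (1/t)^+$, so by~\eqref{e:hausdorffpressure} the condition $\dim_{\mathrm H} F = h$ is equivalent to the single equation $\phi(h) = 1$. Writing $T_N \coloneqq p^h \sum_{i \geq N} i^{-th}$ (the contribution of the prescribed tail) and $M_N \coloneqq \sum_{i=2}^{N-1} ((i-1)^{-p} - i^{-p})^h$ (the supremum of what the remaining terms can contribute), the problem becomes: choose $c_i$ for $2 \leq i < N$ satisfying $c_i < (i-1)^{-p} - i^{-p}$ and $\sum_{i=2}^{N-1} c_i^h = 1 - T_N$.

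The main point --- where the two hypotheses on $h$ are both used --- is to show that $T_N < 1$ and $M_N > 1 - T_N$ for some $N$. Since $th > 1$, we have $T_N \to 0$; since $h < 1$ and each $(i-1)^{-p} - i^{-p} \in (0,1)$, the elementary inequality $x^h > x$ combined with the telescoping identity $\sum_{i \geq 2}\bigl((i-1)^{-p} - i^{-p}\bigr) = 1$ yields $M_\infty > 1$, whence $M_N > 1 - T_N$ for all sufficiently large $N$. For such an $N$ the choice $c_i \coloneqq \lambda \bigl((i-1)^{-p} - i^{-p}\bigr)$ with $\lambda \coloneqq \bigl((1 - T_N)/M_N\bigr)^{1/h} \in (0,1)$ gives $\phi(h) = \lambda^h M_N + T_N = 1$ while preserving the strict inequality $c_i < (i-1)^{-p} - i^{-p}$; moreover $\sup_i c_i \leq \max\{\lambda(1 - 2^{-p}),\, p\, N^{-t}\} < 1$, so uniform contraction holds and the verification of Definition~\ref{d:cifs} is complete. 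No substantial obstacle arises, but this last calibration step is the only one requiring care: it is precisely the simultaneous availability of $T_N \to 0$ and $M_\infty > 1$ that rules out the boundary cases $h = 1/t$ and $h = 1$.
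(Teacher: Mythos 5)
Your proof is correct and follows essentially the same route as the paper: verify via a mean value/Taylor bound that $c_i = p\,i^{-t}$ is admissible for $t \geq p+1$, then use the Moran/pressure equation $\sum_i c_i^h = 1$ together with the facts that the prescribed tail contribution tends to $0$ (since $th>1$) and the maximal head contribution exceeds $1$ (since $h<1$, via $x^h > x$ and telescoping) to calibrate the first finitely many ratios. The only cosmetic difference is that you exhibit an explicit scaling factor $\lambda$ where the paper invokes the intermediate value theorem.
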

In fact any CIFS which satisfies the first two conditions will have finiteness parameter $\Theta = 1/t$ and $\dim_\mathrm{H} F \in (1/t,1]$. 

\begin{proof}
By a mean value theorem argument, $(i-1)^{-p} - i^{-p} > p i^{-(p+1)}$ for all $i \in \N$, $i \geq 2$. By~\eqref{e:hausdorffpressure} we have $h = \inf\{\, s \geq 0 : \sum_{i=2}^\infty c_i^s < 1 \, \}$. Therefore since $1/t < h < 1$, we can choose $N \geq 2$ sufficiently large that $p^h \sum_{i=N+1}^\infty  i^{-th} < 1$ %
and $\sum_{i=1}^N ((i-1)^{-p} - i^{-p})^h \geq 1$. %
By an intermediate value theorem argument, there exist $c_j \in (0,(j-1)^{-p} - j^{-p})$ for $1 \leq j \leq N$ such that 
\[ \sum_{j=1}^N c_j^h + p^h \sum_{i=N+1}^\infty  i^{-th} = 1.\]  
It now follows from~\cite[Theorem~3.15]{Mauldin1996} that $\dim_{\mathrm H} F = h$. 
\end{proof}

We now use Lemma~\ref{l:existcifstechnical} to give an example with $\ubd F = \ubd F_p$ and the Assouad spectrum satisfying the three-parameter form. 
\begin{cor}
Consider a CIFS satisfying the three conditions in Lemma~\ref{l:existcifstechnical} with $t>p+1$ and $h = \dim_\mathrm{H} F \in (1/t,(p+1)^{-1}]$. Then for all $\theta \in [0,1]$,  
\[\asp F = \asp F_p = \min\left\{ \frac{1}{(1+p)(1-\theta)},1\right\}. \]%
\end{cor}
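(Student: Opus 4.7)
The plan is to reduce this directly to Corollary~\ref{c:special}, since the CIFS at hand has been engineered so that its set of fixed points is, up to a single point, the well-studied sequence $F_p$. The Fraser--Yu formula~\eqref{e:fpspectrum} already supplies the Assouad spectrum of $F_p$ in the required three-parameter form, so essentially no new work is needed beyond lining up hypotheses.

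The first step is to choose the natural candidate $P \coloneqq \{\, S_i(0) : i \in I \,\} = \{\, i^{-p} : i \geq 2\,\}$. This is a valid choice of $P$ since $S_i(X) = [i^{-p}, i^{-p} + c_i]$ contains exactly one point of $P$ for each $i \in I$. Observe that $P = F_p \setminus \{1\}$, so $P$ differs from $F_p$ by a single point. Since $\asp$, $\ubd$ and $\qad$ are all invariant under the addition or removal of finitely many points, one has $\asp P = \asp F_p$, $\ubd P = (p+1)^{-1}$, and $\qad P = 1$. By~\eqref{e:fpspectrum}, $\asp P$ is non-constant and takes the three-parameter form of Definition~\ref{d:threeparam} with phase transition $\rho_P = p/(p+1)$; this is the three-parameter hypothesis of Corollary~\ref{c:special}.

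It remains to verify the hypothesis $h \leq \ubd P$, but this is immediate from the assumption $h \in (1/t, (p+1)^{-1}]$ together with $\ubd P = (p+1)^{-1}$. The ``$h \leq \ubd P$'' clause of Corollary~\ref{c:special} then asserts that the upper and lower bounds of Theorem~\ref{t:mainasp} coincide and equal $\asp P$, yielding $\asp F = \asp P = \asp F_p$ for every $\theta \in [0,1]$, which is precisely the claimed formula. There is no genuine obstacle in this argument: the heavy lifting was done in Theorem~\ref{t:mainasp} and Corollary~\ref{c:special}, and the only routine verifications are the dimension invariances under removing a single point and the identification of $P$ with a cofinite subset of $F_p$.
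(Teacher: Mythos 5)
Your proposal is correct and follows exactly the paper's route: the paper's proof is simply ``immediate from Corollary~\ref{c:special}'', and your write-up supplies the routine verifications (identifying $P$ with $F_p$ minus a point, invoking~\eqref{e:fpspectrum} for the three-parameter form, and checking $h \le \ubd P = (p+1)^{-1}$) that make that reduction explicit.
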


\begin{proof}
This is immediate from Corollary~\ref{c:special}. 
\end{proof}

\section{Sharpness of the bounds and attainable forms of Assouad spectra}\label{s:sharp}

Theorem~\ref{t:sharp} provides a family of examples with $\dim_{\mathrm B} F = \dim_\mathrm{H} F \eqqcolon h$ which show in particular that the bounds in Theorem~\ref{t:mainasp} are sharp in general. %
The graph of the Assouad spectrum for a certain choice of parameters is shown in Figure~\ref{f:sharp}.

\begin{thm}\label{t:sharp}
Consider a CIFS satisfying the three conditions in Lemma~\ref{l:existcifstechnical} with $h \in ((p+1)^{-1},1)$. There are three different cases depending on the parameter $t$: 

\begin{enumerate}
\item\label{i:sharpupper} If $t=p+1$ then 
\begin{equation}\label{e:sharpupperspecial}
  \nonumber \asp F = \left\{\begin{array}{lr}
        h + \frac{\theta}{p(1-\theta)}(1-h), & \text{for } 0\leq \theta < \frac{p}{1+p} \\
        1, & \text{for } \frac{p}{1+p}\leq \theta \leq 1
        \end{array}\right. . 
\end{equation}%

\item\label{i:sharpmiddle} If $p+1 < t < p+h^{-1}$ then 
\[ \asp F = 
\left\{\begin{array}{lr}
        h + \frac{\theta}{p(1-\theta)}(1-h(t-p)), & \text{for } 0\leq \theta \leq \frac{(h+hp-1)p}{(1+p)(ht-1)}\\
        \frac{1}{(1+p)(1-\theta)}, & \text{for } \frac{(h+hp-1)p}{(1+p)(ht-1)} < \theta < \frac{p}{1+p}\\
        1, & \text{for } \frac{p}{1+p}\leq \theta \leq 1
        \end{array}\right. .\]

\item\label{i:sharplower} If $t \geq p + h^{-1}$ then 
\[ \asp F = 
    \left\{\begin{array}{lr}
        h, & \text{for } 0\leq \theta \leq \frac{h+hp-1}{h(1+p)}\\
        \frac{1}{(1+p)(1-\theta)}, & \text{for } \frac{h+hp-1}{h(1+p)} < \theta < \frac{p}{1+p}\\
        1, & \text{for } \frac{p}{1+p}\leq \theta \leq 1
        \end{array}\right. . \]
\end{enumerate}
\end{thm}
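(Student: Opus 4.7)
The overall strategy is to pin down $N_r(B(x,R) \cap F)$ with $r = R^{1/\theta}$ by a direct cylinder count, exploiting that the worst case is $x = 0$, where infinitely many cylinders of arbitrarily small size accumulate. Taking $P = \{S_i(0) : i \in I\} = \{i^{-p} : i \geq 2\}$ and recalling that $\ubd F = \max\{h, \ubd P\} = h$ (since $h > 1/(p+1) = \ubd P$) with $\lbd F = h$ forced by $\dim_{\mathrm{H}} F \leq \lbd F \leq \ubd F$, the analysis partitions the cylinders intersecting $B(0,R)$ into \emph{large} ($c_i > R$), \emph{medium} ($r < c_i \leq R$, equivalently $R^{-1/p} \lesssim i \lesssim (p/r)^{1/t}$), and \emph{small} ($c_i \leq r$) and handles each type separately. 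Throughout, the specific form $c_i = p i^{-t}$ together with $\uasp P = \min\{1/((1+p)(1-\theta)), 1\}$ from~\eqref{e:fpspectrum} drives the computation.

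For the upper bound, the large cylinders number $\leq M$ by~\eqref{e:mdefn}, each contributing $\lesssim (R/r)^h$ via Lemma~\ref{l:assouadgeo}; the medium cylinders contribute
\[
\sum_i N_r(S_i(F)) \lesssim r^{-h} p^h \sum_{i \geq R^{-1/p}} i^{-th} \lesssim r^{-h} R^{(th-1)/p},
\]
using that the sum converges since $th > 1$, which translates in the $(R/r)^\alpha$ form to exponent $\alpha_1(\theta) := h + \theta(1 - h(t-p))/(p(1-\theta))$; the small cylinders have centres accumulating in an interval of length $\sim r^{p/(p+1)}$ around $0$ and are covered by $\lesssim r^{-1/(p+1)}$ balls, giving exponent $\alpha_2(\theta) := 1/((1+p)(1-\theta))$. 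A short location-based check shows that for $x > 0$ the covering cost is asymptotically bounded by the $x = 0$ cost (since the density of cylinders decreases as one moves away from $0$), yielding $N_r(B(x,R) \cap F) \lesssim (R/r)^{\max\{h, \alpha_1, \alpha_2\}}$, complemented by the trivial bound $\asp F \leq 1$.

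For the lower bound at $x = 0$, the OSC-separated medium cylinders each require $\gtrsim (c_i/r)^h$ balls (using $\lbd F = h$), summing to $\gtrsim (R/r)^{\alpha_1(\theta)}$; the small-cylinder accumulation fills $[0, \sim r^{p/(p+1)}]$ up to gaps $\lesssim r$, forcing $\gtrsim r^{-1/(p+1)}$ distinct balls and giving $\gtrsim (R/r)^{\alpha_2(\theta)}$. For $\theta \geq p/(p+1)$, the bound $\uasp P = 1$ from~\eqref{e:fpspectrum} combined with Theorem~\ref{t:mainasp} forces $\asp F \geq 1$, matching the trivial upper bound to give $\asp F = 1$.

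The trichotomy then arises from the competition among $h$, $\alpha_1(\theta)$, and $\alpha_2(\theta)$: when $t = p+1$ one has $1 - h(t-p) = 1-h$ and $\alpha_1 \geq \alpha_2$ throughout $\theta \in (0, p/(p+1)]$ with both reaching $1$ at $\theta = p/(p+1)$, producing case~\eqref{i:sharpupper}; when $p+1 < t < p + h^{-1}$ the condition $h(t-p) < 1$ keeps $\alpha_1 > h$, and the crossover $\alpha_1(\theta) = \alpha_2(\theta)$ at $\theta = p(h(p+1)-1)/((p+1)(ht-1))$ partitions $(0, p/(p+1))$ into an $\alpha_1$-regime and an $\alpha_2$-regime, producing case~\eqref{i:sharpmiddle}; when $t \geq p + h^{-1}$ one has $\alpha_1 \leq h$, so only $h$ and $\alpha_2$ compete, with crossing $h = \alpha_2$ at $\theta = (h(p+1)-1)/(h(p+1))$, producing case~\eqref{i:sharplower}. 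The main obstacle is establishing the tight upper bound in cases~\eqref{i:sharpmiddle} and~\eqref{i:sharplower}: Theorem~\ref{t:mainasp} applied at the peak scale $\phi = p/(p+1)$ would overcount for $t > p+1$ because it uses $\uasp P$ while ignoring the correlation $c_i = p i^{-t}$ between index $i$ (which governs the centre position $i^{-p}$) and cylinder size, so the direct evaluation of $\sum c_i^h$ rather than the generic $\uasp P$-bound is precisely what produces the crucial $R^{(th-1)/p}$ factor distinguishing cases~\eqref{i:sharpmiddle}--\eqref{i:sharplower} from case~\eqref{i:sharpupper}.
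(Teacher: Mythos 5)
Your cylinder counts at the accumulation point are correct and match the paper's: the medium-cylinder sum $\sum_i N_r(S_i(F)) \lesssim r^{-h}R^{(th-1)/p}$ giving $\alpha_1(\theta)$, the small-cylinder count giving $\alpha_2(\theta) = \frac{1}{(1+p)(1-\theta)}$, the lower bound via $\sum (ri^t)^{-(h-\epsilon)}$ over $\lfloor r^{-\theta/p}\rfloor \le i \le \lfloor r^{-1/t}\rfloor$, and the resulting trichotomy from the competition among $h$, $\alpha_1$, $\alpha_2$. You also correctly identify why Theorem~\ref{t:mainasp} overcounts for $t>p+1$. However, there is a genuine gap in the upper bound: the reduction from arbitrary $x\in F$ to $x=0$. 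Your justification — ``a short location-based check\ldots since the density of cylinders decreases as one moves away from $0$'' — does not work as stated, for two reasons. First, the accumulation structure at $0$ recurs at every point $S_w(0)$, $w\in I^*$, at the scale of the cylinder $F_w$; a ball $B(x,R)$ with $x$ deep inside some $F_w$ sees exactly the same pile-up of subcylinders. The paper handles this by locating the unique deepest word $w$ with $F_w\cap B\neq\varnothing$ and $|F_w|\ge cR$ (so that $B\cap F=B\cap F_w$), pulling back by $S_w^{-1}$, and observing that the pulled-back configuration corresponds to a \emph{different} exponent $\theta'\le\theta$ — so one must also verify that the claimed formula is increasing in $\theta$ before the renormalisation closes. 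Your proposal never invokes this monotonicity, and without it the reduction is circular.

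Second, even after renormalising, the interval $B$ need not contain the accumulation point: it can sit at distance $r^{\eta}$ from $0$ for any $\eta\in\left(\tfrac{\theta p}{p+1},\theta\right]$, in which case the number of medium cylinders meeting $B$ is
\[
r^{-\eta/p}\bigl(1-(1+r^{\theta-\eta})^{-1/p}\bigr)\;\approx\; r^{-\eta/p+\theta-\eta},
\]
a Taylor-expansion count whose dependence on $\eta$ is not captured by any monotone ``density'' heuristic. One must check that the resulting exponent $\frac{\eta(p+1-th)+ph-p\theta}{p(1-\theta)}$ is dominated by $\max\{h,\alpha_1(\theta)\}$ uniformly over admissible $\eta$, with the two regimes $th\ge p+1$ and $th<p+1$ treated separately; this position-dependent analysis (the parameter $\eta$ and Subcases 1.1/1.2 of the paper's Lemma~\ref{l:sharplemma}) is the technical heart of the sharp upper bound in cases~\eqref{i:sharpmiddle} and~\eqref{i:sharplower}, and it is absent from your argument. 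The lower bounds and the case analysis are fine.
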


By Corollary~\ref{c:special}, the bounds from Theorem~\ref{t:mainasp} (or from Corollary~\eqref{c:special}) differ if $0<\theta< \rho_{F_p} = \frac{p}{1+p}$. 
Moreover, in~\eqref{i:sharpupper} the upper bounds are attained, in~\eqref{i:sharpmiddle} $\asp F$ lies strictly between the bounds for all $\theta \in (0,\frac{(h+hp-1)p}{(1+p)(ht-1)})$, and in~\eqref{i:sharplower} the lower bounds are attained. We note that the formula in~\eqref{i:sharplower} does not depend on the precise value of $t \in [p+h^{-1},\infty)$.

\begin{figure}[ht]
\center{\includegraphics[width=0.6\textwidth]
        {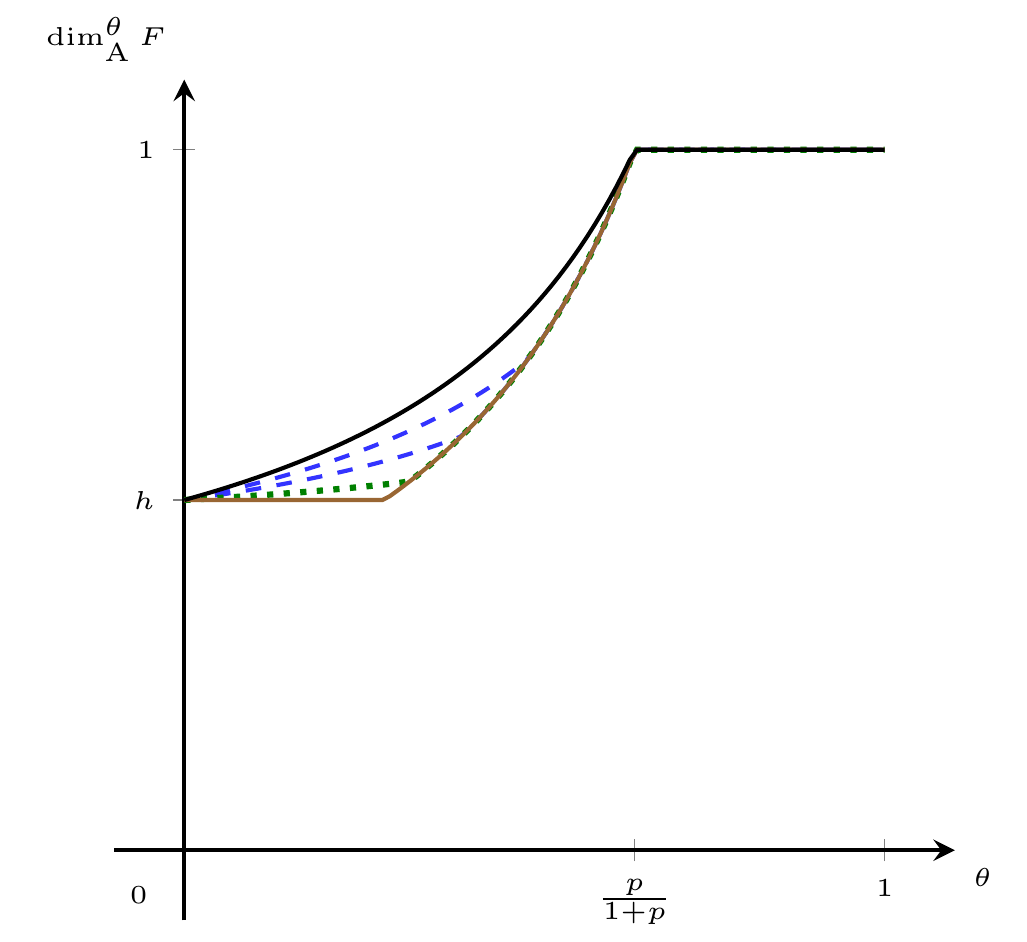}}
        \caption{\label{f:sharp}
        Assouad spectra of the sets in Theorem~\ref{t:sharp} for $p=1.8$ and $h \approx 0.5$. In black: the upper bound (attained when $t=p+1$). In \textcolor{brown!80!black}{brown}: the lower bound (attained when $t\geq p+h^{-1}$). In dashed \textcolor{blue!80}{blue:} some more choices of $t \in (p+1,p+h^{-1})$. In dotted \textcolor{green!50!black}{green:} the case $t=2p$, which is also the dimension of the continued fraction set from Proposition~\ref{p:ctdspaced}. %
 }
 
\end{figure}

We now give a technical lemma that forms the main part of the proof of the upper bound of Theorem~\ref{t:sharp}~\eqref{i:sharpmiddle}. 
\begin{lma}\label{l:sharplemma}
Consider a CIFS satisfying the conditions of Lemma~\ref{l:existcifstechnical} with $(p+1)^{-1} < h < 1$ and $p+1 < t < p + h^{-1}$. 
Since for each $i \geq 2$, $S_i([0,1])$ and $S_{i+1}([0,1])$ are sufficiently well separated, we can choose $c>0$ large enough that for any interval $B$ (of length $R$, say) there exists at most one $i \in I$ such that $|F_i| \geq c R$ and $F_i \cap B \neq \varnothing$. 
Let $s$ be any number greater than the claimed formula for $\asp F$ in Theorem~\ref{t:sharp}~\eqref{i:sharpmiddle}. Then if $0 < r < 1$ and $B$ is any interval of length $r^\theta$ such that whenever $i \in I$ is such that $B \cap F_i \neq \varnothing$ we have $|F_i| \leq cr^\theta$, then 
\[ N_r(B \cap F) \lesssim r^{(\theta - 1)s}, \]
where the implicit constant in $\lesssim$ can depend on $p,h,t, F,s$ but not on $r$ or $B$. 
\end{lma}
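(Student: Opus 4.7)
The plan is to bound $N_r(B \cap F)$ by explicitly enumerating the cylinders $F_i$ meeting $B$ and decomposing them by size, exploiting that for $i \geq N$ one has $|F_i| \approx p i^{-t}$ and fixed point $i^{-p}$. Set $R = r^\theta$, write $x$ for the centre of $B$, and split cylinders into \emph{large} ($|F_i| > r$, equivalently $i \lesssim r^{-1/t}$), each covered by $\lesssim (|F_i|/r)^{h + \eps}$ balls via Lemma~\ref{l:assouadgeo} and $\ubd F = h$, and \emph{small} ($|F_i| \leq r$), each needing one ball, with possible grouping for accumulating families.

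The main case is $x \lesssim R$, where $B$ contains a neighbourhood of the origin and every $F_i$ with $i \gtrsim R^{-1/p}$ effectively lies in $B$. The large-cylinder contribution becomes $\lesssim r^{-h} \sum_{i \geq R^{-1/p}} i^{-ht}$, and since $ht \geq h(p+1) > 1$ the series converges and is dominated by its first term, giving $\lesssim r^{-h} R^{(ht-1)/p}$. For small cylinders one uses that the fixed points $i^{-p}$ are mutually $r$-close as soon as $i \gtrsim r^{-1/(p+1)}$, so covering the cluster $[0, r^{p/(p+1)}]$ costs $\lesssim r^{-1/(p+1)}$ balls; when $\theta \geq p/(p+1)$ one instead covers $B$ itself directly with $\lesssim R/r$ balls.

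In the complementary regime $x \gtrsim R$, the indices $i$ with $F_i \cap B \neq \varnothing$ form a short run around $j := x^{-1/p}$ of size $\lesssim R x^{-(p+1)/p}$, with $|F_i| \approx p x^{t/p}$. For each sub-case (large or small cylinders near $x$, well-separated or clustered fixed points), a direct computation shows that the resulting cover count is at most one of the three quantities arising in the main case above, with the negative exponent $(ht-p-1)/p < 0$ (forced by $t < p+h^{-1}$) playing the key role in controlling the large-cylinder count, and with the inequality $R^{-1/p} \leq r^{-1/(p+1)}$ (valid for $\theta \leq p/(p+1)$) handling the well-separated small-cylinder count.

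Finally, matching exponents against $(R/r)^s = r^{-(1-\theta)s}$ yields: $r^{-h} R^{(ht-1)/p} \lesssim (R/r)^s$ precisely when $s \geq h + \frac{\theta(1 - h(t-p))}{p(1-\theta)}$ (the first piece of the formula); $r^{-1/(p+1)} \lesssim (R/r)^s$ precisely when $s \geq \frac{1}{(p+1)(1-\theta)}$ (the middle piece); and $R/r \lesssim (R/r)^s$ precisely when $s \geq 1$ (the third piece). Since $s$ is strictly greater than the relevant piece of the formula by hypothesis, each contribution fits inside the claimed bound. The main obstacle will be the bookkeeping itself: tracing which size class and sub-case dominates as $\theta$ varies through the intervals $[0, \theta_1]$, $(\theta_1, p/(p+1))$, and $[p/(p+1), 1)$, and verifying that the phase transitions $\theta_1 = \frac{p(h(p+1)-1)}{(p+1)(ht-1)}$ and $p/(p+1)$ in the claimed formula correspond precisely to swaps of dominance between the three covering contributions identified above.
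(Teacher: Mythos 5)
Your proposal is correct and follows essentially the same route as the paper's proof: both decompose the cylinders meeting $B$ by size, cover each large cylinder at cost $\lesssim (|F_i|/r)^{h+\eps}$ via Lemma~\ref{l:assouadgeo} and $\ubd F = h$, count the cylinders in each size class from the arithmetic of the fixed points $i^{-p}$ and sizes $\approx p i^{-t}$ (your ``short run of size $\lesssim R x^{-(p+1)/p}$'' is exactly the paper's Taylor-expansion count in Subcase~1.1), and identify the same three dominant quantities $r^{-h}R^{(ht-1)/p}$, $r^{-1/(p+1)}$ and $R/r$ whose exponents produce the three pieces of the formula. The only organizational differences are that the paper splits cases by $\theta$ relative to $p/t$ and $(p+1)/t$ and by $\eta$ with $\sup B = r^\eta$ rather than by the position of $x$, and dispatches all cylinders with $|F_i|\leq r$ in one line via $N_r(B\cap P)\lesssim (R/r)^s$ using $s>\asp P$ instead of re-deriving the fixed-point covering bounds as you do.
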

\begin{proof}
By increasing the implicit constant if required, we may assume that $r$ is small enough that if $B \cap S_i([0,1]) \neq \varnothing$ then $S_i([0,1]) = [i^{-p}, i^{-p} + p i^{-t}]$. 
Since $B \cap F \subseteq \bigcup_{i \in I : B \cap F_i \neq \varnothing} F_i$, it suffices to prove that $N_r(\bigcup_{i \in I : B \cap F_i \neq \varnothing} F_i) \lesssim r^{(\theta-1)s}$. 
First note that since $s > \asp P$, 
\[N_r \left(\bigcup_{i \in I : B \cap F_i \neq \varnothing, |F_i| \leq r} F_i \right) \lesssim N_r(B \cap P) \lesssim r^{(\theta - 1)s}.\]
Therefore it remains to prove that 
$N_r(\bigcup_{i \in I : B \cap F_i \neq \varnothing, r < |F_i| \leq cr^\theta} F_i ) \lesssim r^{(\theta - 1)s}$. To do so, we consider three cases depending on the value of $\theta$, corresponding to the location of the point $i^{-p}$ that satisfies $i^{-t} \approx r$. For such an $i$, in Case 1, $i^{-p} \lesssim r^\theta$, in Case 2, $r^\theta \lesssim i^{-p} \lesssim r^{\frac{p\theta}{p+1}}$, and in Case 3, $r^{\frac{p\theta}{p+1}} \lesssim i^{-p}$. The significance of $r^{\frac{p\theta}{p+1}}$ is that this is where gaps between consecutive elements of $P$ are $\approx r^\theta$. %

\textbf{Case 1:} Assume $0 < \theta \leq p/t$. %
Since $s > h = \dim_{\mathrm B} F$, we have 
\begin{align*} 
N_r\left( \bigcup_{i \in I : B \cap F_i \neq \varnothing, r < |F_i| \leq r^{t\theta/p} } F_i \right) &\lesssim \sum_{i = \lfloor r^{-\theta/p} \rfloor}^{\lfloor r^{-1/t} \rfloor} N_{r i^t}(F) \\
&\approx \int_{r^{-\theta /p}}^{r^{-1/t}} (r x^t)^{-h} dx  \\
&\lesssim r^{-h} (r^{1-th})^{-\theta/p} \\
&\leq r^{(\theta - 1)s}. 
\end{align*}
Let $\eta$ be such that $\sup B = r^{\eta}$. 
We may assume without loss of generality that $\eta \leq \theta$. %
We now have two subcases depending on the value of $\eta$. 

\begin{itemize}
\item Subcase 1.1: Assume $\eta > \frac{\theta p}{p+1}$. 
If $th \geq p+1$ then $\frac{\eta(p+1-th) + ph - p\theta}{p(1-\theta)} < h$, %
whereas if $th < p+1$ then $\frac{\eta (p+1-th) + ph - p\theta}{p(1-\theta)} \leq h + \frac{\theta}{p(1-\theta)}(1-h(t-p))$, so if we define 
\[ \epsilon \coloneqq \frac{1-p/t}{2}\left(s - \left( h + \frac{\theta}{p(1-\theta)}(1-h(t-p)) \right) \right) > 0 \]
 then $\frac{\eta(p+1-th) + ph - p\theta}{p(1-\theta)} + \frac{2}{1-p/t}\epsilon \leq s$. 
 Then 
\begin{align*} 
\# \{ \, i \in I : B \cap F_i \neq \varnothing, |F_i| > r  \, \} &\lesssim  r^{-\eta /p} - (r^{\eta} + r^\theta)^{-1/p} \\
&= r^{-\eta /p}(1 - (1+r^{\theta - \eta})^{-1/p}) \\
&\lesssim r^{-\eta /p + \theta - \eta - \epsilon}, 
\end{align*}
where the last bound is by Taylor's theorem. 
Furthermore, if $F_w \cap B \neq \varnothing$ and $|F_i| > r$ then $|F_i| \approx r^{\eta t/p}$, and $N_r(F_i) \approx N_{r^{1-\eta t/p}}(F) \lesssim r^{-(1-\eta t/p)(h+\epsilon)}$. 
 Putting this all together gives 
\begin{align*} 
N_r \left( \bigcup_{i \in I : B \cap F_i \neq \varnothing, |F_i| > r} F_i \right) &\lesssim r^{-\eta /p + \theta - \eta - \epsilon - (1-\eta t/p)(h+\epsilon)} \\
&= r^{(\theta - 1)( \frac{\eta (p+1-th) + ph - p\theta}{p(1-\theta)}  + \frac{2}{1-p/t}\epsilon)} \\
&\leq r^{(\theta - 1)s}. 
\end{align*}

\item Subcase 1.2: Assume $\eta \leq \frac{\theta p}{p+1}$. Then since $|B| = r^\theta$, we have $\# \{ \, i \in I : B \cap F_i \neq \varnothing\} \lesssim 1$. Therefore 
\[ N_r \left( \bigcup_{i \in I : B \cap F_i} F_i \right) \lesssim N_{r/c r^\theta}(F) \lesssim r^{(\theta - 1)s}. \]
\end{itemize}

\textbf{Case 2:} Assume $p/t < \theta < (p+1)/t$. Again let $\eta$ be such that $\sup B = r^{\eta}$. If $\eta > \frac{\theta p}{p+1}$ then by a similar argument to the proof of Subcase 1.1 we have 
\[ N_r \left( \bigcup_{i \in I : B \cap F_i \neq \varnothing, |F_i| > r} F_i \right) \lesssim r^{(\theta - 1)s}.\]
 If, on the other hand, $\eta \leq \frac{\theta p}{p+1}$, then as in Subcase 1.2 we have $\# \{ \, i \in I : B \cap F_i \neq \varnothing\} \lesssim 1$ so $N_r (\bigcup_{i \in I : B \cap F_i \neq \varnothing} F_i ) \lesssim r^{(\theta - 1)s}$. 

\textbf{Case 3:} Assume $(p+1)/t \leq \theta < 1$. Then if $r < |F_i| \leq c r^\theta$ and $B \cap F_i \neq \varnothing$ then $r^{\frac{\theta p}{p+1}} \leq r^{p/t} \lesssim \inf B$. %
Therefore
\[ \# \{ \, i \in I : B \cap F_i \neq \varnothing, r < |F_i| \leq c r^\theta \, \} \lesssim 1. \]
 Consequently, 
\[ N_r \left( \bigcup_{i \in I : B \cap F_i \neq \varnothing, r < |F_i| \leq c r^\theta } F_i \right) \lesssim r^{(\theta - 1)s}. \qedhere \]
\end{proof}

\begin{proof}[Proof of Theorem~\ref{t:sharp}]
We begin with the upper bounds. 
Case~\eqref{i:sharpupper} follows from Theorem~\ref{t:mainasp}. 
Assume $p + 1 < t < p + h^{-1}$ and let $s$ be larger than the claimed upper bound. 
The argument now uses a similar trick to the proof of Theorem~\ref{t:asd}. Let $0 < r < 1$ and let $B$ be a ball of radius $r^\theta$ intersecting $F$. Recall the constant $c$ from Lemma~\ref{l:sharplemma}. Let $w \in I^*$ be the unique word such that $F_w \cap B \neq \varnothing$ and $|F_w| \geq cr^\theta$ and such that if $v \in I^*$ satisfies $F_v \cap B \neq \varnothing$ and $|F_v| \geq cr^\theta$ then $v$ is a subword of $w$. Then $B \cap F = B \cap F_w$. If $c_w$ is the contraction ratio of $S_w$ then 
\[ N_r(B \cap F) = N_{r/c_w}(S_w^{-1}(B \cap F)) \leq N_{r/c_w}(S_w^{-1}(B) \cap F) \lesssim r^{(\theta - 1)s} \]
by Lemma~\ref{l:sharplemma}, since the claimed formula is increasing in $\theta$. The upper bound now follows. Case~\ref{i:sharplower} can be proved using a similar (but easier) argument. 

We now consider the lower bounds. Case~\eqref{i:sharplower} follows from Corollary~\ref{c:special}, so assume $t < p+h^{-1}$. 
The bound for $\theta \geq \frac{(h+hp-1)p}{(1+p)(ht-1)}$ is immediate from Theorem~\ref{t:mainasp}. Since $p/t \geq \frac{(h+hp-1)p}{(1+p)(ht-1)}$, it suffices to prove 
\[ \asp F \geq h + \frac{\theta}{p(1-\theta)}(1-h(t-p)) \qquad \mbox{ for all }\theta \in (0,p/t). \]%
Fix $\epsilon \in (0,(th-1)/t)$. %
In the $\lesssim$ notation below, the implicit constant can depend on $p,h,t,\epsilon$ only. 
Since $\dim_{\mathrm B} F = h$ we have $N_r(S_i(F)) \approx N_{r i^t} (F) \gtrsim (r i^t)^{-(h-\epsilon)}$ %
for sufficiently small $r>0$ and all $i \in \N$ such that $r \leq i^{-t}$, by Lemma~\ref{l:assouadgeo}. Therefore
 \begin{align*} 
N_r(F \cap [0,r^\theta]) &\gtrsim \sum_{i = \lfloor r^{-\theta / p} \rfloor}^{\lfloor r^{-1/t}\rfloor} (ri^t)^{-(h-\epsilon)} \\
&\approx r^{-(h-\epsilon)} \int_{ r^{-\theta / p} }^{r^{-1/t}} x^{-t(h-\epsilon)} dx \\
&\approx r^{-(h-\epsilon)} r^{-\frac{\theta}{p} (1-t(h-\epsilon))} \\
&= r^{(\theta - 1)\left(  h + \frac{\theta}{p(1-\theta)}(1-h(t-p)) - \frac{1 - \theta t / p}{1 - \theta } \epsilon  \right) }. 
 \end{align*} 
 Since $\epsilon$ was arbitrary, this completes the proof. 
\end{proof}

We note that the Assouad spectrum can be used to distinguish between different sets in this setting in cases where no other notion of dimension is able to. 
For any fixed $p \in (0,\infty)$, consider two sets from Theorem~\ref{t:sharp} that are chosen to have the same Hausdorff dimension $h$ but use different values of $t \in [p+1,p+h^{-1}]$, so have different Assouad spectra. 
Since the Assouad spectrum is stable under bi-Lipschitz maps, we deduce that the two sets cannot be bi-Lipschitz equivalent. 
In fact,~\cite[Proposition~4.7]{Fraser2018-2} can be used to give quantitative information about the possible exponents of bi-H\"older maps between two such sets. %
However, the Hausdorff, box and intermediate dimensions (studied in~\cite{Banaji2021}) of each of the sets will be $h$, and the Assouad and quasi-Assouad dimensions will be 1, independent of $t$, so none of these other dimensions provide information about Lipschitz equivalence or H\"older distortion.

The Assouad spectra of the sets in Theorem~\ref{t:sharp}~\eqref{i:sharpupper} satisfy the three-parameter formula and so have only one phase transition. The sets in~\eqref{i:sharpmiddle} and~\eqref{i:sharplower}, on the other hand, are sets which exhibit self-similarity and whose Assouad spectrum displays interesting behaviour in that it 
\begin{itemize}
\item does \emph{not} satisfy the three-parameter formula,
\item has two phase transitions.
\end{itemize}
These are the first examples of dynamically defined fractals whose Assouad spectrum has two phase transitions, though the elliptical polynomial spirals in~\cite{Burrell2020-1} can also have Assouad spectrum with two phase transitions. 
We will see in Section~\ref{s:ctdfracsect} that this behaviour can also be observed for classes of infinitely generated self-conformal sets defined using continued fraction expansions. 

If we do not insist that the set of fixed points forms a polynomial sequence, then we will see in Proposition~\ref{p:arbitrary} by making the contraction ratios extremely small that the Assouad spectrum of the limit set can be very wild, and indeed display behaviour that is as general as what is possible for arbitrary sets. 
To prove this we need Lemma~\ref{l:countable}, whose proof uses homogeneous Moran sets. 
These are topological Cantor sets formed by starting with a closed interval, removing an open interval from its centre, removing an open interval from the centre of each of the remaining intervals, and repeating this process indefinitely. 
The size of each interval removed is constant at each level but can vary between levels, resulting in a set which is very homogeneous at each given scale, but which can look much sparser at some scales than others. 
Homogeneous Moran sets have proved very useful in the theory of dimension interpolation and are described more formally in~\cite{Banaji2022moran,Rutar2022assouad}. 

\begin{lma}\label{l:countable}
If $G \subset \R$ is non-empty and bounded then there exists a countable set $P \subset \R$ which accumulates only at $0$ such that $\asp P = \asp G$ for all $\theta \in (0,1)$. 
\end{lma}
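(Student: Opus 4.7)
My plan is to construct $P$ as a countable union of translated ``witness'' subsets of $G$, arranged Moran-style so that each piece captures the Assouad spectrum of $G$ at some parameter, and the union accumulates only at $0$. Without loss of generality assume $G\subseteq[0,1]$. Fix a sequence $\{(\theta_k,\epsilon_k)\}_{k\in\N}$ in $(0,1)\times(0,1)$ with $\epsilon_k\to 0$, arranged so that for every $(\theta',\epsilon')$ in the domain there are infinitely many $k$ with $\theta_k$ approaching $\theta'$ from above and $\epsilon_k<\epsilon'$. For each $k$, by the definition of $\dim_{\mathrm A}^{\theta_k}G$, pick $x_k\in G$ and arbitrarily small $R_k>0$ such that, writing $r_k\coloneqq R_k^{1/\theta_k}$,
\[ N_{r_k}(B(x_k,R_k)\cap G)\ \geq\ R_k^{(\dim_{\mathrm A}^{\theta_k}G-\epsilon_k)(1-1/\theta_k)}, \]
and let $A_k$ be a maximal $r_k$-separated subset of $B(x_k,R_k)\cap G$ realising this count. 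In the spirit of the homogeneous Moran construction of~\cite{Banaji2022moran}, choose $t_k=2^{-2^k}$ and $R_k$ small enough that the translates $\tilde A_k\coloneqq A_k-x_k+t_k$ lie in pairwise disjoint intervals accumulating only at $0$, and set $P\coloneqq\{0\}\cup\bigcup_k\tilde A_k$, which is countable, bounded, and accumulates only at $0$.

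For the lower bound at a given $\theta\in(0,1)$, fix $\alpha<\asp G$ and extract a subsequence $k_j$ with $\theta_{k_j}\to\theta^+$ and $\epsilon_{k_j}\to 0$. By making $R_{k_j}$ small enough that $2R_{k_j}^{1/\theta}<r_{k_j}$, each ball of radius $R_{k_j}^{1/\theta}$ covers at most one point of $\tilde A_{k_j}$, so for any $y\in\tilde A_{k_j}$,
\[ N_{R_{k_j}^{1/\theta}}(B(y,R_{k_j})\cap P)\ \geq\ \#\tilde A_{k_j}\ \geq\ R_{k_j}^{(\dim_{\mathrm A}^{\theta_{k_j}}G-\epsilon_{k_j})(1-1/\theta_{k_j})}. \]
Dividing by $R_{k_j}^{\alpha(1-1/\theta)}$, the exponent difference tends to $(\asp G-\alpha)(1-1/\theta)<0$ as $j\to\infty$, so the ratio blows up and $\asp P\geq\alpha$; since $\alpha<\asp G$ was arbitrary, $\asp P\geq\asp G$.

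The upper bound $\asp P\leq\asp G$ is the main obstacle. For small $R$ and $x\in P$ one must bound $N_{R^{1/\theta}}(B(x,R)\cap P)$. If $B(x,R)$ meets only one piece $\tilde A_k$, the count reduces to a covering number of a ball in $G$ and the definition of $\asp G$ supplies the target bound directly. The delicate case is when $B(x,R)$ swallows a tail of pieces near $0$: here the doubly exponential decay of $t_k$ ensures that all but $O(1)$ of the pieces meeting $B(x,R)$ cluster inside a single ball of radius $R^{1/\theta}$ about $0$, contributing only $O(1)$ to the cover, while each of the finitely many remaining larger pieces is a translated subset of $G$ and can be handled individually via Lemma~\ref{l:uniformconst}. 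Combining the two regimes and letting the slack $\epsilon\to 0$ yields $\asp P\leq\asp G$, completing the proof.
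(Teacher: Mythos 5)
Your construction is genuinely different from the paper's. The paper does not build $P$ from $G$ directly: it first invokes the (nontrivial, external) result that there is a \emph{homogeneous} Moran set $M\subset[0,1]$ with $\dim_{\mathrm A}^\theta M=\dim_{\mathrm A}^\theta G$ for all $\theta$, then takes $P_n$ to be a maximal $n^{-n}$-separated subset of $M\cap[0,1/n]$ and sets $P=\bigcup_n P_n$. The upper bound is then trivial ($P\subset M$), and the lower bound follows because homogeneity of $M$ means the worst local covering behaviour is realised near $0$, where the nets $P_n$ resolve $M$ down to scales far below $n^{-1/\theta}$. Your approach buys self-containedness (no appeal to the realisation theorem for homogeneous Moran sets) at the cost of having to prove both inequalities by hand. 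Your lower bound is essentially sound, modulo routine care: you must arrange the construction so that, for each target $\theta$, you can find $k_j$ with $\theta_{k_j}\downarrow\theta$, $\epsilon_{k_j}\to0$ \emph{and} $R_{k_j}$ small enough that $2R_{k_j}^{1/\theta}<R_{k_j}^{1/\theta_{k_j}}$ (the admissible $R_{k_j}$ shrinks as $\theta_{k_j}\to\theta^+$, so you need infinitely many indices per value of $(\theta_k,\epsilon_k)$ with $R_k\to0$ along them), and note that $B(y,R_{k_j})$ with $y\in\tilde A_{k_j}$ need not contain all of $\tilde A_{k_j}$ (diameter up to $2R_{k_j}$), which costs only a harmless constant.

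The one step that would fail as written is in the upper bound: handling the $O_\theta(1)$ ``large'' pieces via Lemma~\ref{l:uniformconst}. That lemma controls covers in terms of the \emph{upper} Assouad spectrum, and applying it to the ball $B(x_k,R_k)$ at scale $R^{1/\theta}$ forces you to evaluate $\uasp[\phantom{\theta}]$ at the effective parameter $\theta''=\theta\log R_k/\log R\geq\theta$; since $G$ is an arbitrary bounded set, $\overline{\dim}_{\mathrm A}^{\theta''}G$ and $\uasp G$ may strictly exceed $\asp G$, so this route only yields $\asp P\leq\uasp G$, which is weaker than the statement. The fix is simple and worth making explicit: for each large piece, enlarge the ambient ball from $B(x_k,R_k)$ to $B(x_k,R)$ (legitimate since $R_k\leq R$ and $x_k\in G$), so that the two scales are exactly $R$ and $R^{1/\theta}$ and the plain definition of $\asp G$ at the parameter $\theta$ applies, giving $N_{R^{1/\theta}}(\tilde A_k)\leq N_{R^{1/\theta}}(B(x_k,R)\cap G)\leq C_\epsilon(R/R^{1/\theta})^{\asp G+\epsilon}$. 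With that replacement, and the (correct) observation that the double-exponential spacing of the $t_k$ leaves at most $\log_2(\theta^{-1})+O(1)$ pieces between scales $R^{1/\theta}$ and $R$, your upper bound goes through and the proof is complete.
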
 
\begin{proof}
It is known that there exists a homogeneous Moran set $M$ (whose initial interval is $[0,1]$, say) with $\asp M = \asp G$ for all $\theta \in (0,1)$. Indeed, this was first proved in~\cite{Rutar2022assouad} using techniques from~\cite{Banaji2022moran}, and a more explicit proof was subsequently described in \cite[Remark~1.4]{BanajiPreprintphiassouad}. 
For each $n \geq 2$, let $P_n$ be a maximal $n^{-n}$-separated subset of $M \cap [0,1/n]$, and let $P \coloneqq \cup_{n = 1}^{\infty} P_n$, so $P$ is countable and accumulates only at $0$. 
Since $P \subset M$, $\asp P \leq \asp M = \asp G$ for all $\theta$. 

For the reverse inequality, we note that for all $\theta \in (0,1)$, for all $n$ sufficiently large (depending on $\theta$) and $r \in [n^{-1/\theta},n^{-1}]$, $N_r(P \cap [0,1/n]) \approx N_r(M \cap [0,1/n])$. 
But since $M$ is a \emph{homogeneous} Moran set, $N_r(M \cap [0,1/n]) \gtrsim N_r(M \cap [x,x+1/n])$ for all $x \in \R$. Therefore $\asp P \geq \asp M = \asp G$, completing the proof. 
\end{proof} 

\begin{prop}\label{p:arbitrary}
Let $G \subset \R$ be \emph{any} non-empty bounded set with $\qad G > 0$. Then there exists a CIFS of similarity maps on $[0,1]$ whose limit set $F$ satisfies $\asp F = \uasp G$ for all $\theta \in (0,1)$. 
\end{prop}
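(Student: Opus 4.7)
The plan is as follows. By Lemma~\ref{l:countable} applied to $G$, there is a countable set $P = \{p_i\}_{i \in \N} \subset \R$, accumulating only at $0$, with $\asp P = \asp G$ for every $\theta \in (0,1)$; hence also $\uasp P = \uasp G$ via the sup characterisation of $\uasp$. After rescaling we may assume $P \subset [0, 1/4]$ and $p_i \to 0$. Set $\Delta_i \coloneqq \min_{j \neq i} |p_j - p_i| > 0$, choose positive ratios decaying extremely fast
\[
 c_i \leq \min(\Delta_i/3,\, i^{-i}),
\]
and define $S_i(x) \coloneqq c_i x + p_i$. This is a CIFS of similarities on $X = [0,1]$: the OSC follows from $c_i \leq \Delta_i/3$, while uniform contractivity ($c_i \leq 1/2$), conformality, BDP, and the cone condition are automatic for affine maps of an interval. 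Because $\sum_i c_i^\eta < \infty$ for every $\eta>0$, the pressure formula \eqref{e:hausdorffpressure} gives $h = \dim_{\mathrm H} F = 0$, so $\ubd F = \max\{h, \ubd P\} = \ubd P$ by \cite[Theorem~3.5]{Banaji2021}.

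The lower bound $\asp F \geq \uasp G$ is immediate: $P = \{S_i(0)\}$ is a valid one-point-per-cylinder choice, so Theorem~\ref{t:mainasp} gives $\uasp F \geq \uasp P = \uasp G$, and Lemma~\ref{l:monotoniclemma} gives $\asp F = \uasp F$.

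For the matching upper bound, the bound in Theorem~\ref{t:mainasp} is too crude (its right-hand side can reach $\qad G$), so I would estimate $N_r(F \cap B(x,R))$ directly for $r \leq R^{1/\theta}$, partitioning the cylinders meeting $B(x,R)$ into \emph{small} ($c_i \leq r$), \emph{medium} ($r < c_i < R$), and \emph{big} ($c_i \geq R$). The small cylinders reduce to covering $P \cap B(x, 2R)$, which by Lemma~\ref{l:uniformconst} costs $\lesssim (R/r)^{\uasp P + \epsilon}$. The medium cylinders have cardinality $\lesssim \log(1/r)$ thanks to the fast decay $c_i \leq i^{-i}$, and each is covered by $\lesssim (c_i/r)^{\ubd F + \epsilon} \leq (R/r)^{\ubd P + \epsilon}$ balls via Lemma~\ref{l:assouadgeo}; since $\ubd P \leq \uasp P$ and $R/r \geq R^{-(1-\theta)/\theta}$ grows polynomially as $R \to 0$, the logarithmic overhead is absorbed into an extra $(R/r)^{\epsilon}$. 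The at most $M$ big cylinders are handled by an inductive argument modelled on the proof of Theorem~\ref{t:mainasp}: Lemma~\ref{l:assouadgeo} pulls $F_i \cap B(x,R)$ back to $F \cap B(x', R/c_i)$ at the strictly smaller scale $r/c_i$ (which still satisfies $r/c_i \leq (R/c_i)^{1/\theta}$ because $c_i < 1$), and the exponential gap $r \leq R^{1/\theta}$ absorbs the accumulated multiplicative constant $C^n$ exactly as in the concluding step \eqref{e:exponentialgap}.

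The main obstacle is this upper bound: Theorem~\ref{t:mainasp} is not sharp enough, so a direct covering argument tailored to the superfast contraction ratios is essential. The delicate point is verifying that the logarithmic overhead from the medium cylinders and the geometric constant accumulated during the induction over the big cylinders are both absorbed by the exponential scale gap, uniformly in $\theta \in (0,1)$. This is precisely what the choice $c_i \leq i^{-i}$ and the exponential gap $r \leq R^{1/\theta}$ in the definition of the Assouad spectrum conspire to make possible, yielding $\asp F \leq \uasp P + O(\epsilon)$ and hence, on letting $\epsilon \to 0$, the required equality $\asp F = \uasp G$.
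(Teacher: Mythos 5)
Your overall route is the same as the paper's: take the countable set $P$ from Lemma~\ref{l:countable}, build a CIFS of similarities with fixed points $P$ and superexponentially decaying ratios, get the lower bound from Theorem~\ref{t:mainasp}, and rerun the machinery of Claim~\ref{claim:asp} observing that the fast decay collapses the medium-scale cylinder count (the paper imposes $c_{n+1}\leq c_n^n$ to force $\#(\mathit{MED}\cup\bigcup_k I_k)\in\{0,1\}$; your $c_i\leq i^{-i}$ gives a count of order $\log(1/r)$, which you correctly absorb into $(R/r)^{\epsilon}$ — both work). However, there is one genuine error: your claim that $\sum_i c_i^{\eta}<\infty$ for every $\eta>0$ forces $h=\dim_{\mathrm H}F=0$. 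That confuses the finiteness parameter $\Theta$ of \eqref{e:finiteness} with $h$; the paper only says $\Theta$ is a \emph{lower} bound for $h$. For similarities the pressure is $\overline{P}(t)=\log\sum_i c_i^t$, and for \emph{any} infinite system $\sum_i c_i^t\to\infty$ as $t\to 0^+$ (infinitely many terms each tending to $1$), so $\overline{P}(t)>0$ for small $t$ and hence $h>0$ no matter how fast the $c_i$ decay. Thus $h=0$ is impossible, and your deduction $\ubd F=\ubd P$ (and, more importantly, $h\leq\uasp P$ for all $\theta$, which you need for the final answer to be $\uasp P$ rather than $\max\{h,\uasp P\}$) is unjustified as written.

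This is exactly where the hypothesis $\qad G>0$ — which your argument never invokes — is needed. It implies $\ubd G>0$ (via the standard bound relating the Assouad spectrum to the box dimension, cited in the paper as \cite{Fraser2019-3,Fraser2018-2,Garcia2021qa}), and then one shrinks the $c_i$ so that $h$, while strictly positive, satisfies $h\leq\ubd G=\ubd P\leq\uasp P$. With that replacement your argument goes through and yields $\asp F=\max\{h,\uasp P\}=\uasp P=\uasp G$, matching the paper. Note that the hypothesis is not cosmetic: if $\qad G=0$ then $\uasp G\equiv 0$ while any infinite CIFS has $h>0$, so the conclusion would be false.
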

\begin{proof}
Let $P$ be the set from Lemma~\ref{l:countable} associated to $G$, and write $P = \{p_1,p_2,\dotsc \}$ where $p_1 > p_2 > \dotsb$. 
Choose positive numbers $\{c_1,c_2,\dotsc \}$ small enough that $\{ \, S_i(x) = c_i x + p_i : i \in \N \, \}_{i \in I}$ forms a CIFS on $[0,1]$. 
Since $\qad G > 0$, it is known from work in \cite{Fraser2019-3,Fraser2018-2,Garcia2021qa} that $\ubd G > 0$. 
By decreasing the $c_i$ if necessary, we may assume that the Hausdorff dimension $h$ of the limit set $F$ of our CIFS satisfies $h \leq \ubd G$. 
Decreasing the $c_i$ even further, we may assume that $c_{n+1} \leq c_n^n$ for all $n \in \N$. Then following the proof of Theorem~\ref{t:mainasp}, we see that for all $\theta \in (0,1)$ and $n$ sufficiently large (depending on $\theta$), the sets defined in Claim~\ref{claim:asp} satisfy 
\[ \# \left( \mathit{MED} \cup \bigcup_{k=0}^{k_r} I_k \right) \in \{0,1\}, \] 
so in this case the same proof strategy gives $\asp F = \max\{h,\uasp P\}$. Therefore by Lemma~\ref{l:countable} and \cite{Fraser2019-3}, $\asp F = \uasp P = \uasp G$, as required. 
\end{proof}

In particular, different possible forms for the Assouad spectrum of the limit set include being concave on open intervals, being a non-trivial differentiable function on $(0,1)$, or being non-differentiable on every open interval, see~\cite{Rutar2022assouad}. 
However, these exotic behaviours are coming from the set of fixed points rather than from the dynamical structure of the limit set. 
After submitting this paper, the authors in collaboration with Kolossv\'ary and Rutar~\cite{BanajiPreprintgl} have shown that for a class of finitely generated self-affine sets called Gatzouras--Lalley carpets, the Assouad spectrum can also have interesting behaviour, such as being concave on open intervals, or being a non-trivial differentiable function on $(0,1)$, or having many phase transitions. 

\section{Applications: continued fractions and parabolic IFSs}\label{s:ctdfracsect}
In this section we apply our methods to calculate the Assouad spectra of several interesting families of fractal sets. Sets of real or complex numbers which have continued fraction expansions with restricted entries are especially well-studied in the dimension theory of dynamical systems, see for instance~\cite{Banaji2021,Chousionis2020,Mauldin1999} and~\cite[Section~9.2]{Fraser2020book}.  
For a non-empty, proper subset $I \subset \N$, define 
\[ F_I \coloneqq \left\{ \, z \in (0,1) \setminus \mathbb{Q} : z = \frac{1}{b_1 + \frac{1}{b_2 + \frac{1}{\ddots}}}, b_n \in I \mbox{ for all } n \in \N \, \right\}. \]
Then $F_I$ is the limit set of the CIFS given by the inverse branches of the Gauss map $\mathcal{G} \colon [0,1] \to [0,1]$ (given by $\mathcal{G}(x) = \{ 1/x \}$ where $\{y\}$ denotes the fractional part of $y \geq 0$) corresponding to the elements of $I$. 
\begin{lma}\label{l:ctdfraccifs}
Working in $\mathbb{R}$, letting $X \coloneqq [0,1]$ and $V \coloneqq (-1/8,9/8)$, 
\begin{itemize}
\item\label{i:ctdfracnot1} If $1 \notin I$ then $\{ \, S_b(x) \coloneqq 1/(b+x) : b \in I \, \}$ is a CIFS with limit set $F_I$. 
\item\label{i:ctdfrac1} If $1 \in I$ then $\{ \, S_b(x) \coloneqq 1/(b+x) : b \in I, b \neq 1 \, \} \cup \left\{ \, S_{1b}(x) \coloneqq \frac{1}{b+\frac{1}{1+x}} : b \in I \, \right\}$ is a CIFS with limit set $F_I$. 
\end{itemize}
\end{lma}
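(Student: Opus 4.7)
The plan is to verify the four defining properties of a CIFS from Definition~\ref{d:cifs} for each system, and then identify the coded limit set with $F_I$. Each map under consideration is a Möbius transformation extending holomorphically to the neighbourhood $V = (-1/8, 9/8)$ of $X = [0,1]$, so conformality in the one-dimensional sense (a $C^{1+\epsilon}$ diffeomorphism with non-vanishing, Hölder-continuous derivative) is automatic, and the Cone Condition is trivial for a closed interval. The substantive content is therefore uniform contractivity $\|\overline{S_i}'\|_V < \xi < 1$ combined with invariance $\overline{S_i}(V) \subset V$, the Open Set Condition, the Bounded Distortion Property, and the identification of the limit set with $F_I$.

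For case (i), where $1 \notin I$, every $b \in I$ satisfies $b \geq 2$, so $|S_b'(x)| = 1/(b+x)^2 \leq 1/(15/8)^2 = 64/225$ for all $x \in V$, giving uniform contractivity. Since $S_b$ is decreasing, $S_b(V) = (8/(8b+9), 8/(8b-1)) \subset (0, 8/15) \subset V$. For the OSC, the images $S_b((0,1)) = (1/(b+1), 1/b)$ are pairwise disjoint open subintervals of $(0,1)$. Each $z \in F_I$ has a unique continued fraction expansion $[0; b_1, b_2, \ldots]$ with every $b_n \in I$, and $\{z\} = \bigcap_n S_{b_1}\circ\cdots\circ S_{b_n}([0,1])$, so $F_I = \pi(I^{\mathbb{N}})$ is exactly the coded limit set.

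For case (ii), where $1 \in I$, the map $S_1$ itself must be excluded because $|S_1'(0)| = 1$ (failing uniform contraction) and $S_1(-1/8) = 8/7 > 9/8$ (failing invariance of $V$); these are precisely the defects that the grouping $S_{1b} = S_1 \circ S_b$ repairs. One computes $S_{1b}(x) = (b+x)/(b+x+1)$ with $|S_{1b}'(x)| = 1/(b+x+1)^2 \leq 1/(15/8)^2 = 64/225$ on $V$ for every $b \in I$, and $S_{1b}(V) \subset (0,1) \subset V$; the maps $S_b$ with $b \in I \setminus \{1\}$ are handled exactly as in case (i). For the OSC, the images $S_b((0,1))$ with $b \geq 2$ lie in $(0, 1/2)$ while $S_{1b}((0,1)) = (b/(b+1), (b+1)/(b+2))$ lie in $(1/2, 1)$, and within each family the intervals are pairwise disjoint. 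To identify the limit set, scan any $b_1 b_2 \cdots \in I^{\mathbb{N}}$ left to right, pairing each occurrence of $1$ with the symbol immediately following it: this defines a bijection onto infinite words in the new alphabet $\{b \in I : b \neq 1\} \cup \{1b : b \in I\}$ that preserves composition of maps and therefore the coded point, so both limit sets coincide with $F_I$.

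The main obstacle is the Bounded Distortion Property (condition~\eqref{i:bdp}), since for countably infinite conformal systems it does not follow automatically from the other conditions. For these Gauss-map branches, however, the BDP is classical and can either be cited from~\cite{Mauldin1996,Mauldin1999} or derived in a few lines via the chain-rule identity for $\log(S_w'(y)/S_w'(x))$ together with Hölder continuity of $\log|S_b'|$ on $V$ uniformly in $b$ and a geometric-series bound coming from uniform contractivity.
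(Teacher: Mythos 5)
Your proof is correct, but it is worth saying up front that the paper does not actually prove this lemma: its entire ``proof'' is the sentence ``This is well known (see \cite[page~4997]{Mauldin1999})''. So your full verification of conditions \eqref{i:osc}--\eqref{i:bdp} of Definition~\ref{d:cifs} is doing genuinely more work than the source, and all of your numerical estimates check out: $|S_b'(x)|=1/(b+x)^2\le 64/225$ on $V$ for $b\ge 2$, the invariance $S_b(V)\subset(0,8/15)\subset V$, the failure of both contraction and invariance for $S_1$ alone, and the disjointness of the images of $U=(0,1)$. Deferring the BDP to the standard chain-rule/H\"older argument or to \cite{Mauldin1996,Mauldin1999} is entirely in the spirit of what the paper itself does.

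One point deserves explicit comment. Your formula $S_{1b}(x)=(b+x)/(b+x+1)=\frac{1}{1+\frac{1}{b+x}}$ is $S_1\circ S_b$, which agrees with the paper's general convention $S_w=S_{w_1}\circ\cdots\circ S_{w_n}$ and with the Mauldin--Urba\'nski construction; but the lemma as printed defines $S_{1b}(x)=\frac{1}{b+\frac{1}{1+x}}$, which is $S_b\circ S_1$ and corresponds to the digit block ``$b$ then $1$''. You have silently corrected what appears to be a transposition typo in the statement, and your version is the one that makes the rest of your argument work: with blocks of the form ``$b1$'' the left-to-right pairing fails (the word $1,2,3,\dots$ admits no decomposition, so the recoded limit set would omit $[0;1,2,3,\dots]$ and be a proper subset of $F_I$), and your OSC computation $S_{1b}((0,1))=(b/(b+1),(b+1)/(b+2))$ likewise only holds for $S_1\circ S_b$. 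It would strengthen the write-up to state this correction explicitly rather than leave the reader to reconcile your formula with the one displayed in the lemma.
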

\begin{proof}
This is well known (see~\cite[page~4997]{Mauldin1999}, for example). 
\end{proof}
Lemma~\ref{l:ctdfraccifs} shows why our general results can be applied in this setting, and is one of the reasons why we proved the general bounds above for conformal contractions rather than just for similarities. Recall that by~\cite[Theorem~3.15]{Mauldin1996}, the Hausdorff dimension $h$ can be determined by the topological pressure function.

We first show that subsets of $\N$ which satisfy an asymptotic condition give rise to continued fraction sets whose Assouad spectrum satisfies the three-parameter form and attains the upper bounds of Theorem~\ref{t:mainasp} and Corollary~\ref{c:special}. 

\begin{prop}\label{t:ctdfracdense}
If $I \subseteq \N$ satisfies 
\begin{equation}\label{e:fulldensity}
\limsup_{N \to \infty} \frac{\log \# (I \cap [1,N])}{\log N} = 1
\end{equation}
then for all $\theta \in (0,1)$ we have 
\[ \asp F_I = \left\{\begin{array}{lr}
        h + \frac{\theta}{(1-\theta)}(1-h), & \text{for } 0\leq \theta < 1/2 \\
        1, & \text{for } 1/2 \leq \theta \leq 1
        \end{array}\right. .
\]
\end{prop}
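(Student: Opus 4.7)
I apply Theorem~\ref{t:mainasp} with $P = \{1/b : b \in I\}$ (permitted by Lemmas~\ref{l:ctdfraccifs} and~\ref{l:samewithinlevelasd}). The density hypothesis first forces the finiteness parameter $\Theta = 1/2$: for $t < 1/2$ it gives $\sum_{b \in I, b \leq N} b^{-2t} \geq N^{1 - \epsilon - 2t}$ unbounded along a subsequence, while for $t > 1/2$, $\sum_{b \in I} b^{-2t} \leq \zeta(2t) < \infty$; hence $h \geq 1/2$. I then argue $\uasp P = \min\{1/(2(1-\theta)), 1\}$: the upper bound combines $P \subseteq F_1$ with~\eqref{e:fpspectrum}, while the matching lower bound uses the density hypothesis to find arbitrarily large $N$ with $\#(I \cap [1, N^{1/(2\theta)}]) \geq N^{(1-\epsilon)/(2\theta)}$, producing at least this many $N^{-1/\theta}$-separated points $\{1/b\}$ inside $B(0, N^{-1})$ (and extending to $\theta \geq 1/2$ by monotonicity of $\uasp$). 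Since $\ubd P = 1/2 \leq h$, \cite[Theorem~3.5]{Banaji2021} gives $\ubd F_I = h$. A brief calculation of $\partial f/\partial \phi$ (using $1/2 \leq h < 1$; $h = 1$ is trivial) shows $\phi \mapsto f(\theta, \phi)$ is non-decreasing on $[\theta, 1/2]$ and strictly decreasing on $[1/2, 1]$, so the maximum over $\phi \in [\theta, 1]$ is $f(\theta, 1/2) = h + \theta(1-h)/(1-\theta)$ when $\theta \leq 1/2$ and $f(\theta, \theta) = 1$ when $\theta \geq 1/2$, matching the claim.

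\textbf{Lower bound.} For $\theta \geq 1/2$, the bound $\asp F_I \geq \uasp P = 1$ from Theorem~\ref{t:mainasp} suffices, and for $h = 1/2$ the formula reduces to $\uasp P$. The nontrivial case is $\theta < 1/2$ with $h > 1/2$, which I handle by adapting the counting argument from the proof of Theorem~\ref{t:sharp}\eqref{i:sharpmiddle} via a pigeonhole that produces dense dyadic scales. Choose $\epsilon_m \downarrow 0$ and apply the density hypothesis to obtain $N_m \to \infty$ with $\#(I \cap [1, N_m]) \geq N_m^{1-\epsilon_m}$, then pigeonhole on the dyadic decomposition of $[1, N_m]$ to find $A_m = 2^{k_m}$ with $\#(I \cap [A_m, 2A_m]) \geq N_m^{1-\epsilon_m}/(2 \log_2 N_m) =: A_m^{1-\eta_m}$. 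The trivial bound $\# \leq A_m$ forces $\log A_m \geq (1-\epsilon_m)\log N_m - O(\log\log N_m)$, so $\eta_m \leq \epsilon_m + O(\log\log N_m/\log N_m) \to 0$. Set $R_m = 1/A_m$ and $r_m = R_m^{1/\theta}$. For $\theta < 1/2$, consecutive cylinders $S_b([0,1]) \subset [1/(b+1), 1/b]$ with $b \asymp A_m$ are mutually separated by distances $\gtrsim A_m^{-2} \gg r_m$, so the disjoint sets $\{S_b(F_I) : b \in I \cap [A_m, 2A_m]\} \subset B(0, R_m) \cap F_I$ contribute essentially additively (up to a bounded boundary-overlap factor). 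Combining Lemma~\ref{l:assouadgeo} with $\lbd F_I = h$ (which holds since $\dim_{\mathrm H} F_I \leq \lbd F_I \leq \ubd F_I = h$) gives
\[
N_{r_m}(F_I \cap B(0, R_m)) \gtrsim \sum_{b \in I \cap [A_m, 2A_m]} (r_m b^2)^{-(h-\epsilon)} \gtrsim A_m^{1 - \eta_m + (1/\theta - 2)(h-\epsilon)} = (R_m/r_m)^{s - o(1)},
\]
with $s = h + \theta(1-h)/(1-\theta)$ and $o(1)$ taken as $m \to \infty$ and $\epsilon \to 0$. Letting $m \to \infty$ and $\epsilon \to 0$ yields $\asp F_I \geq s$.

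\textbf{Main obstacle.} The delicate step is the last lower bound: the density hypothesis provides only limsup-type information, and the pigeonhole produces $A_m$ at an a priori unpredictable scale. The essential observation is that coupling the pigeonhole count with the trivial cardinality bound $\#(I \cap [A_m, 2A_m]) \leq A_m$ pins $\log A_m$ down within a window of width $O(\log\log N_m)$, which suffices to force $\eta_m \to 0$ uniformly in the location of $A_m$.
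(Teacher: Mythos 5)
Your proof is correct and follows essentially the same strategy as the paper's: compute $\asp P$ for the fixed-point set, deduce $h\geq 1/2$ from the finiteness parameter, read off the upper bound from the maximum of $f(\theta,\phi)$ in Theorem~\ref{t:mainasp}, and match it from below by locating a narrow band of indices near the top of $[1,N]$ where $I$ has nearly full density and summing the covering numbers of the corresponding cylinders via Lemma~\ref{l:assouadgeo} and $\dim_{\mathrm B}F_I=h$. The only difference is cosmetic: you extract the dense band by a dyadic pigeonhole pinned down by the trivial cardinality bound, whereas the paper uses intervals $[M_n,M_n^{k/(k-2)}]$ and lets $k\to\infty$; both yield the same exponent.
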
%

\begin{proof}
If $I \subseteq \N$ satisfies the condition~\eqref{e:fulldensity} then by the proof of~\cite[Lemma~14.1.4]{Fraser2020book}, 
\[ \asp \{ \, 1/b : b \in I \, \} = \min\left\{\frac{1}{2(1-\theta)},1\right\} \]
for all $\theta \in [0,1]$. This will equal the Assouad spectrum of the set of fixed points of the contractions comprising the CIFS from Lemma~\ref{l:ctdfraccifs} even if $1 \in I$, because the Assouad spectrum is finitely stable and unchanged under bi-Lipschitz transformations. 
Fix an integer $k \geq 10$. By~\eqref{e:fulldensity} there exists a sequence of integers $M_1 < M_2 < \cdots$ such that $M_{n+1} > M_n^{k/(k-2)}$ and $\# (I \cap [1,M_n^{k/(k-2)}]) \geq (M_n^{k/(k-2)})^{1-1/k}$ for all $n \in \N$. %
Then $\# (I \cap [M_n,M_n^{k/(k-2)}]) \gtrsim M_n^{\frac{k-1}{k-2}}$, with the implicit constant independent of $n$. 
Now 
\[ \sum_{b \in I} (b^{-2})^{\frac{k-1}{2k}} \geq \sum_{n=1}^{\infty} \sum_{\substack{b \in I \\ M_n \leq b \leq M_n^{k/(k-2)}}} (b^{-2})^{\frac{k-1}{2k}} \gtrsim \sum_{n=1}^{\infty} M_n^{\frac{k-1}{k-2}} ((M_n^{k/(k-2)})^{-2})^{\frac{k-1}{2k}} = \infty . \]
The finiteness parameter (recall~\eqref{e:finiteness}) of the system therefore satisfies $\Theta \geq \frac{k-1}{2k}$. Letting $k \to \infty$ shows that $\Theta \geq 1/2$, so $\dim_{\mathrm B} F = h \geq 1/2$. %
Fix $s<h$ and $\theta \in (0,1/2)$. By Lemma~\ref{l:assouadgeo}, 
\begin{align*}
 N_{M_n^{-1/\theta}} (F \cap [0,M_n^{-1}]) &\geq N_{M_n^{-1/\theta}} (F \cap [M_n^{-k/(k-2)},M_n^{-1}]) \\
 &\gtrsim M_n^{\frac{k-1}{k-2}} \left( \frac{M_n^{-2k/(k-2)}}{M_n^{1/\theta}} \right)^s \\
&= M_n^{-(1-1/\theta)\left( \frac{s(1-2k\theta/(k-2)) + \theta(k-1)/(k-2)}{1-\theta} \right)}. 
\end{align*}
The result follows upon letting $s \to h^-$ and $k \to \infty$. 
\end{proof}

In Propositions~\ref{p:ctdspaced} and~\ref{p:ctdclustered} we consider the Assouad spectra of continued fraction sets for particular families of infinite subsets of $\N$. In Proposition~\ref{p:ctdspaced}, the numbers that we allow to be included in the continued fraction expansions are eventually spaced apart like $\{ \, \lfloor n^p \rfloor : n \in \N \, \}$ (similar to~\cite[Example~4.4]{Banaji2021} for the intermediate dimensions). However, we need to allow an arbitrary choice for the first finitely many digits of $I$ to ensure that the Hausdorff dimension can be in the range that yields non-trivial behaviour for the Assouad spectrum (such as two phase transitions).

\begin{prop}\label{p:ctdspaced}
Fix $p \in (1,\infty)$. Assume that $I \subseteq \N$ is such that the symmetric difference of $I$ and $\{ \, \lfloor n^p \rfloor : n \in \N \, \}$ is finite and that $h \in (1/(p+1),1/p)$. Then 
\[ \asp F_I = 
\left\{\begin{array}{lr}
        h + \frac{\theta}{p(1-\theta)}(1-ph), & \text{for } 0\leq \theta \leq \frac{(h+hp-1)p}{(1+p)(2ph-1)}\\
        \frac{1}{(1+p)(1-\theta)}, & \text{for } \frac{(h+hp-1)p}{(1+p)(2ph-1)} < \theta < \frac{p}{1+p}\\
        1, & \text{for } \frac{p}{1+p}\leq \theta \leq 1
        \end{array}\right. .\]

In particular, $\asp F$ has two phase transitions and lies strictly between the bounds in Theorem~\ref{t:mainasp} for all $\theta \in (0,\frac{(h+hp-1)p}{(1+p)(2ph-1)})$. 
\end{prop}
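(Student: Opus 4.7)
The plan is to reduce the computation directly to Theorem~\ref{t:sharp}~\eqref{i:sharpmiddle} with parameter $t = 2p$. By Lemma~\ref{l:ctdfraccifs}, $F_I$ is the limit set of a CIFS whose generating contractions are either $S_b(x) = 1/(b+x)$ for $b \in I$ (if $1 \notin I$), or a hybrid collection including compositions $S_{1b}$ (if $1 \in I$); either way a short computation shows $\|S_b'\| \asymp b^{-2}$ uniformly and that the cylinder $S_b([0,1])$ is an interval of length $\asymp b^{-2}$ with one endpoint at $1/b$. Because the symmetric difference of $I$ and $\{\lfloor n^p\rfloor : n \in \N\}$ is finite, the fixed-point-like points $\{1/b : b \in I\}$ accumulate at $0$ like $n^{-p}$ and the contraction ratios behave like $n^{-2p}$. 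In the language of Lemma~\ref{l:existcifstechnical} this corresponds to the parameters $p$ (same) and $t = 2p$. The hypotheses $p > 1$ and $h < 1/p$ translate to $p+1 < t < p + h^{-1}$, placing us in case~\eqref{i:sharpmiddle} of Theorem~\ref{t:sharp}, and substituting $t = 2p$ into the three-piece formula there yields exactly the stated expression, including the phase transition at $\frac{(h+hp-1)p}{(1+p)(2hp-1)}$.

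For the upper bound, the strategy is to adapt Lemma~\ref{l:sharplemma} to this (non-similarity) setting. The key geometric inputs used in that lemma are that for $b \approx n^p$: (i) $|S_b([0,1])| \asymp n^{-2p}$; (ii) the gaps between consecutive cylinders are $\asymp n^{-(p+1)}$; and (iii) the box dimension of $F$ equals $h$ with uniform covering estimates on images $S_w(F)$. Item (i) is the $b^{-2}$ estimate above, item (ii) comes from $1/b - 1/(b+1) \asymp b^{-2} \asymp n^{-2p}$ for cylinders indexed by the polynomial sequence, and item (iii) follows from Lemma~\ref{l:assouadgeo} together with the BDP of Definition~\ref{d:cifs}~\eqref{i:bdp}. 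With these in hand, the three-case scale analysis (Cases~1--3) in the proof of Lemma~\ref{l:sharplemma} transfers verbatim up to multiplicative constants, and the Theorem~\ref{t:sharp} reduction using a maximal enveloping cylinder $S_w(B)$ then delivers the upper bound. For the complication that $I$ may differ from $\{\lfloor n^p\rfloor\}$ on a finite set or contain $1$, one uses Lemma~\ref{l:samewithinlevelasd}, the finite stability of $\asp$, and passes to a cofinite subsystem whose Assouad spectrum is unchanged.

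For the lower bound, for $\theta$ in the first interval one reproduces the integration argument at the end of the proof of Theorem~\ref{t:sharp}: cylinders $S_b(F)$ with $b \in [r^{-\theta/p}, r^{-1/(2p)}]$ all lie in $[0, cr^\theta]$ for some constant $c$, and Lemma~\ref{l:assouadgeo} gives $N_r(S_b(F)) \gtrsim (rb^{2p})^{-(h-\varepsilon)}$; summing over $b$ and letting $\varepsilon \to 0^+$ produces the claimed expression $h + \frac{\theta}{p(1-\theta)}(1-ph)$. For $\theta$ in the middle interval, the bound $\asp F_I \geq \uasp P_I$ from Theorem~\ref{t:mainasp}, combined with $\uasp P_I = \asp F_p = \frac{1}{(1+p)(1-\theta)}$ via~\eqref{e:fpspectrum} (and Lemma~\ref{l:samewithinlevelasd} to disregard finite discrepancies), gives the second piece, while the trivial bound $\asp F_I \leq 1$ together with $\asp F_I \geq \qad F_I = 1$ handles $\theta \geq p/(1+p)$ (via Corollary~\ref{c:qa} applied to $\qad P_I = 1$). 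The final assertion about two phase transitions and strict separation from the bounds of Theorem~\ref{t:mainasp} is then just an inspection of the explicit formula, mirroring the corresponding remark after Theorem~\ref{t:sharp}.

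The main obstacle is confirming rigorously that the scale analysis of Lemma~\ref{l:sharplemma}, which was written for affine similarities, is insensitive to replacing contractions by Möbius maps obeying the BDP. Concretely, one must check that the cylinder endpoints, lengths, and mutual separations used implicitly throughout that proof hold up to multiplicative constants, and that the enveloping-cylinder trick from Theorem~\ref{t:sharp} respects the conformal geometry. All of this is routine given the general framework of Section~\ref{s:setting}, but it is the only place where the proof is not a direct citation.
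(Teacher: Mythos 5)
Your overall strategy --- reducing to the $t=2p$ case of Theorem~\ref{t:sharp}~\eqref{i:sharpmiddle} via the correspondence $\|S_b'\| \approx b^{-2} \approx n^{-2p}$ and $S_b(0)=1/b \approx n^{-p}$, with Lemma~\ref{l:assouadgeo} and the BDP absorbing the passage from similarities to conformal maps, and the integration argument plus Theorem~\ref{t:mainasp} for the lower bounds --- is exactly the route the paper takes. However, you miss the one technical point the paper explicitly singles out. The enveloping-cylinder step (the constant $c$ in Lemma~\ref{l:sharplemma}, and the reduction to a unique maximal word $w$ in the proof of Theorem~\ref{t:sharp}) relies on comparably sized cylinders being separated by gaps comparable to their diameters. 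For continued fractions the level-one cylinders are $S_b([0,1])=[1/(b+1),1/b]$, so the cylinders for consecutive digits $b,b+1\in I$ \emph{touch}; since $\{\lfloor n^p\rfloor\}$ itself contains consecutive integers for $p$ near $1$, and the finite symmetric difference allows further consecutive digits, a small ball centred near a shared endpoint meets two comparably large cylinders neither of which is a prefix of the other, and the uniqueness of the enveloping cylinder fails. The paper's fix is not to separate the original system but to borrow the iteration trick from the proof of Theorem~\ref{t:mainasp}: refine to a system $\{T_j\}$ with all $\|T_j'\|$ small and run the induction on scales, so that the boundedly many $\mathit{BIG}$ cylinders (at most $M$ of them, with no uniqueness required) are handled by the inductive hypothesis rather than by pulling back under a single $S_w$.

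Your proposed workaround --- ``passes to a cofinite subsystem whose Assouad spectrum is unchanged'' --- does not do this job: deleting finitely many maps from the CIFS replaces $F_I$ by a proper sub-attractor and strictly decreases the Hausdorff dimension $h$ appearing in the formula, so it is legitimate only for computing $\asp$ of the fixed-point set $P$ (via finite stability for \emph{sets}), not for the limit set itself. A further minor slip: your justification of the spacing estimate computes the cylinder \emph{length} $1/b-1/(b+1)\approx b^{-2}\approx n^{-2p}$, whereas the gap between the cylinders indexed by $\lfloor n^p\rfloor$ and $\lfloor (n+1)^p\rfloor$ is $\approx n^{-(p+1)}$; the asymptotic you assert is correct but is not what your displayed computation gives.
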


\begin{proof}
We start with the appropriate CIFS from Lemma~\ref{l:ctdfraccifs} (depending on whether or not $1 \in I$). Although the OSC always holds, the separation condition in Theorem~\ref{t:asd} does not hold if $I$ contains consecutive digits. To deal with this, we use the same trick as in the proof of Theorem~\ref{t:mainasp}, namely to iterate finitely many times to form a new CIFS where each conformal copy of $F$ is very small and use an induction argument. The proof is now very similar to the case $t=2p$ of the proof of Theorem~\ref{t:sharp}~\eqref{i:sharpmiddle}, but with the additional technicality that the bound from Lemma~\ref{l:assouadgeo} needs to be used since we are in the self-conformal (rather than strictly self-similar) setting. The details are left to the reader. 
\end{proof}

In Proposition~\ref{p:ctdspaced}, if $h \in (1/(2p),1/(p+1)]$ then the bounds coincide at the Assouad spectrum of the set of fixed points (which equals $\asp F_p = \min\left\{ \frac{1}{(1+p)(1-\theta)},1\right\}$) by Corollary~\ref{c:special}. If, on the other hand, $h \in [1/p,1)$, %
 then the lower bounds are attained. 
The Assouad spectrum for the above sets for a certain choice of parameters is shown in green in Figure~\ref{f:sharp}.

In Proposition~\ref{p:ctdclustered}, the elements of $I$ are very clustered together (in contrast to Proposition~\ref{p:ctdspaced}), resulting in the upper bound from Theorem~\ref{t:mainasp} and the three-parameter form being satisfied.

\begin{prop}\label{p:ctdclustered}
Fix $\alpha \in (0,1)$ and let $I \subset \N$ be such that the symmetric difference of $I$ and $\N \cap \bigcup_{k=1}^\infty [2^k,2^k + 2^{k \alpha}]$ is finite. Then the phase transition is $\rho = 1-\alpha/2$, and 
\[ \asp F_I  = \left\{\begin{array}{lr}
        h + \frac{\alpha \theta}{(1-\theta)(2-\alpha)}(1-h), & \text{for } 0\leq \theta < \rho \\
        1, & \text{for } \rho \leq \theta < 1
        \end{array}\right. . \] 
\end{prop}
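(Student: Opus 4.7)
The plan is to use Lemma~\ref{l:ctdfraccifs} to realise $F_I$ as the limit set of a CIFS and, via Lemma~\ref{l:samewithinlevelasd}, to take the fixed-point set to be $P = \{\,1/b : b \in I\,\}$ up to the finite symmetric difference. Writing $P = \bigsqcup_{k \geq 1} P_k$ where $P_k \coloneqq \{\,1/b : b \in I \cap [2^k, 2^k + 2^{k\alpha}]\,\}$ is a cluster of $\approx 2^{k\alpha}$ points concentrated in a window of length $\approx 2^{-k(2-\alpha)}$ near $2^{-k}$ and with interpoint spacing $\approx 2^{-2k}$, I would combine the upper bound of Theorem~\ref{t:mainasp} with a hand-constructed lower bound.

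First I would pin down the dimension invariants of $P$. A direct box-counting argument, split over the natural scales $2^{-k(2-\alpha)}$ (cluster diameter) and $2^{-2k}$ (interpoint spacing), gives $\ubd P = \alpha/2$. A more delicate case analysis of the possible configurations of $B(x,R) \cap P$---depending on whether $x$ is near $0$ or inside a particular cluster $P_k$, and on how $R$ compares to the cluster diameter and interpoint spacing---shows that $P$ satisfies the three-parameter form of Definition~\ref{d:threeparam} with $\ubd P = \alpha/2$, $\qad P = 1$ and $\rho_P = 1 - \alpha/2$; explicitly, $\uasp P = \min\{\alpha/(2(1-\theta)), 1\}$ for $\theta \in (0,1)$. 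The lower bound here is achieved by centring $B(x,R)$ at $0$ for $\theta \leq 1/2$ (capturing the clusters $P_k$ for $k \geq \log_2(1/R)$ and letting the separated-points regime at $k \approx \log_2(1/r)/2$ dominate) and by centring $B$ at a single cluster $P_{k_1}$ with $R \approx 2^{-2k_1\theta}$ for $\theta \in (1/2, 1-\alpha/2)$ (so that $B$ contains the whole cluster and its $2^{k_1\alpha}$ points are just separated at $r = R^{1/\theta} \approx 2^{-2k_1}$). \emph{The matching upper bound for $\uasp P$, which requires a careful enumeration of the configurations of $B$, is the main technical obstacle.} Since the topological pressure of the CIFS diverges at $t = \alpha/2$ we have $h > \alpha/2$, whence $\ubd F_I = \max\{h,\alpha/2\} = h$ by~\cite[Theorem~3.5]{Banaji2021}, $\qad F_I = 1$ by Corollary~\ref{c:qa}, and $\rho_{F_I} = 1-\alpha/2$ by Corollary~\ref{c:phase}. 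Substituting $\overline{\dim}_\mathrm{A}^\phi P = \min\{\alpha/(2(1-\phi)),1\}$ into the function $f(\theta,\phi)$ of~\eqref{e:fdefn}, an elementary calculation using $h > \alpha/2$ shows that $\max_{\phi \in [\theta,1]} f(\theta,\phi)$ is attained at $\phi = 1 - \alpha/2$, yielding the upper bound $\asp F_I \leq \min\{h + \frac{\alpha\theta(1-h)}{(1-\theta)(2-\alpha)}, 1\}$ claimed in the proposition; this is exactly the three-parameter form delivered by Corollary~\ref{c:special}.

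For the matching lower bound when $\theta \in (0, 1-\alpha/2)$, I would centre $B(x,R)$ near $x \approx 2^{-k_1}$ with $R \approx 2^{-k_1(2-\alpha)}$, so that $B$ contains all $\approx 2^{k_1\alpha}$ cylinders $F_b = S_b(F_I)$ for $b \in I \cap [2^{k_1}, 2^{k_1}+2^{k_1\alpha}]$. Each such cylinder is a conformal image of $F_I$ with contraction $||S_b'|| \approx 2^{-2k_1}$, and since consecutive $X_b, X_{b+1}$ are separated by gaps of order $b^{-2} \approx 2^{-2k_1}$ and $F_I$ lies strictly inside $[0,1]$, the bodies $F_b$ are pairwise separated by a fixed positive fraction of $2^{-2k_1}$. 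The condition $\theta < 1-\alpha/2$ forces $r = R^{1/\theta} \ll 2^{-2k_1}$, so distinct $F_b$ cannot share covering balls. Combining Lemma~\ref{l:assouadgeo} with $\lbd F_I = \ubd F_I = h$ gives $N_r(F_b) \gtrsim (r\,2^{2k_1})^{-(h-\epsilon)}$; summing over the cylinders and simplifying yields $\log N_r(B \cap F_I)/\log(R/r) \to h + \frac{\alpha\theta(1-h)}{(1-\theta)(2-\alpha)}$ as $k_1 \to \infty$. Finally, the case $\theta \in [1-\alpha/2, 1)$ follows from $\qad F_I = 1$ and the monotonicity of $\asp F_I$ (Lemma~\ref{l:monotoniclemma}), together with the fact that the formula already reaches $1$ at $\theta = 1-\alpha/2$.
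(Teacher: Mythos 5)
Your proposal is correct and follows essentially the same route as the paper: the upper bound comes from Theorem~\ref{t:mainasp} via Corollary~\ref{c:special} (Case 4, with $\ubd P=\alpha/2<h<\qad P=1$), and the lower bound comes from exactly the paper's construction, namely a ball of radius $R\approx 2^{-k(2-\alpha)}$ around the $k$-th cluster containing $\approx 2^{k\alpha}$ cylinders of size $\approx 2^{-2k}$, each costing $\gtrsim(2^{-2k}/r)^{h-\epsilon}$ at scale $r=R^{1/\theta}$. The one step you flag as ``the main technical obstacle'' --- the upper bound $\uasp P\leq\min\{\alpha/(2(1-\theta)),1\}$ --- is in fact no obstacle at all: it is an instance of the general inequality $\asp E\leq\ubd E/(1-\theta)$ valid for every bounded set, which is precisely how the paper dispatches it (via \cite[Lemma~3.4.4]{Fraser2020book}), so no enumeration of configurations of $B$ is needed. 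One small imprecision: $F_I$ does not lie strictly inside $[0,1]$ (it accumulates at $0$ since $I$ is infinite); the separation of the bodies $F_b$ by gaps $\approx b^{-2}$ comes from $\sup F_I\leq 1/2<1$, i.e.\ being bounded away from the endpoint $1$ only, which is all that is required.
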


\begin{proof}
A direct calculation shows that $\dim_{\mathrm A}^{1-\alpha /2} P = 1$, and combining this with~\cite[Lemma~3.4.4]{Fraser2020book}) shows that $\asp P = \min\left\{\frac{\alpha}{2(1-\theta)},1\right\}$ for all $\theta \in (0,1)$. Moreover, $\dim_{\mathrm B} F = h \geq \alpha/2$, since the finiteness parameter is easily calculated to be $\Theta = \alpha /2$ in this case. Moreover, for all $\epsilon > 0$ and $k \in \N$ we have $2^{-k} - (2^{-k} + 2^{k\alpha})^{-1} \approx 2^{-k(2-\alpha)}$ with the implicit constant independent of $k$. Therefore by Lemma~\ref{l:assouadgeo}, 
\begin{align*}
N_{2^{-k(2-\alpha )/\theta}}(F_i \cap [(2^k - 2^{-k(2-\alpha)},2^{-k}]) &\gtrsim 2^{k\alpha} \left(\frac{2^{-2k}}{2^{-k(2-\alpha )/\theta}}\right)^{h-\epsilon} \\
&= 2^{k[-(2-\alpha)(1-\theta^{-1})(h + \frac{\alpha \theta}{(1-\theta)(2-\alpha)}(1-h)) - (\frac{2-\alpha }{\theta} -2)\epsilon ]}. 
\end{align*} 
Since $\epsilon$ was arbitrary, this completes the proof. 
\end{proof}

We now calculate the Assouad spectrum of a special set related to complex continued fraction expansions. Namely, %
define 
\[ F_{\N + \Z i} \coloneqq \left\{ \, z \in \mathbb{C} : z = \frac{1}{b_1 + \frac{1}{b_2 + \frac{1}{\ddots}}}, b_n \in \N + \Z i \mbox{ for all } n \in \N \, \right\}. \]
It is clear from~\cite[Section~6]{Mauldin1996} that 
\begin{equation}\label{e:complexcifs}
 \{ \, S_b(z) \coloneqq 1/(b+z) : b \in (\N + \Z i) \setminus \{1\} \, \} \cup \left\{ \, S_{1b}(z) \coloneqq \frac{1}{b+\frac{1}{1+z}} : b \in \N + \Z i \, \right\} 
 \end{equation}
is a CIFS with limit set $F_{\N + \Z i}$. %
Estimates for the Hausdorff dimension $h = \dim_{\mathrm H} F_{\N + \Z i}$ are given in~\cite[Section~6]{Mauldin1996} and~\cite{Gardner1983,Priyadarshi2016}. To our knowledge, the tightest bounds to date are $1.85574 \leq h \leq 1.85589$ from~\cite{Falk2018}. 
The following result states that the Assouad spectrum satisfies a formula in terms of the Hausdorff dimension, given by the upper bound from Theorem~\ref{t:mainasp}. 
\begin{prop}
We have  
\[ \asp F_{\N + \Z i} = \left\{\begin{array}{lr}
        h + \frac{\theta}{1-\theta}(2-h), & \text{for } 0 < \theta < 1/2 \\
        2, & \text{for } 1/2 \leq \theta < 1
        \end{array}\right. . \]
\end{prop}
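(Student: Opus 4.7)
The plan is to apply Theorem~\ref{t:mainasp} to the CIFS from~\eqref{e:complexcifs}, which requires computing $\uasp P$ and $\ubd F$, and then to establish a matching lower bound by direct construction in the regime where $\max\{h,\uasp P\}$ alone is insufficient. By Lemma~\ref{l:samewithinlevelasd} I may pick $P$ conveniently; I will take $P$ to be (a finite modification of) $\{1/b : b \in \N + \Z i\}$, since the additional fixed points arising from the composite $S_{1b}$ maps contribute only a bi-Lipschitz copy of a similar lattice reciprocal and therefore affect no Assouad-type dimension.

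The first input is $\uasp P = \min\{1/(1-\theta),2\}$, which in particular gives $\ubd P = 1$, so that $\ubd F = \max\{h,\ubd P\} = h$ by the estimate $h > 1.85$ from~\cite{Falk2018}. This is a two-dimensional version of the standard spectrum for $\{1/n : n \in \N\}$: centring on the unique accumulation point $0$ of $P$ and decomposing dyadically into annuli $|b| \in [2^k/R, 2^{k+1}/R]$, one finds $\approx (2^k/R)^2$ Gaussian integers per dyadic scale whose reciprocals fill an annulus of area $\approx (R/2^k)^2$ with pairwise separation $\approx (R/2^k)^2$. Summing the contributions at scale $r$ yields $N_r(P \cap B(0,R)) \approx \min\{1/r,(R/r)^2\}$, with the transition at $r \approx R^2$, from which the claimed spectrum follows.

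With these inputs, the upper bound $\max_{\phi \in [\theta,1]} f(\theta,\phi)$ from Theorem~\ref{t:mainasp} is evaluated by monotonicity analysis: using $1 < h < 2$, the map $\phi \mapsto f(\theta,\phi)$ is increasing on $[\theta,1/2]$ (where $\uasp P = 1/(1-\phi)$) and decreasing on $[1/2,1]$ (where $\uasp P = 2$), so the maximum occurs at $\phi = \max\{\theta,1/2\}$. For $\theta \leq 1/2$, direct simplification of $f(\theta,1/2) = (2+(\theta^{-1}-2)h)/(\theta^{-1}-1)$ yields $h + \theta(2-h)/(1-\theta)$, matching the claim. For $\theta \geq 1/2$, the maximum is $f(\theta,\theta) = \uasp P = 2$, and in this range the trivial lower bound $\uasp P \leq \asp F$ from Theorem~\ref{t:mainasp} immediately matches the upper bound, so $\asp F = 2$ follows without further work.

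The main obstacle is the lower bound $\asp F \geq h + \theta(2-h)/(1-\theta)$ for $\theta \in (0,1/2)$, where the lower bound of Theorem~\ref{t:mainasp} is strictly weaker. Following the strategy of Proposition~\ref{t:ctdfracdense}, I take $R = 1/n$ and $r = R^{1/\theta}$ and estimate $N_r(F \cap B(0,2R))$ from below by summing contributions of cylinders $S_b(F)$ with $|b|$ in a dyadic range $[m,2m]$ for $n \leq m \leq r^{-1/2}$: Lemma~\ref{l:assouadgeo} together with $\ubd F = h$ gives $N_r(S_b(F)) \gtrsim (rm^2)^{-(h-\epsilon)}$, and summing over the $\approx m^2$ choices of $b$ at level $m$ and then over dyadic $m$ (whose sum is dominated by $m = n$ since $2-2h < 0$) produces $N_r(F \cap B(0,2R)) \gtrsim n^{2-2h+h/\theta - O(\epsilon)}$. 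Rewriting this using $R/r = n^{(1-\theta)/\theta}$ gives $(R/r)^s$ with $s = h + \theta(2-h)/(1-\theta) - O(\epsilon)$, completing the lower bound as $\epsilon \to 0$. The one technical subtlety is that cylinders for adjacent Gaussian integers may share a boundary, but thinning the summation to a well-separated sub-collection preserves the dyadic counts up to an absolute constant.
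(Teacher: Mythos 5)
Your proposal is correct and follows essentially the same route as the paper: one computes $\asp P = \min\{(1-\theta)^{-1},2\}$ for the fixed-point set, evaluates the upper bound of Theorem~\ref{t:mainasp} at $\phi=1/2$ (equivalently, invokes Case~4 of Corollary~\ref{c:special}), and matches it for $\theta<1/2$ by summing over the $\approx R^{-2}$ cylinders $F_b$ with $|b|\approx 1/R$, each of diameter $\approx R^{2}$, using $\underline{\dim}_{\mathrm{B}}F\geq h$ together with Lemma~\ref{l:assouadgeo}. The only cosmetic difference is that the paper works with the single annulus $R^{1+\epsilon}<|z|<R$ in image space while you decompose dyadically in $|b|$, but since your dyadic sum is dominated by the block $m\approx 1/R$ the two lower-bound arguments coincide.
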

\begin{proof}
Let $P$ be the set of fixed points of the CIFS in~\eqref{e:complexcifs}. A similar proof to the proof of the $p=1$ case of~\eqref{e:fpspectrum} in~\cite[Corollary~6.4]{Fraser2018-2} shows that $\asp P = \min\{(1-\theta)^{-1},2\}$. Fix $0 < \theta < 1/2$ and very small $\epsilon \in (0,1)$, and suppose $0 < R < 1$. Let $I_{R,\epsilon} \coloneqq \{ \, i \in \N + \Z i : R^{1+\epsilon} < z < R \mbox{ for all } z \in F_i \, \}$. Then $\# I_{R,\epsilon} \approx R^{-2}$, with the implicit constants depending on $\epsilon$ but not on $R$. Using the Koebe distortion theorem we have $|F_i| \gtrsim R^{2(1+\epsilon)}$ for all $i \in I_{R,\epsilon}$. %
Therefore by Lemma~\ref{l:assouadgeo}, %
\[ N_{R^{1/\theta}} (B(0,R) \cap F_{\N + \Z i}) \gtrsim \sum_{i \in I_{R,\epsilon}} N_{R^{1/\theta}} (F_i) \gtrsim R^{-2} \left( \frac{R^{2(1+\epsilon)}}{R^{1/\theta}}\right)^{h-\epsilon} = R^{(1-\theta^{-1})\left( f(\theta,1/2) - \frac{1-2\theta \epsilon}{1-\theta} \epsilon \right)}.\] 
Letting $\epsilon \to 0^+$ gives $\asp F_{\N + \Z i} \geq f(\theta,1/2)$, from which the result follows. 
\end{proof}
It would be possible to study the Assouad spectra of the limit sets of appropriately chosen subsystems of the CIFS in~\eqref{e:complexcifs}, but we will not pursue this.

Our results also have applications to the dimension theory of parabolic iterated function systems. In such a system, each map $S \colon X \to X$ still satisfies $||S(x) - S(y)|| < ||x-y||$ for all $x,y \in X$, but may contain a \emph{parabolic fixed point} $p \in X$, meaning that $S(p) = p$ but the derivative of $S$ (or an extension of $S$) at $p$ has norm 1. 
The theory of parabolic IFSs has been developed by Mauldin and Urbański in~\cite{Mauldin2000parabolic}, and they have also been studied in~\cite{Urbanski1996paraboliccantor,Mauldin2002parabolic},~\cite[Section~9.2]{Fraser2020book}, and many other works. 

Given a (possibly infinite) parabolic IFS as defined in~\cite[Section~2]{Mauldin2000parabolic}, one can associate an `induced' uniformly contracting infinite CIFS (see~\cite[Theorem~5.2]{Mauldin2000parabolic}). It is clear that if $F$ is the limit set of the parabolic IFS and $F^*$ is the limit set of the induced CIFS then $F^* \subseteq F$ with $F \setminus F^*$ countable, and $F$ and $F^*$ have the same closure. Therefore if $\dim$ is any of the notions of dimension mentioned in this paper then $\dim F = \dim F^*$. In particular,~\cite[Theorem~3.5]{Banaji2021} for the intermediate dimensions and Theorem~\ref{t:mainasp} for the Assouad spectrum can be applied directly to the induced system to give information about the corresponding dimension of $F$. %

As an example, we consider a finite parabolic IFS on the line with a single parabolic fixed point of Manneville--Pomeau type at 0. In this setting, the Hausdorff dimension (denoted $h$) equals the box dimension~\cite{Urbanski1996paraboliccantor} and the Assouad dimension is 1~\cite[Theorem~9.2.1]{Fraser2020book}. In Proposition~\ref{p:parabolic} we show that the Assouad spectrum attains the upper bound from Theorem~\ref{t:mainasp}. 
\begin{prop}\label{p:parabolic}
Fix $N \in \N$ and $\epsilon, q > 0$. Suppose $S_1, \ldots, S_N \colon [0,1] \to [0,1]$ satisfy $|S_i(x) - S_i(y)| < |x-y|$ for all $x,y \in [0,1]$ and extend to $C^{1+\epsilon}$ maps on $(-\epsilon, 1+\epsilon)$. Assume $S_2,\ldots,S_N$ are $\xi$-Lipschitz for some $\xi < 1$ and that $S_i((0,1)) \cap S_j((0,1)) = \varnothing$ whenever $i \neq j$. Finally, assume $S_1(0) = 0$ and 
\[ \frac{x - S_1(x)}{x^{1+q}} \to 1 \quad \mbox{as} \quad x \to 0^+.\]
 Let $F$ denote the limit set of this parabolic IFS. Then for $\theta \in (0,1)$, 
 \begin{equation}\label{e:parabolic}
 \asp F = \left\{\begin{array}{lr}
        h + \frac{q \theta}{1-\theta}(1-h), & \text{for } 0\leq \theta < \frac{1}{1+q} \\
        1, & \text{for } \frac{1}{1+q}\leq \theta < 1
        \end{array}\right. . 
        \end{equation}
\end{prop}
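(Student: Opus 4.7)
The plan is to reduce to the uniformly contracting setting by passing to the induced CIFS. As described immediately before the statement, the induced system has maps $T_{n,j} \coloneqq S_1^n \circ S_j$ for $n \geq 0$ and $j \in \{2,\ldots,N\}$. I will verify that these are $\xi$-Lipschitz contractions on $[0,1]$ extending to $C^{1+\epsilon}$ conformal maps on $(-\epsilon, 1+\epsilon)$ and that the bounded distortion property holds (standard parabolic theory, see~\cite{Mauldin2000parabolic}). Writing $F^*$ for the limit set of the induced CIFS, $F \setminus F^*$ is countable so $\dim F = \dim F^*$ for every notion of dimension considered here, and Theorem~\ref{t:mainasp} and its consequences apply to $F^*$.

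Next I will extract the geometry of the induced maps from the asymptotic $x - S_1(x) \sim x^{1+q}$. Comparing with the differential equation $\dot x = -x^{1+q}$ gives $S_1^n(x) \sim (qn)^{-1/q}$ as $n \to \infty$ uniformly over $x$ in compact subsets of $(0,1]$, and $(S_1^n)'(x) \approx n^{-(q+1)/q}$ on the same range. Writing $p \coloneqq 1/q$, the points $T_{n,j}(0)$ therefore lie at distance $\approx n^{-p}$ from the origin and $\|T_{n,j}'\| \approx n^{-(p+1)}$, so the set $P$ of such points is a finite bi-Lipschitz union of copies of $\{n^{-p}: n \in \mathbb{N}\}$. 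By~\cite[Corollary~6.4]{Fraser2018-2},
\[
\asp P = \min\left\{\tfrac{1}{(p+1)(1-\theta)},\,1\right\} = \min\left\{\tfrac{q}{(q+1)(1-\theta)},\,1\right\},
\]
which is the three-parameter form with $\ubd P = q/(q+1)$, $\qad P = 1$, and $\rho_P = 1/(q+1)$.

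For the upper bound in~\eqref{e:parabolic} I invoke Corollary~\ref{c:special}. By~\cite{Urbanski1996paraboliccantor}, $\ubd F = h$, and the finiteness-parameter bound applied to the induced CIFS gives $h \geq \Theta = 1/(p+1) = \ubd P$. If $h = 1$ the bound is immediate; otherwise $\ubd P \leq h < \qad P = 1$, so Case 3 or Case 4 of Corollary~\ref{c:special} yields $\max_{\phi \in [\theta,1]} f(\theta, \phi) = f(\theta, \rho_P) = h + \tfrac{q\theta}{1-\theta}(1-h)$ for $\theta < 1/(q+1)$ (a short computation shows both cases deliver the same expression), while for $\theta \geq 1/(q+1)$ we have $\asp F \leq \qad F = 1$ by Corollary~\ref{c:qa}.

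For the matching lower bound on $(0, 1/(q+1))$ I will adapt the lower-bound argument from Theorem~\ref{t:sharp}~\eqref{i:sharpupper} to the conformal setting via Lemma~\ref{l:assouadgeo}. Fix such a $\theta$, small $\epsilon > 0$, and small $r > 0$. For $n$ in the range $[r^{-\theta/p}, r^{-1/(p+1)}]$ (non-empty precisely because $\theta < p/(p+1) = 1/(q+1)$), each $T_{n,j}(F)$ lies in a ball $B(0, Cr^\theta)$ and has diameter $\gtrsim r$; Lemma~\ref{l:assouadgeo} combined with $N_{r'}(F) \gtrsim (r')^{-(h-\epsilon)}$ then gives $N_r(T_{n,j}(F)) \gtrsim (r n^{p+1})^{-(h-\epsilon)}$. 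Summing over such $n$ (the images being disjoint by the OSC) and estimating the resulting sum by the integral at its lower endpoint yields
\[
N_r(F \cap B(0, Cr^\theta)) \gtrsim r^{(\theta - 1)\left(h + \frac{q\theta}{1-\theta}(1-h)\right) + O(\epsilon)},
\]
and letting $\epsilon \to 0$ gives the desired lower bound. For $\theta \geq 1/(q+1)$, if $h < 1$ then Corollary~\ref{c:phase} forces $\rho_F = \rho_P = 1/(q+1)$, hence $\asp F = \qad F = 1$ throughout this range (and the case $h = 1$ is trivial). The main technical obstacle I anticipate is establishing the uniform asymptotics for $S_1^n$ and $(S_1^n)'$ with enough precision to pin down the induced geometry; these are standard under the Manneville--Pomeau hypothesis but require careful book-keeping.
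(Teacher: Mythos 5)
Your proposal is correct and follows essentially the same route as the paper: pass to the induced CIFS, use the Manneville--Pomeau asymptotics (the estimates from Mauldin--Urbański) to identify the fixed-point set with a bi-Lipschitz copy of $\{n^{-1/q}\}$, get the upper bound from Theorem~\ref{t:mainasp}/Corollary~\ref{c:special}, and get the matching lower bound by the same sum-to-integral covering count used in Theorem~\ref{t:sharp}~\eqref{i:sharpupper}. Your write-up is in fact more detailed than the paper's sketch, and the details (including the coincidence of the Case 3 and Case 4 expressions when $h=\ubd P$) check out.
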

\begin{proof}
Let $P$ be the set of fixed points of the induced CIFS 
\[ \{ \, S_1^n \circ S_i : n \in \N, 2 \leq i \leq N \, \} \cup \{ S_2, \ldots, S_N\}.\] 
Estimates similar to~\cite[(4.2)--(4.4)]{Mauldin2002parabolic} show that $\asp P = \asp (\{ \, i^{-1/q} : i \in \N \, \})$. The result now follows by a similar argument to the proof of Theorem~\ref{t:sharp}~\eqref{i:sharpupper}, using estimates similar to~\cite[(4.1)]{Mauldin2002parabolic}. 
\end{proof}
More generally, one could consider a finite parabolic iterated function system that generates a `parabolic Cantor set' in the sense of~\cite{Urbanski1996paraboliccantor}. Then several of the maps $S_i$ can have a parabolic fixed point $p_i$ with local behaviour $S_i(x) = p_i + a_i (x-p_i)^{1+q_i} + o((x-p_i)^{1+q_i} )$, and the Assouad spectrum will take the form of~\eqref{e:parabolic} with $q = \max_i q_i$. 

Finally, we apply Proposition~\ref{p:parabolic} to calculate the Assouad spectrum of sets generated by backwards continued fraction expansions. For $I \subseteq \{2,3,4,\ldots,\}$ define 
\[ \mathcal{B}_I \coloneqq \left\{ \, z \in (0,1) \setminus \mathbb{Q} : z = 1 - \frac{1}{b_1 - \frac{1}{b_2 - \frac{1}{\ddots}}}, b_n \in I \mbox{ for all } n \in \N \, \right\}. \]
Recall that the R\'enyi map $\mathcal{R} \colon [0,1] \to [0,1]$ is defined by $\mathcal{R}(x) \coloneqq \left\{ \frac{1}{1-x}\right\}$. 
Then $\mathcal{B}_I$ is the limit set of the (possibly parabolic, possibly infinite) IFS consisting of the inverse branches of the R\'enyi map corresponding to the elements of $I$. 
\begin{cor}\label{c:backward}
If $I \subseteq \{2,3,4,\ldots,\}$ is finite with $2 \in I$ then 
\[ \asp \mathcal{B}_I = \left\{\begin{array}{lr}
        h + \frac{\theta}{1-\theta}(1-h), & \text{for } 0\leq \theta < 1/2 \\
        1, & \text{for } 1/2 \leq \theta < 1
        \end{array}\right. . \]
\end{cor}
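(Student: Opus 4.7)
The plan is to realise $\mathcal{B}_I$ as the limit set of a finite parabolic IFS and then invoke Proposition~\ref{p:parabolic} directly, with parameter $q=1$. Inverting the Rényi map $\mathcal{R}$ on the interval $(1 - 1/(b-1),\, 1 - 1/b]$ for each $b \geq 2$ yields the branches
\[ T_b \colon [0,1] \to [0,1], \qquad T_b(y) = 1 - \frac{1}{b-1+y}. \]
A short induction shows that $T_{b_1}\circ T_{b_2} \circ \cdots \circ T_{b_n}(y) = 1 - 1/(b_1 - 1/(b_2 - \cdots - 1/(b_n - 1 + y)))$, so passing to the limit identifies $\mathcal{B}_I$ with the limit set of the IFS $\{T_b : b \in I\}$. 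Since $I$ is finite and $2 \in I$, this is a finite IFS with a distinguished map $T_2$.

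Next I verify the hypotheses of Proposition~\ref{p:parabolic}, taking $S_1 \coloneqq T_2$ and letting $S_2,\ldots,S_N$ enumerate $\{T_b : b \in I \setminus \{2\}\}$. For $b \geq 3$ one has $|T_b'(y)| = (b-1+y)^{-2} \leq 1/4$ on $[0,1]$, so the non-parabolic maps are uniform contractions. Each $T_b$ extends smoothly (in fact, real-analytically) to a neighbourhood of $[0,1]$ avoiding $y = -(b-1)$, so the required $C^{1+\epsilon}$ extension on $(-\epsilon,1+\epsilon)$ exists for small $\epsilon$. A direct computation gives $T_b(x) - T_b(y) = (x-y)/((b-1+x)(b-1+y))$, which for $b=2$ is a strict contraction on $[0,1]$ (since $(1+x)(1+y) > 1$ whenever $x \ne y$ and $\{x,y\} \ne \{0\}$). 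The images $T_b([0,1]) = [1 - 1/(b-1),\, 1 - 1/b]$ are pairwise disjoint in their interiors, giving the required separation. Finally, $T_2(0) = 0$ and $T_2'(0) = 1$, so $0$ is a parabolic fixed point, and
\[ \frac{x - T_2(x)}{x^{1+q}} = \frac{x - x/(1+x)}{x^{1+q}} = \frac{x^{1-q}}{1+x} \xrightarrow[]{x \to 0^+} 1 \quad \text{iff} \quad q = 1. \]

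With all hypotheses of Proposition~\ref{p:parabolic} verified with $q = 1$, the conclusion~\eqref{e:parabolic} specialises to the formula in Corollary~\ref{c:backward}, the phase transition $1/(1+q) = 1/2$ agreeing with the stated location. There is no real obstacle here beyond correctly identifying the inverse branches of $\mathcal{R}$ and observing that the branch at $b=2$ has the Manneville–Pomeau exponent $q=1$; every other ingredient is a direct verification.
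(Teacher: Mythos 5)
Your proposal is correct and follows essentially the same route as the paper: identify the inverse branches of the R\'enyi map, observe that the branch at $b=2$ is $x \mapsto x/(1+x)$ with a parabolic fixed point of Manneville--Pomeau exponent $q=1$ at the origin while the remaining branches are uniform contractions, and apply Proposition~\ref{p:parabolic}. The only difference is cosmetic: you verify the condition $(x-T_2(x))/x^{2} \to 1$ by exact computation rather than the paper's appeal to Taylor's theorem, and you spell out the hypothesis checks that the paper leaves implicit.
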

\begin{proof}
The inverse branch of the R\'enyi map corresponding to $2 \in I$ is $x \mapsto x/(1+x)$, which has a parabolic fixed point at 0, and all other branches are uniformly contracting. Therefore using Taylor's theorem, Corollary~\ref{c:backward} follows from the $q=1$ case of Proposition~\ref{p:parabolic}. %
\end{proof}

\section{Further work}\label{s:further}

We conclude the paper with some potential directions for future research. 
While Theorem~\ref{t:sharp} shows that the bounds in Theorem~\ref{t:mainasp} are in a sense sharp, there are further questions that one could ask about the possible form of the Assouad spectrum of the limit set of a CIFS. In particular, suppose we fix a countable set such as $P = F_p \coloneqq \{ \, i^{-p} : i \in \N \, \}$ where $p \in (0,\infty)$, and fix $h \in (\ubd P, \qad P)$. 
If we consider only those CIFSs whose set of fixed points is precisely the set $P$, and vary the contraction ratios in such a way that the Hausdorff dimension of the limit set $F$ stays constant at $h$, then the bounds from Theorem~\ref{t:mainasp} will stay the same, but we expect it would be a delicate problem to give a precise description of how $\asp F$ can vary between those bounds. 

Throughout this paper, as in~\cite{Mauldin1996,Mauldin1999}, we always assume at least the Open Set Condition (OSC) (the separation condition from Definition~\ref{d:cifs}). 
In Section~\ref{s:asd}, we additionally assume that ${\overline{S_i}(V) \cap \overline{S_j}(V) = \varnothing}$ for all distinct $i,j \in I$. This is satisfied in many natural settings. 
For example, it holds for the family of CIFSs in Lemma~\ref{l:existcifstechnical} with $t > p+1$. 
Nonetheless, it is natural to ask whether the assumption is really needed, or whether the OSC suffices. 
The lower bound $\asd F \geq \max\{h,\asd P\}$ is immediate from Lemma~\ref{l:samewithinlevelasd} even without the assumption, but it is not obvious to us how to prove the other inequality without the assumption. 
\begin{ques}
In the statement of Theorem~\ref{t:asd}, can the assumption that ${\overline{S_i}(V) \cap \overline{S_j}(V) = \varnothing}$ for all distinct $i,j \in I$ be removed? 
\end{ques}
For attractors of certain finite IFSs, one can say something about the Assouad dimension even if the OSC is not assumed. Indeed, Fraser et al.~\cite{Fraser2015assouad} prove a dichotomy for the Assouad dimension of self-similar sets on the line: if a condition called the Weak Separation Property (WSP) holds then Hausdorff and Assouad dimension coincide, but if the WSP fails then the Assouad dimension is $1$. This result has a natural generalisation to self-conformal sets on the line~\cite{Angelevska2020conformal}, and the Assouad dimension of overlapping self-similar sets in higher dimensions has been studied in~\cite{Garcia2020assouad}. 
Ngai and Tong~\cite{Ngai2016} and Chu and Ngai~\cite{Chu2020} study the Hausdorff, box and packing dimensions of the limit sets of IIFSs with overlaps that do not satisfy the OSC but do satisfy suitable extensions of the WSP. It is therefore natural to ask (though we will not pursue this) what can be said about the Assouad type dimensions of the limit sets of infinite iterated function systems with overlaps that do not satisfy the OSC but perhaps satisfy weaker separation conditions such as those considered in~\cite{Ngai2016}.

Finally, as described in~\cite{Fraser2020book}, there are notions of dimension which are dual to the Assouad type dimensions, and describe the `thinnest' part of the set in question. 
The \emph{lower dimension} of a bounded set $F \subset \R^d$ with more than one point is defined by 
\begin{equation*}
      \dim_\mathrm{L} F = \sup\{ \,
      \lambda : \exists \, C>0\mbox{ such that } \forall x \in F, \forall \, 0<r<R\leq |F|, N_r(B(x,R)\cap F) \geq C(R/r)^\lambda \, \}. 
    \end{equation*}
    The \emph{lower spectrum} at $\theta \in (0,1)$ is 
    \begin{equation*}
      \dim^{\theta}_{\mathrm{L}} F = \sup\{ \, 
      \lambda : \exists \, C>0\mbox{ such that } \forall \, x \in F, \forall \, 0<R\leq |F|,  N_{R^{1/\theta}}(B(x,R)\cap F) \geq CR^{\lambda(1-1/\theta)} \, \}. 
    \end{equation*}
    These dimensions satisfy $\dim_{\mathrm L} F \leq \dim^{\theta}_{\mathrm{L}} F \leq \underline{\dim}_{\mathrm B} F$ for all $\theta$. 
    For infinitely generated self-conformal sets, these share some properties with the Assouad type dimensions. For instance, although there are sets for which the lower spectrum is not monotonic in $\theta$ (see \cite[Section~8]{Fraser2018-2}), for the limit set of a CIFS the function $\theta \mapsto \dim^{\theta}_{\mathrm{L}} F$ \emph{is} monotonic by a very similar argument to the proof of Lemma~\ref{l:monotoniclemma}. 
It would be possible to calculate formulae for the lower type dimensions of the sets in Lemma~\ref{l:existcifstechnical} using a direct covering argument and ideas from the proof of Theorem~\ref{t:sharp}, but we will not do this because this rather long calculation would not illuminate new phenomena within the scope of this paper, whose main focus is on the Assouad spectrum. 
One notable feature of the lower type dimensions in the context of infinite IFSs is described by Proposition~\ref{p:lowerzero}. 
\begin{prop}\label{p:lowerzero}
There exists a CIFS consisting of similarity maps whose limit set $F$ satisfies 
\[ \dim_{\mathrm L} F = \dim_{\mathrm L}^{\theta} F = 0 \qquad \mbox{ for all } \theta \in (0,1). \]\end{prop}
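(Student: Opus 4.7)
The plan is to construct an explicit infinite CIFS of similarities whose limit set contains, at arbitrarily small scales, balls intersecting $F$ in only a single tiny scaled copy of $F$. If that copy can be covered by one ball at the smaller scale, then $N_{R^{1/\theta}}(B(x,R) \cap F) = 1$ while the lower bound $C R^{\lambda(1-1/\theta)}$ blows up as $R \to 0$, forcing the lower spectrum to zero.

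Concretely, I would work on $X = [0,1]$ with the similarity CIFS $S_i(x) = c_i x + 2^{-i}$ for $i \in \N$, choosing contraction ratios $c_i$ that decay much faster than the gaps $t_{i-1} - t_i \approx 2^{-i}$ between consecutive fixed points --- for instance $c_i = 2^{-2^i}$. The images $S_i([0,1]) = [2^{-i}, 2^{-i} + 2^{-2^i}]$ are then pairwise disjoint subintervals of $[0,1]$, so all the axioms of Definition~\ref{d:cifs} hold trivially. Write $x_i := \fix(S_i) \in F_i \subset F$. Now fix $\theta \in (0,1)$ and $\lambda > 0$. The core step is to locate, for each large $i$, a single scale $R_i := c_i^{\alpha}$ with some $\alpha \in (0,\theta)$ at which the ball $B(x_i, R_i)$ is sparse. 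One must verify three compatibility conditions: (a) $R_i \geq c_i = |F_i|$, so that $F_i \subset B(x_i,R_i)$; (b) $R_i$ is much smaller than the gap $\sim 2^{-i}$ to neighbouring fixed points, so that $B(x_i, R_i) \cap F = F_i$; and (c) $R_i^{1/\theta} \geq c_i$, so that one ball of radius $R_i^{1/\theta}$ covers $F_i$. Because $c_i$ decays superexponentially while the spacings $2^{-i}$ decay only exponentially, all three conditions are compatible for any $\alpha \in (0,\theta)$ once $i$ is large.

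The required lower bound $N_{R_i^{1/\theta}}(B(x_i, R_i) \cap F) \geq C R_i^{\lambda(1-1/\theta)}$ then reduces to $1 \geq C R_i^{\lambda(1-1/\theta)}$; since $R_i \to 0$ and the exponent $\lambda(1-1/\theta)$ is negative, this fails for every $C > 0$. Hence $\dim_{\mathrm L}^{\theta} F \leq \lambda$, and as $\theta \in (0,1)$ and $\lambda > 0$ were arbitrary, $\dim_{\mathrm L}^{\theta} F = 0$ for all $\theta$. Finally, $\dim_{\mathrm L} F \leq \dim_{\mathrm L}^{\theta} F = 0$ because the defining condition for $\dim_{\mathrm L}$ (over all pairs $0 < r < R$) is strictly more restrictive than the one for the lower spectrum (restricted to $r = R^{1/\theta}$). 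The main obstacle is the calibration of decay rates in the previous step: the contractions must be made to shrink much faster than the spacing of their fixed points so that a single scale $R_i$ simultaneously witnesses the three geometric properties (a)--(c). Once that mismatch is in place, the estimate itself is essentially immediate, which matches the heuristic that the lower spectrum only detects the most dissipative part of the set.
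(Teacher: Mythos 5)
Your proposal is correct and follows essentially the same route as the paper's proof: both construct an explicit similarity CIFS on $[0,1]$ in which the contraction ratios decay much faster than the spacing of the translates (the paper uses $S_i(x)=2^{-i}x+i^{-1}$, you use $S_i(x)=2^{-2^i}x+2^{-i}$), so that at a suitable sequence of scale pairs $(R_i,R_i^{1/\theta})$ the ball $B(x_i,R_i)$ meets $F$ only in the single cylinder $F_i$, which is covered by one ball of radius $R_i^{1/\theta}$, forcing the lower spectrum (and hence the lower dimension) to vanish. The only difference is the specific calibration of the decay rates, which is immaterial.
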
 
\begin{proof}
For $i \geq 2$, let $S_i(x) \coloneqq 2^{-i} x + i^{-1}$. This clearly forms a CIFS on $[0,1]$. But for all $\theta \in (0,1)$, for all $i$ sufficiently large (depending on $\theta$), $F \cap (i^{-1} - 2^{-i \theta}/2, i^{-1} + 2^{-i \theta}/2) = S_i(F)$, so $N_{2^{-i}} (F \cap (i^{-1} - 2^{-i \theta}/2, i^{-1} + 2^{-i \theta}/2)) = 1$. 
Therefore $\dim^{\theta}_{\mathrm{L}} F = 0$, and so also $\dim_{\mathrm L} F = 0$. 
\end{proof}
This result is in stark contrast to the situation for finite IFSs. Indeed, limit sets of finite self-conformal IFSs with the OSC are known to be Ahlfors--David regular (see~\cite[Corollary~6.4.4]{Fraser2020book}), so all the dimensions considered in this paper coincide for such sets. Moreover, \cite[Theorem~6.3.1]{Fraser2020book} implies that if $F$ is the attractor of an arbitrary finite IFS of consisting bi-Lipschitz contractions, and $F$ is not a singleton, then $\dim_{\mathrm L}^{\theta} F > 0$ for some $\theta \in (0,1)$.

\section*{Acknowledgements}
AB thanks Mariusz Urbański for a helpful conversation. We thank Alex Rutar and two anonymous referees for helpful comments on versions of this paper. Both authors were financially supported by a Leverhulme Trust Research Project Grant (RPG-2019-034). JMF was also supported by an EPSRC Standard Grant (EP/R015104/1) and an RSE Sabbatical Research Grant (70249). 
Most of this work was completed while AB was JMF's PhD student at the University of St Andrews, but AB was also supported by an EPSRC New Investigators Award
(EP/W003880/1) while a postdoc at Loughborough University.

\section*{References}
\printbibliography[heading=none]%

\end{document}